\newtheorem{theorem}{Theorem}
\newtheorem{lemma}[theorem]{Lemma}
\newtheorem{proposition}[theorem]{Proposition}
\newtheorem{definition}[theorem]{Definition}
\newtheorem{Ex}[theorem]{Example}
\newtheorem{remark}[theorem]{Remark}
\newtheorem*{theorem*}{Theorem}
\newtheorem{conj}{Conjecture}
\newcommand{\ind}{{\rm ind \hspace{.1cm}}}
\newcommand{\rn}[1]{{\color{red} #1}}
\newcommand\addvmargin[1]{
  \node[fit=(current bounding box),inner ysep=#1,inner xsep=0]{};}
\begin{document}

\title{Contact Lie poset algebras of types B, C, and D}

\author[*]{Nicholas Mayers}
\author[**]{Nicholas Russoniello}

\affil[*]{Department of Mathematics, North Carolina State University, Raleigh, NC, 27605}
\affil[**]{Department of Mathematics, College of William \& Mary, Williamsburg, VA, 23187}

\maketitle

\bigskip
\begin{abstract} 
\noindent
We extend a recently established combinatorial index formula applying to Lie poset algebras of types B, C, and D. Then, using the extended index formula, we determine a characterization of contact Lie poset algebras of types B, C, and D corresponding to posets of height one in terms of an associated graph.
\end{abstract}


\section{Introduction}


This article is a sequel to the articles \textit{Contact Lie poset algebras} (see \textbf{\cite{ContactLiePoset}}, Electron. J. Comb., 2022) and \textit{The index and spectrum of Lie poset algebras of types B, C, and D} (see \textbf{\cite{BCD}}, Electron. J. Comb., 2021). In \textbf{\cite{ContactLiePoset}}, the authors provide a complete characterization of certain contact subalgebras of $\mathfrak{sl}(n).$ The characterized contact Lie algebras were members of the family of ``type-A Lie poset algebras." In \textbf{\cite{BCD}}, the authors, in particular, extend the definition of ``Lie poset algebra" to the other classical types and construct combinatorial ``index" formulas for such algebras. Here, we initiate an investigation into contact Lie poset algebras of types B, C, and D. To establish our main results, it is necessary to extend some key results of \textbf{\cite{BCD}} concerning the index of such algebras.


Briefly, recall that a $(2k+1)-$dimensional Lie algebra is \textit{contact} if it admits a linear one-form $\varphi$ satisfying $\varphi\wedge(d\varphi)^k\neq 0.$ Such a $\varphi$ is called a (left-invariant) \textit{contact form} and generates a volume form on the algebra's underlying Lie group. The problem of characterizing contact Lie algebras has its roots in the work of Boothby and Wang (\textbf{\cite{BW}}, 1958) and has garnered significant recent attention (see \textbf{\cite{Alvarez,Diatta,GR,Khakimdjanov,RS,InvCon}}, cf. \textbf{\cite{contacttoral}}). Here, we are concerned with identifying contact Lie algebras among Lie poset subalgebras of $\mathfrak{sp}(2n)$ and $\mathfrak{so}(n).$


The authors of \textbf{\cite{BCD}} -- following the suggestion put forth by Coll and Gerstenhaber in (\textbf{\cite{CG}}, 2016) -- define a Lie poset subalgebra of a classical simple Lie algebra $\mathfrak{g}$ as any subalgebra $\mathfrak{p}$ of $\mathfrak{g}$ satisfying $\mathfrak{h}\subset\mathfrak{p}\subset\mathfrak{b},$ where $\mathfrak{h}$ is a Cartan subalgebra of $\mathfrak{g},$ and $\mathfrak{b}$ is a Borel subalgebra of $\mathfrak{g}$ corresponding to $\mathfrak{h}.$ From this definition, it follows that, up to conjugation, each Lie poset algebra can be reckoned as a Lie algebra consisting of upper-triangular matrices whose potentially nonzero entries correspond to relations in an associated poset. In the cases where $\mathfrak{g}$ is $\mathfrak{so}(2n+1),$ $\mathfrak{sp}(2n),$ or $\mathfrak{so}(2n),$ the posets associated with a Lie poset algebra $\mathfrak{p}\subset\mathfrak{g}$ are called type-B, C, or D posets, respectively (see Definition~\ref{def:BCDposet} below).

Now, recall that in \textbf{\cite{ContactLiePoset}}, the method by which contact Lie poset algebras were identified relied heavily upon a Lie-algebraic invariant called the \textit{index}, which is defined as $$\ind\mathfrak{g}=\min_{\varphi\in\mathfrak{g}^*}\dim(\ker(d\varphi)).$$ Each contact Lie algebra necessarily has index one, and this fact is used in the prequel to identify candidate contact algebras among the family of type-A Lie poset algebras, which are then subsequently shown to be contact.
In order to initiate a similar study of contact type-B, C, and D Lie poset algebras, we leverage a main result of \textbf{\cite{BCD}}, which is a combinatorial index formula that applies to a restricted class of such algebras associated with posets of ``height one," i.e., whose chains have cardinality at most two. The first main goal of this article is to extend the combinatorial index formula of \textbf{\cite{BCD}} to all type-B, C, and D Lie poset algebras associated with posets of height one (see Section~\ref{sec:indf}). Upon achieving this goal, we then characterize contact Lie poset algebras of types B, C, and D associated with height-one posets.


The remainder of this paper is organized as follows. In Section~\ref{sec:prelim} we cover the necessary preliminaries from the theory of posets, including definitions and known results concerning posets of types B, C, and D. In Section~\ref{sec:indf}, we extend the index formula of \textbf{\cite{BCD}} as well as the characterization of Frobenius algebras so that they apply to all type-B, C, and D Lie poset algebras whose associated posets have height one. Following this, in Section~\ref{sec:con}, we determine a characterization of contact, type-B, C, and D Lie poset algebras whose associated posets have height one. Finally, in Section~\ref{sec:ep}, we discuss directions for future research.

\section{Preliminaries}\label{sec:prelim}

In this section, we give the necessary preliminaries from the theory of posets.

Recall that a \textit{finite poset} $(\mathcal{P}, \preceq_{\mathcal{P}})$ consists of a finite set $\mathcal{P}\subset\mathbb{Z}$ together with a binary relation $\preceq_{\mathcal{P}}$ which is reflexive, anti-symmetric, and transitive. We further assume that if $x\preceq_{\mathcal{P}}y$ for $x,y\in\mathcal{P}$, then $x\le y$, where $\le$ denotes the natural ordering on $\mathbb{Z}$. When no confusion will arise, we simply denote a poset $(\mathcal{P}, \preceq_{\mathcal{P}})$ by $\mathcal{P}$, and $\preceq_{\mathcal{P}}$ by $\preceq$. 

Let $x,y\in\mathcal{P}$. If $x\preceq y$ and $x\neq y$, then we call $x\preceq y$ a \textit{strict relation} and write $x\prec y$. Recall that if $x\prec y$ and there exists no $z\in \mathcal{P}$ satisfying $x\prec z\prec y$, then $y$ \textit{covers} $x$ and 
$x\prec y$ is a \textit{covering relation}.  Using this language, the \textit{Hasse diagram} of a poset $\mathcal{P}$ can be reckoned as the graph whose vertices correspond to elements of $\mathcal{P}$ and whose edges correspond to covering relations.

\begin{Ex}
Consider the poset $\mathcal{P}=\{1,2,3,4\}$ with $1\prec2\prec3,4$. The Hasse diagram of $\mathcal{P}$ is illustrated in Figure~\ref{fig:Hasse}.
\begin{figure}[H]
$$\begin{tikzpicture}
	\node (1) at (0, 0) [circle, draw = black, fill = black, inner sep = 0.5mm, label=left:{1}]{};
	\node (2) at (0, 1)[circle, draw = black, fill = black, inner sep = 0.5mm, label=left:{2}] {};
	\node (3) at (-0.5, 2) [circle, draw = black, fill = black, inner sep = 0.5mm, label=left:{3}] {};
	\node (4) at (0.5, 2) [circle, draw = black, fill = black, inner sep = 0.5mm, label=right:{4}] {};
    \draw (1)--(2);
    \draw (2)--(3);
    \draw (2)--(4);
    \addvmargin{1mm}
\end{tikzpicture}$$
\caption{Hasse diagram of $\mathcal{P}$}\label{fig:Hasse}
\end{figure}
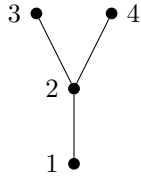
\end{Ex}

\noindent
For a subset $S\subset\mathcal{P}$, the \textit{induced subposet generated by $S$} is the poset $\mathcal{P}_S$ on $S,$ where $i\preceq_{\mathcal{P}_S}j$ if and only if $i,j\in S$ and $i\preceq_{\mathcal{P}}j$. A totally ordered subset $S\subset\mathcal{P}$ is called a \textit{chain}. The \textit{height} of $\mathcal{P}$ is one less than the largest cardinality of a chain in $\mathcal{P}$.

In this article we are interested in a restricted class of posets which generate subalgebras of the classical Lie algebras of types B, C, and D consisting of upper-triangular matrices. In particular, we are interested in the type-B, C, and D posets of \textbf{\cite{BCD}} which are defined as follows.

\begin{definition}\label{def:BCDposet}
A type-C poset is a poset $\mathcal{P}=\{-n,\hdots,-1,1,\hdots, n\}$ such that
\begin{enumerate}
	\item if $i\preceq_{\mathcal{P}}j$, then $i\le j$; and
	\item if $i\neq -j$, then $i\preceq_{\mathcal{P}}j$ if and only if $-j\preceq_{\mathcal{P}}-i$.
\end{enumerate}
A type-D poset is a poset $\mathcal{P}=\{-n,\hdots,-1,1,\hdots, n\}$ satisfying 1 and 2 above as well as 
\begin{enumerate}
    \setcounter{enumi}{2}
    \item $i$ does not cover $-i$, for $i\in \{1,\hdots, n\}$.
\end{enumerate}
A type-B poset is a poset $\mathcal{P}=\{-n,\hdots,-1,0,1,\hdots, n\}$ satisfying 1 through 3 above.  
\end{definition}

\begin{Ex}\label{ex:CHasse}
In Figure~\ref{fig:hasse}, we illustrate the Hasse diagram of the type-C \textup(and D\textup) poset $\mathcal{P}=\{-3,-2,-1,1,2,3\}$ with $-2\prec1,3$; $-3\prec 2$; and $-1\prec 2$. Note that adding 0 to $\mathcal{P}$ and a vertex labeled 0 to the Hasse diagram of Figure~\ref{fig:hasse} results in a type-B poset and its corresponding Hasse diagram.
\begin{figure}[H]
$$\begin{tikzpicture}[scale = 0.65]
	\node (1) at (0, 0) [circle, draw = black, fill=black, inner sep = 0.5mm, label=below:{-3}] {};
	\node (2) at (1,0) [circle, draw = black, fill=black,  inner sep = 0.5mm, label=below:{-2}] {};
	\node (3) at (2, 0) [circle, draw = black, fill=black, inner sep = 0.5mm, label=below:{-1}] {};
    \node (4) at (0, 1) [circle, draw = black, fill=black, inner sep = 0.5mm, label=above:{3}] {};
	\node (5) at (1,1) [circle, draw = black, fill=black,  inner sep = 0.5mm, label=above:{2}] {};
	\node (6) at (2, 1) [circle, draw = black, fill=black, inner sep = 0.5mm, label=above:{1}] {};
    \draw (1)--(5);
    \draw (4)--(2);
    \draw (5)--(3);
    \draw (2)--(6);
\end{tikzpicture}$$
\caption{Hasse diagram of a type-C poset}\label{fig:hasse}
\end{figure}
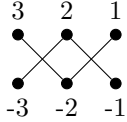
\end{Ex}

Given a type-B, C, or D poset $\mathcal{P}$, let $\mathcal{P}^+=\mathcal{P}_{\mathcal{P}\cap\mathbb{Z}_{>0}}$ and $\mathcal{P}^-=\mathcal{P}_{\mathcal{P}\cap\mathbb{Z}_{<0}}$; that is, $\mathcal{P}^+$ (resp., $\mathcal{P}^-)$ is the poset induced by the positive (resp., negative) elements of $\mathcal{P}$. Let $Rel_{\pm}(\mathcal{P})$ denote the set of relations $x\prec y$ such that $x\in\mathcal{P}^-$ and $y\in\mathcal{P}^+$. We call $\mathcal{P}$ \textit{separable} if $Rel_{\pm}(\mathcal{P})=\emptyset$, and \textit{non-separable} otherwise; note that if $\mathcal{P}$ is a type-B poset which is either separable or of height one, then $0$ cannot be related to any other elements of $\mathcal{P}$. For posets of types B, C, and D, we sometimes use a refined notion of height, saying that such a poset $\mathcal{P}$ is of \textit{height} $(i,j)$ if $\mathcal{P}^+$ (resp., $\mathcal{P})$ is of height $i$ (resp., $j$).

\begin{Ex}
If $\mathcal{P}$ is the poset of Example~\ref{ex:CHasse}, then $\mathcal{P}^+=\{1,2,3\}$ and $\mathcal{P}^-=\{-1,-2,-3\}$; both induced posets have no relations. Further, since $\mathcal{P}$ has chains of cardinality at most two, it follows that $\mathcal{P}$ is of height $(0,1)$.
\end{Ex}

In \textbf{\cite{BCD}} the authors use a condensed version of the Hasse diagram when working with type-B, C, and D posets of height-$(0,1)$, called the ``relation graph". Below we extend the notion of relation graph slightly so that it applies to type-B, C, and D posets of height one.

\begin{definition}\label{def:RG}
Given a type-B, C, or D poset $\mathcal{P}$ of height one, we define the relation graph $RG(\mathcal{P})$ as follows:
\begin{itemize}
    \item each pair of elements $-i,i\in\mathcal{P}$ is represented by a single vertex in $RG(\mathcal{P})$ labeled by $i\in\mathcal{P}^+$ \textup(omitting the vertex representing 0 in type B\textup);
    \item if $-i\prec j$ in $\mathcal{P}$, then there is an edge connecting vertex $i$ and vertex $j$ in $RG(\mathcal{P})$;
    \item if $-i\prec -j$ in $\mathcal{P}$, then there is a dashed edge connecting vertex $i$ and vertex $j$ in $RG(\mathcal{P})$.
\end{itemize}
We denote the vertex set and edge set of $RG(\mathcal{P})$ by $V(\mathcal{P})$ and $E(\mathcal{P})$, respectively. If $RG(\mathcal{P})$ is connected, then $\mathcal{P}$ is called connected.
\end{definition}

\begin{remark}
The extended relations graphs defined above are equivalent to ``signed digraphs", as defined by Reiner \textup(see \textup{\textbf{\cite{Reiner1,Reiner2}}}\textup), with the signs removed and edges with both signs becoming dashed.
\end{remark}

\begin{remark}
If $-i\prec i$ in $\mathcal{P}$, then vertex $i$ defines a self-loop in $RG(\mathcal{P})$. Note that $RG(\mathcal{P})$ can only contain self-loops if $\mathcal{P}$ is a type-C poset.
\end{remark}

\begin{remark}
As in \textup{\textbf{\cite{BCD}}}, we use the notion of connected given in Definition~\ref{def:RG} for type-B, C, and D posets in place of the standard notion in terms of connectedness of the Hasse diagram.
\end{remark}

\begin{Ex}
In Figure~\ref{fig:relationgr}, we illustrate the \textup(a\textup) Hasse diagram and \textup(b\textup) relation graph corresponding to the height-$(1,1)$, type-C poset $\mathcal{P}=\{-3,-2,-1,1,2,3\}$ with $-3\prec -2,1,3$ and $-1,2\prec -3$.
\begin{figure}[H]
$$\begin{tikzpicture}[scale = 0.7]
	\node (-3) at (0, 0) [circle, draw = black, fill=black, inner sep = 0.5mm, label=below:{-2}] {};
	\node (-2) at (1,-1) [circle, draw = black, fill=black,  inner sep = 0.5mm, label=below:{-3}] {};
	\node (-1) at (2, 0) [circle, draw = black, fill=black, inner sep = 0.5mm, label=below:{-1}] {};
    \node (3) at (0, 1) [circle, draw = black, fill=black, inner sep = 0.5mm, label=above:{2}] {};
	\node (2) at (1,2) [circle, draw = black, fill=black,  inner sep = 0.5mm, label=above:{3}] {};
	\node (1) at (2, 1) [circle, draw = black, fill=black, inner sep = 0.5mm, label=above:{1}] {};
	\node (7) at (1, -2.5) {(a)};
	\draw (-3)--(-2)--(1);
    \draw (-2)--(2);
    \draw (3)--(2)--(-1);
\end{tikzpicture}\quad\quad\quad\quad\begin{tikzpicture}[scale = 0.7]
	\node (1) at (0, 0.5) [circle, draw = black,fill=black, inner sep = 0.5mm, label=below:{2}] {};
	\node (2) at (1,0.5) [circle, draw = black,fill=black,  inner sep = 0.5mm, label=below:{3}] {};
	\node (3) at (2, 0.5) [circle, draw = black,fill=black, inner sep = 0.5mm, label=below:{1}] {};
	\node (7) at (1, -2.5) {(b)};
	\draw[dashed] (1)--(2);
    \draw (2)--(3);
	\draw (1,0.5) .. controls (0.5,1.5) and (1.5,1.5) .. (1,0.5);
\end{tikzpicture}$$
\caption{(a) Hasse diagram and (b) relation graph of type-C poset}\label{fig:relationgr}
\end{figure}
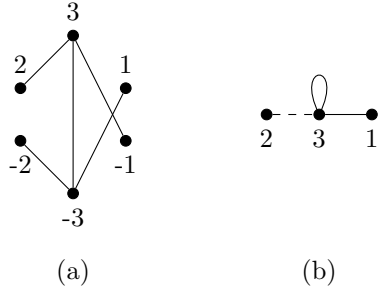
\end{Ex}

One obtains a subalgebra of the appropriate classical Lie algebra from a poset of type B, C, or D as described in the following theorem. As in the introduction, we let $E_{i,j}$ denote an appropriately sized square matrix containing a 1 in the $i,j$-entry and 0's elsewhere; the size of $E_{i,j}$ will be clear from context.

\begin{theorem}[\textbf{\cite{BCD}}]
Type-C \textup(resp., B or D\textup) posets $\mathcal{P}$ are in bijective correspondence with type-C \textup(resp., B or D\textup) Lie poset algebras $\mathfrak{p}$ as follows:
\begin{itemize}
    \item $|\mathcal{P}|=n$ if and only if $\mathfrak{p}\subset \mathfrak{sp}(n)$ \textup(resp., $\mathfrak{p}\subset \mathfrak{so}(n)$\textup);
    \item $-i,i\in \mathcal{P}$ if and only if $E_{-i,-i}-E_{i,i}\in \mathfrak{p}$;
    \item $-i\prec_{\mathcal{P}}-j$ and $j\prec_{\mathcal{P}}i$ if and only if $E_{-i,-j}-E_{j,i}\in\mathfrak{p}$;
    \item $-i\prec_{\mathcal{P}}j$ and $-j\prec_{\mathcal{P}}i$ if and only if $E_{-i,j}+E_{-j,i}\in\mathfrak{p}$ \textup(resp., $E_{-i,j}-E_{-j,i}\in\mathfrak{p}$\textup);
\end{itemize}
and only in type-C
\begin{itemize}
    \item $-i\prec_{\mathcal{P}}i$ if and only if $E_{-i,i}\in\mathfrak{p}$.
\end{itemize}
\end{theorem}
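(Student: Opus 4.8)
The plan is to fix an explicit matrix model for each of $\mathfrak{g}\in\{\mathfrak{sp}(2n),\mathfrak{so}(2n+1),\mathfrak{so}(2n)\}$ in which the Borel subalgebra $\mathfrak{b}$ really is the set of upper-triangular matrices lying in $\mathfrak{g}$, and then to read off the correspondence one root space at a time. Concretely, I would realize $\mathfrak{g}$ using a bilinear form whose Gram matrix $M$ is supported on the anti-diagonal, with rows and columns indexed by $\mathcal{P}=\{-n,\ldots,-1,1,\ldots,n\}$ (inserting $0$ for type B), so that $\mathfrak{g}=\{X:X^{\top}M+MX=0\}$. With this convention the diagonal matrices in $\mathfrak{g}$ are exactly $\mathrm{span}\{E_{-i,-i}-E_{i,i}\}$, which gives the Cartan $\mathfrak{h}$, the first membership bullet, and the reflexivity of $\preceq$; and $\mathfrak{b}=\mathfrak{h}\oplus\bigoplus_{\alpha>0}\mathfrak{g}_\alpha$ is the intersection of $\mathfrak{g}$ with the upper-triangular matrices.

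Next I would compute the positive root spaces explicitly. Since each $\mathfrak{g}_\alpha$ is one-dimensional, it suffices to solve $X^{\top}M+MX=0$ for $X$ supported on a single ``mirror pair'' of off-diagonal positions determined by $M$. This computation produces exactly the matrices in the statement: a relation among strictly negative indices pairs the position $(-i,-j)$ with $(j,i)$ and yields $E_{-i,-j}-E_{j,i}$; a relation crossing from negative to positive indices pairs $(-i,j)$ with $(-j,i)$ and yields $E_{-i,j}+E_{-j,i}$ in the symplectic case and $E_{-i,j}-E_{-j,i}$ in the orthogonal case, the sign being dictated by whether $M$ is skew or symmetric; and the long roots $2e_i$, which exist only for $\mathfrak{sp}(2n)$, give the extra vectors $E_{-i,i}$. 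For $\mathfrak{so}(2n)$ no long-root vector exists, which is the source of condition 3, while for $\mathfrak{so}(2n+1)$ the short roots $\pm e_i$ contribute additional vectors $E_{-i,0}-E_{0,i}$ attached to the coordinate $0$.

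With the root spaces identified, I would observe that any $\mathfrak{p}$ with $\mathfrak{h}\subset\mathfrak{p}\subset\mathfrak{b}$ is automatically $\mathrm{ad}(\mathfrak{h})$-stable, hence a direct sum of $\mathfrak{h}$ with a set $R$ of positive root spaces; declaring $i\preceq j$ to hold precisely when the corresponding root vector(s) lie in $\mathfrak{p}$ gives a well-defined relation. Condition 1 is immediate because root vectors in $\mathfrak{b}$ are upper-triangular, and condition 2 is forced by the mirror-pair shape of the generators, since each carries both $(-i,-j)$ and $(j,i)$ (resp. both $(-i,j)$ and $(-j,i)$), so one relation cannot be present without its partner; antisymmetry of $\preceq$ then follows from condition 1. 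The heart of the argument is transitivity: I would show by direct bracket computation that, for the listed generators, the bracket of two root vectors is a nonzero scalar multiple of a third listed root vector exactly when the two underlying relations compose — for instance $[E_{-i,-j}-E_{j,i},\,E_{-j,-k}-E_{k,j}]=E_{-i,-k}-E_{k,i}$ — and is zero otherwise. Closure of $\mathfrak{p}$ under the bracket is thus equivalent to closure of $R$ under composition, i.e. to transitivity of $\preceq$. Running the construction in reverse, starting from a poset and taking the span of $\mathfrak{h}$ with the prescribed root vectors, produces the inverse map, and the two assignments are visibly mutually inverse, which establishes the bijection.

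The main obstacle I anticipate is the bracket bookkeeping in the middle step: one must track the signs coming from the symmetric versus skew-symmetric form across all structural cases (two negative indices, a crossing pair, a type-C long root, a type-B short root through $0$) and verify both that genuine compositions land on nonzero multiples of the expected generator and that ``accidental'' brackets either vanish or are already forced into $\mathfrak{p}$ by transitivity. The special role of $E_{-i,i}$ in type C and of condition 3 in type D requires particular care, since a type-D relation $-i\prec i$ can only be realized through an intermediate element rather than through a single long-root vector, and one must confirm that this is consistent with bracket-closure.
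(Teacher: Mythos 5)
This theorem is quoted in the paper from \textbf{\cite{BCD}} without proof, so there is no internal argument to compare against; judged on its own terms, your proposal is correct and is essentially the argument underlying the cited result. The route you take --- realizing $\mathfrak{g}$ via an anti-diagonal form so that $\mathfrak{b}$ is exactly the upper-triangular part, decomposing any $\mathfrak{p}$ with $\mathfrak{h}\subset\mathfrak{p}\subset\mathfrak{b}$ into $\mathrm{ad}(\mathfrak{h})$-weight spaces, matching each root vector to a mirror-pair of relations (with the $j=0$ instance of the crossing bullet absorbing the type-B short roots), and identifying bracket closure with transitivity, the type-C long-root vectors $E_{-i,i}$ and their absence in types B and D accounting for condition 3 of Definition~\ref{def:BCDposet} --- is the standard and correct one, and your flagged subtlety (that in types B and D a relation $-i\prec i$ forced through an intermediate carries no root vector, yet neither breaks closure nor injectivity) is precisely the point that needs the care you indicate.
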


\begin{remark}
Note that as in the type-A case, type-C posets $\mathcal{P}$ determine the matrix form of the corresponding type-C Lie poset algebra by identifying which entries of a $|\mathcal{P}|\times|\mathcal{P}|$ matrix can be non-zero. In particular, the $i,j$-entry can be non-zero if and only if $i\preceq_{\mathcal{P}}j$. The same is almost true in types-B and D, except one ignores relations of the form $-i\prec_{\mathcal{P}} i$.
\end{remark}

\begin{Ex}\label{ex:typeBCD}
Let $\mathcal{P}$ be the poset of Example~\ref{ex:CHasse}. The matrix form encoded by $\mathcal{P}$ and defining the corresponding type-C \textup(and D\textup) Lie poset algebra is illustrated in Figure~\ref{fig:tBCD}, where $*$'s denote potential non-zero entries. 
\begin{figure}[H]
$$\kbordermatrix{
    & -3 & -2 & -1 & 1 & 2 & 3 \\
   -3 & * & 0 & 0 & 0 & * & 0  \\
   -2 & 0 & * & 0 & * & 0 & * \\
   -1 & 0 & 0 & * & 0 & * & 0 \\
   1 & 0 & 0 & 0 & * & 0 & 0 \\
   2 & 0 & 0 & 0 & 0 & * & 0 \\
   3 & 0 & 0 & 0 & 0 & 0 & * \\
  }$$
\caption{Matrix form for $\mathcal{P}=\{-3,-2,-1,1,2,3\}$ with $-2\prec1,3$; $-3\prec 2$; and $-1\prec 2$}\label{fig:tBCD}
\end{figure}
\end{Ex}

Given a type-C poset $\mathcal{P}$, we denote the corresponding type-C Lie poset algebra by $\mathfrak{g}_C(\mathcal{P})$; furthermore, we define the following basis for $\mathfrak{g}_C(\mathcal{P})$:
\begin{align}
\mathscr{B}_C(\mathcal{P})=\{E_{-i,-i}-E_{i,i}~|~-i,i\in\mathcal{P}\}&\cup\{E_{-j,-i}-E_{i,j}~|~-i,-j,i,j\in\mathcal{P},-j\prec -i,i\prec j\} \nonumber \\ 
&\cup\{E_{-i,j}+E_{-j,i}~|~-i,-j,i,j\in\mathcal{P},-j\prec i,-i\prec j\} \nonumber \\
&\cup\{E_{-i,i}~|~-i,i\in\mathcal{P},-i\prec i\}. \nonumber
\end{align}
Similarly, given a type-D \textup(resp., B\textup) poset $\mathcal{P}$ we denote the corresponding type-D \textup(resp., B\textup) Lie poset algebra by $\mathfrak{g}_D(\mathcal{P})$ \textup(resp., $\mathfrak{g}_B(\mathcal{P})$\textup) and define the basis $\mathscr{B}_D(\mathcal{P})$ \textup(resp., $\mathscr{B}_B(\mathcal{P})$\textup) as follows: \begin{align}
\mathscr{B}_D(\mathcal{P})=\{E_{-i,-i}-E_{i,i}~|~-i,i\in\mathcal{P}\}&\cup\{E_{-j,-i}-E_{i,j}~|~-i,-j,i,j\in\mathcal{P},-j\prec -i,i\prec j\} \nonumber \\
&\cup\{E_{-i,j}-E_{-j,i}~|~-i,-j,i,j\in\mathcal{P},-j\prec i,-i\prec j,j<i\}. \nonumber
\end{align}

\noindent
Ongoing, we set the following notational conventions.
\begin{itemize}
    \item $D_i=E_{-i,-i}-E_{i,i}$.
    \item $R^{\pm}_{i,j}=E_{-i,j}+E_{-j,i}$.
    \item $R_{i,j}=E_{-j,-i}-E_{i,j}$ for $i<j$.
    \item $x_{i,n}$ denotes the $i$th standard basis element in $\mathbb{C}^n$, i.e., $$x_{i,n}=[\underbrace{0~\cdots~0}_{i-1}~1~\underbrace{0~\hdots~0}_{n-i}].$$
\end{itemize}

\begin{remark}
Note that $R^{\pm}_{i,j}=R^{\pm}_{j,i}$.
\end{remark}

In \textbf{\cite{BCD}}, the authors establish the following relationships among Lie poset algebras of types-B, C, and D.

\begin{theorem}[Theorems~17 and 18, \textbf{\cite{BCD}}]\label{thm:onlyC}~
\begin{enumerate}
    \item[\textup{(a)}] If $\mathcal{P}$ is a type-D poset such that $-i\nprec i$ for all $i\in\mathcal{P}$, then $\mathfrak{g}_D(\mathcal{P})$ is isomorphic to $\mathfrak{g}_C(\mathcal{P})$.
    \item[\textup{(b)}] If $\mathcal{P}$ is a type-B poset for which 0 is not related to any other element of $\mathcal{P}$, $-i\nprec i$ for all $i\in\mathcal{P}$, and $\mathcal{P}_0=\mathcal{P}_{\mathcal{P}\backslash\{0\}}$, then $\mathfrak{g}_B(\mathcal{P})$ is isomorphic to $\mathfrak{g}_C(\mathcal{P}_0)$.
\end{enumerate}
\end{theorem}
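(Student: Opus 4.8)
The plan is to prove (a) by exhibiting an explicit Lie algebra isomorphism $\phi \colon \mathfrak{g}_C(\mathcal{P}) \to \mathfrak{g}_D(\mathcal{P})$ built on the bases $\mathscr{B}_C(\mathcal{P})$ and $\mathscr{B}_D(\mathcal{P})$, and then to deduce (b) from (a). First I would observe that, since $-i \nprec i$ for all $i$, neither basis contains an element of the form $E_{-i,i}$; consequently $\mathscr{B}_C(\mathcal{P})$ and $\mathscr{B}_D(\mathcal{P})$ have the same cardinality and their elements match naturally: the diagonal elements $D_i$ and the ``diagonal-block'' elements $R_{i,j} = E_{-j,-i} - E_{i,j}$ occur identically in both bases, while the only genuine difference is that each type-C element $R^{\pm}_{i,j} = E_{-i,j} + E_{-j,i}$ is replaced in type D by its antisymmetric analogue $S_{i,j} := E_{-i,j} - E_{-j,i}$. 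Accordingly, I would define $\phi$ to be the identity on the span $\mathfrak{a} = \langle D_i, R_{i,j} \rangle$ --- which in both algebras is the common copy of the type-A Lie poset algebra attached to $\mathcal{P}^+$ --- and to send $R^{\pm}_{i,j} \mapsto c_{i,j} S_{i,j}$ for scalars $c_{i,j}$ satisfying $c_{i,j} = -c_{j,i}$ (forced by $R^{\pm}_{i,j} = R^{\pm}_{j,i}$ while $S_{i,j} = -S_{j,i}$), to be determined.

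Next I would reduce the homomorphism condition to a single family of bracket identities. The $R^{\pm}_{i,j}$ (and likewise the $S_{i,j}$) lie in the ``upper-right block,'' so they commute among themselves; hence $\phi$ automatically respects every bracket internal to $\mathfrak{a}$ (where it is the identity) and every bracket among the $R^{\pm}$'s (all zero on both sides). A short calculation with matrix units gives $[D_k, R^{\pm}_{i,j}] = (\delta_{k,i} + \delta_{k,j}) R^{\pm}_{i,j}$, and the same relation with $S$ in place of $R^{\pm}$, so the Cartan brackets impose no condition on the $c_{i,j}$. Thus the whole content of (a) concentrates in the mixed brackets, where one computes $[R_{i,j}, R^{\pm}_{k,l}] = \delta_{i,k} R^{\pm}_{j,l} + \delta_{i,l} R^{\pm}_{j,k}$ in type C versus $[R_{i,j}, S_{k,l}] = \delta_{i,k} S_{j,l} - \delta_{i,l} S_{j,k}$ in type D. Comparing $\phi$ of the former against the bracket of the images forces exactly the linear constraints $c_{i,l} = c_{j,l}$ whenever $i \prec j$ and $-i \prec l$ (transitivity then guarantees $-j \prec l$, so the element $R^{\pm}_{j,l}$ on the right is genuinely present).

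The main obstacle is to show that this constraint system admits a consistent, nowhere-zero, antisymmetric solution $(c_{i,j})$. Here I would exploit the no-self-loop hypothesis. The key structural point is that each constraint equates $c_{i,l}$ and $c_{j,l}$ with a \emph{common} second index $l$ and with equal sign, so the only way antisymmetry could manufacture a contradiction $c_{a,b} = -c_{a,b}$ is through a closed chain of constraints whose second coordinate traverses an odd cycle. I would argue that realizing such a cycle together with the required relations $i \prec j$ forces, via condition~2 of Definition~\ref{def:BCDposet} and transitivity, a relation $-m \prec m$, contradicting $-m \nprec m$. (In miniature, $i \prec j$ together with $-i \prec j$ already yields $-j \prec -i \prec j$, hence $-j \prec j$; ruling out the degenerate global configurations in this spirit is the combinatorial heart of the proof.) Granting consistency, $\phi$ is a linear isomorphism intertwining all brackets, which proves (a).

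Finally, for (b) I would observe that when $0$ is unrelated to every other element, the $0$-indexed row and column of every matrix in $\mathfrak{g}_B(\mathcal{P})$ vanish, and $\mathcal{P}_0$ is a type-D poset with $-i \nprec i$; deleting this zero row and column identifies $\mathfrak{g}_B(\mathcal{P})$ with $\mathfrak{g}_D(\mathcal{P}_0)$ as matrix Lie algebras, and composing with the isomorphism of part (a) yields $\mathfrak{g}_B(\mathcal{P}) \cong \mathfrak{g}_D(\mathcal{P}_0) \cong \mathfrak{g}_C(\mathcal{P}_0)$.
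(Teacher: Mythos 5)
First, a caveat: the paper does not prove this statement at all---it is quoted verbatim from Theorems~17 and~18 of \cite{BCD}---so there is no internal argument to compare yours against; I can only assess your proposal on its own terms. Your reduction is correct as far as it goes: the bracket formulas $[D_k,R^{\pm}_{i,j}]=(\delta_{k,i}+\delta_{k,j})R^{\pm}_{i,j}$ and $[R_{i,j},R^{\pm}_{k,l}]=\delta_{i,k}R^{\pm}_{j,l}+\delta_{i,l}R^{\pm}_{j,k}$, their type-D analogues, and the deduction of (b) from (a) all check out, and with your ansatz (identity on $\mathfrak{a}=\langle D_i,R_{i,j}\rangle$, $R^{\pm}_{i,j}\mapsto c_{i,j}(E_{-i,j}-E_{-j,i})$) the homomorphism condition is indeed exactly the system $c_{i,m}=c_{j,m}$ for $i\prec j$ with $-i\prec m$, $-m \prec i$, together with $c_{i,j}=-c_{j,i}$. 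The genuine gap is your consistency claim: it is false that the hypothesis $-i\nprec i$ forces this system to have a nowhere-zero solution. Take $\mathcal{P}=\{\pm 1,\dots,\pm 6\}$ with $\mathcal{P}^+$-relations $1\prec 5$, $1\prec 6$, $2\prec 4$, $2\prec 6$, $3\prec 4$, $3\prec 5$ (and their mirrors $-5\prec -1$, etc.), and with negative-positive relations $-i\prec j$, $-j\prec i$ precisely for the pairs $\{1,4\},\{2,5\},\{3,6\},\{4,5\},\{5,6\},\{4,6\}$. This list of relations is already transitively closed (e.g., $-4\prec 1\prec 5$ gives $-4\prec 5$, which is listed), antisymmetry holds since every relation increases the integer label, and $-i\nprec i$ for all $i$ (e.g., the full set of successors of $-4$ is $\{-2,-3,1,5,6\}$), so $\mathcal{P}$ is a legitimate type-C and type-D poset satisfying the hypothesis of (a). Your constraints give $c_{5,4}=c_{1,4}=c_{6,4}$ (from $1\prec 5,6$ and the pair $\{1,4\}$), $c_{4,5}=c_{2,5}=c_{6,5}$ (from $2\prec 4,6$ and $\{2,5\}$), and $c_{5,6}=c_{3,6}=c_{4,6}$ (from $3\prec 4,5$ and $\{3,6\}$). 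Combining with antisymmetry, $c_{4,5}=c_{6,5}=-c_{5,6}=-c_{4,6}=c_{6,4}=c_{5,4}=-c_{4,5}$, forcing $c_{4,5}=0$. Hence no isomorphism of your proposed shape exists for this $\mathcal{P}$, even though the theorem holds for it. Note that the contradiction chain does traverse the odd triangle on $\{4,5,6\}$, exactly the configuration you needed to exclude, yet no relation $-m\prec m$ is produced: your ``miniature'' argument only rules out a comparability $i\prec j$ coinciding with a pair $-i\prec j$, $-j \prec i$, not a triangle of such pairs whose vertices are compared to \emph{outside} elements.

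The repair is to loosen the ansatz: allow $\phi(R_{i,j})=a_{i,j}R_{i,j}$ with scalars $a_{i,j}$ subject to the cocycle condition $a_{i,k}=a_{i,j}a_{j,k}$ coming from $[R_{i,j},R_{j,k}]=-R_{i,k}$; the mixed-bracket constraints then become $c_{j,m}=a_{i,j}\,c_{i,m}$. For the poset above this enlarged system is solvable (take $a_{2,4}=a_{3,4}=a_{3,5}=-1$ and every other parameter equal to $1$), which is consistent with the theorem being true. But proving solvability of the enlarged sign system for an \emph{arbitrary} type-D poset with $-i\nprec i$ is precisely the substantive combinatorial content of the theorem, and it is the step your proposal does not supply; the transitivity argument you sketch cannot do this job, since, as the example shows, the obstruction it targets need not exist when the system is inconsistent under your rigid ansatz.
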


\noindent
With respect to index, the authors of \textbf{\cite{BCD}} established the following index formula for type-B, C, and D Lie poset algebras associated with posets of height $(0,1)$.

\begin{theorem}\label{thm:index01}
Let $\mathcal{P}$ be a height-$(0,1)$ poset of type-B, C, or D and $\mathfrak{g}$ be the corresponding type-B, C, or D Lie poset algebra, respectively. Then $$\ind\mathfrak{g}=|E(\mathcal{P})|-|V(\mathcal{P})|+2\eta(\mathcal{P}),$$ where $\eta(\mathcal{P})$ denotes the number of connected components of $RG(\mathcal{P})$ containing no odd cycles.
\end{theorem}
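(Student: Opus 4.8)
The plan is to exploit the rigidity forced by the height-$(0,1)$ hypothesis in order to reduce the computation of $\ind\mathfrak{g}$ to a rank computation for the unsigned incidence matrix of $RG(\mathcal{P})$. First I would reduce to type C: in a height-$(0,1)$ poset one cannot have $-i\prec i$ in types B or D (a covering such relation is forbidden by condition~3 of Definition~\ref{def:BCDposet}, while a non-covering one would produce a chain of cardinality three), and $0$ is unrelated to everything, so Theorem~\ref{thm:onlyC} gives $\mathfrak{g}_B(\mathcal{P})\cong\mathfrak{g}_C(\mathcal{P}_0)$ and $\mathfrak{g}_D(\mathcal{P})\cong\mathfrak{g}_C(\mathcal{P})$, with $RG(\mathcal{P})$ unchanged. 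Next I record the basis. Since $\mathcal{P}^{+}$ is an antichain there are no relations $i\prec j$ with $i,j>0$, and condition~2 then forbids relations $-i\prec -j$ (these would force $j\prec i$); hence $RG(\mathcal{P})$ has no dashed edges and $\mathscr{B}_C(\mathcal{P})$ consists only of the $D_i$ (one per vertex), the $R^{\pm}_{i,j}$ (one per non-loop edge $\{i,j\}$), and the $E_{-i,i}$ (one per self-loop). In particular $\dim\mathfrak{g}=|V(\mathcal{P})|+|E(\mathcal{P})|$.

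The key structural observation is a short bracket computation exhibiting $\mathfrak{g}$ as an abelian Lie algebra acting diagonally on an abelian ideal. Writing $\mathfrak{t}=\mathrm{span}\{D_i\}$ and $\mathfrak{n}=\mathrm{span}\{R^{\pm}_{i,j},E_{-i,i}\}$, one checks that $\mathfrak{t}$ is abelian, that $\mathfrak{n}$ is an abelian ideal (any product of two root vectors vanishes, since they all lie in the lower-left $E_{-*,*}$ block), and that $[D_k,R^{\pm}_{i,j}]=(\delta_{ki}+\delta_{kj})R^{\pm}_{i,j}$ and $[D_k,E_{-i,i}]=2\delta_{ki}E_{-i,i}$. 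Thus each root vector $X_e$ attached to an edge $e$ is a $\mathfrak{t}$-weight vector; writing $[D_k,X_e]=w_e(D_k)X_e$, the weight $w_e$ records the endpoints of $e$.

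I would then write down the Kirillov form. For $\varphi\in\mathfrak{g}^{*}$ put $b_e=\varphi(X_e)$. Because $[\mathfrak{t},\mathfrak{t}]=[\mathfrak{n},\mathfrak{n}]=0$, the only nonzero entries of the matrix $B_\varphi=(\varphi([\,\cdot\,,\,\cdot\,]))$ in the basis above are $B_\varphi(D_k,X_e)=w_e(D_k)\,b_e$, so $B_\varphi$ has the block form $\left(\begin{smallmatrix}0&C\\-C^{T}&0\end{smallmatrix}\right)$, where $C$ is the $|V(\mathcal{P})|\times|E(\mathcal{P})|$ matrix whose column indexed by $e$ equals $b_e$ times the incidence vector of $e$ (namely $\mathbf{e}_i+\mathbf{e}_j$ for the edge $\{i,j\}$, and $2\mathbf{e}_i$ for the loop at $i$). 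Hence $\mathrm{rank}(B_\varphi)=2\,\mathrm{rank}(C)$; since scaling columns by nonzero scalars preserves rank, taking all $b_e\neq 0$ yields $\max_{\varphi}\mathrm{rank}(B_\varphi)=2\,\mathrm{rank}(N)$, where $N$ is the unsigned incidence matrix of $RG(\mathcal{P})$. Therefore $\ind\mathfrak{g}=\dim\mathfrak{g}-2\,\mathrm{rank}(N)=|V(\mathcal{P})|+|E(\mathcal{P})|-2\,\mathrm{rank}(N)$.

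It remains to prove the graph-theoretic identity $\mathrm{rank}(N)=|V(\mathcal{P})|-\eta(\mathcal{P})$, which I would obtain by computing the left null space of $N$. A vertex weighting $(c_i)$ annihilates the columns of $N$ exactly when $c_i+c_j=0$ for every edge $\{i,j\}$ and $c_i=0$ at every looped vertex. On a connected component this leaves one free scalar when the component is bipartite with no loop (an odd closed walk would force $c=-c$), and forces $(c_i)=0$ otherwise; so the null space has dimension equal to the number of components containing no odd cycle, a self-loop being regarded as an odd cycle. This dimension is precisely $\eta(\mathcal{P})$, giving $\mathrm{rank}(N)=|V(\mathcal{P})|-\eta(\mathcal{P})$ and hence $\ind\mathfrak{g}=|E(\mathcal{P})|-|V(\mathcal{P})|+2\eta(\mathcal{P})$. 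The main obstacle is bookkeeping: verifying that $\mathfrak{n}$ is genuinely abelian under the height hypothesis and that self-loops (possible only in type C) are correctly counted as odd cycles; once these are in place, the index reduces transparently to the classical rank formula for the unoriented incidence matrix over $\mathbb{C}$.
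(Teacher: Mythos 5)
Your proof is correct and follows essentially the same route the paper (and its prequel \textbf{\cite{BCD}}, from which Theorem~\ref{thm:index01} is quoted) uses: reduce to type C, exhibit the commutator matrix in the block form $\left(\begin{smallmatrix}0&C\\-C^{T}&0\end{smallmatrix}\right)$ so that $\ind\mathfrak{g}=\dim\mathfrak{g}-2\,\mathrm{rank}(M(\mathcal{P}))$ as in equation~(\ref{eqn:index}), note that for a height-$(0,1)$ poset $M(\mathcal{P})$ is (up to sign) the unsigned incidence matrix of $RG(\mathcal{P})$, and invoke the classical rank formula $\mathrm{rank}=|V|-\eta$ with self-loops counted as odd cycles. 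The bracket computations and the null-space argument are all correct, so nothing further is needed.
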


\noindent
Moreover, combining Theorem 30 of \textbf{\cite{BCD}} with Theorems 2 and 4 of \textbf{\cite{seriesA}} we obtain the following.

\begin{theorem}\label{thm:indsep}
    Let $\mathcal{P}$ be a separable, type-B, C, of D poset of height one and $\mathfrak{g}$ be the corresponding type-B, C, or D Lie poset algebra, respectively. Then $$\ind\mathfrak{g}=|E(\mathcal{P})|-|V(\mathcal{P})|+2.$$
\end{theorem}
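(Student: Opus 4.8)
The plan is to reduce first to type C, then to use separability to realize $\mathfrak{g}$ as a type-A Lie poset algebra, and finally to invoke the known type-A index formula, translating its output into the language of $RG(\mathcal{P})$.

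First I would eliminate types B and D. Separability forces $Rel_{\pm}(\mathcal{P})=\emptyset$, so no relation $-i\prec i$ can occur; hence $\mathfrak{g}$ has no basis element $E_{-i,i}$ and $RG(\mathcal{P})$ has no self-loops. In type B the element $0$ is moreover unrelated to everything. Theorem~\ref{thm:onlyC} then gives $\mathfrak{g}_D(\mathcal{P})\cong\mathfrak{g}_C(\mathcal{P})$ and $\mathfrak{g}_B(\mathcal{P})\cong\mathfrak{g}_C(\mathcal{P}_0)$, leaving the relation graph unchanged. Since the index is an isomorphism invariant, it suffices to treat type C.

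Next I would exploit separability to split the type-C algebra. Because $Rel_{\pm}(\mathcal{P})=\emptyset$, the basis $\mathscr{B}_C(\mathcal{P})$ consists only of the toral elements $D_i$ and the elements $R_{i,j}$ arising from relations within $\mathcal{P}^+$ (equivalently within $\mathcal{P}^-$); there are no $R^{\pm}_{i,j}$ and no $E_{-i,i}$. In matrix form $\mathfrak{g}_C(\mathcal{P})$ is thus block diagonal, with the negative block the negative transpose of the positive block, and the assignment $D_i\mapsto E_{i,i}$, $R_{i,j}\mapsto E_{j,i}$ realizes $\mathfrak{g}_C(\mathcal{P})$ as the $\mathfrak{gl}$-type-A Lie poset algebra $\mathfrak{g}_A(\mathcal{P}^+)$ attached to the positive subposet; compatibility follows from $[D_i,R_{i,j}]=-R_{i,j}$ and $[D_j,R_{i,j}]=R_{i,j}$, together with the fact that the height-one hypothesis makes the $R_{i,j}$ span an abelian subalgebra. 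This identification is the content of Theorem 30 of \textbf{\cite{BCD}}.

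Finally I would apply the type-A index formula (Theorems 2 and 4 of \textbf{\cite{seriesA}}) to $\mathfrak{g}_A(\mathcal{P}^+)$ and translate. Under the dictionary above the vertices of $RG(\mathcal{P})$ are the elements of $\mathcal{P}^+$, so $|V(\mathcal{P})|=|\mathcal{P}^+|$, while the dashed edges of $RG(\mathcal{P})$ correspond bijectively to the strict relations of $\mathcal{P}^+$, so $|E(\mathcal{P})|$ counts those relations. Because $\mathcal{P}$ has height one, $\mathcal{P}^+$ has height at most one, whence its comparability graph---and thus $RG(\mathcal{P})$, all of whose edges are now dashed---is bipartite and carries no odd cycle; this is exactly where the present situation departs from the solid-edge, odd-cycle-prone setting of Theorem~\ref{thm:index01}. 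The type-A formula then collapses to $|E(\mathcal{P})|-|V(\mathcal{P})|+2$. The main obstacle is precisely this collapse: one must verify that the combinatorial correction term equals exactly $2$. The reduction in fact yields $|E(\mathcal{P})|-|V(\mathcal{P})|+2c$, where $c$ is the number of (necessarily bipartite) connected components of $RG(\mathcal{P})$, so the stated constant records the connected case $c=1$; one must also confirm that the isomorphism of the second step lands in the $\mathfrak{gl}$-type rather than $\mathfrak{sl}$-type algebra, so that all $|\mathcal{P}^+|$ generators $D_i$ survive and the vertex count is correct.
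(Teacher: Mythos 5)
Your proposal is correct and follows essentially the same route as the paper, whose entire derivation of this statement is the remark immediately preceding it: combine Theorem~30 of \textbf{\cite{BCD}} (identifying a separable type-C Lie poset algebra with the $\mathfrak{gl}$-version of the type-A Lie poset algebra on $\mathcal{P}^+$, whence the extra $+1$ over the $\mathfrak{sl}$-type index that you correctly flag) with the type-A index formulas of \textbf{\cite{seriesA}}, after reducing types B and D to type C via Theorem~\ref{thm:onlyC}. Your caveat that the constant $2$ should really be $2\eta(\mathcal{P})$ when $RG(\mathcal{P})$ is disconnected is a fair reading of the statement and is consistent with how the paper actually invokes it, namely only for connected components (Case~2 of the proof of Theorem~\ref{thm:index1c}).
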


In the following section, we combine and extend Theorems~\ref{thm:index01} and~\ref{thm:indsep} to obtain a single index formula which applies to all type-B, C, and D Lie poset algebras associated with posets of height one.

\section{Extended Index Formula}\label{sec:indf}

In this section, for the sake of brevity, all results will be stated for type-C Lie poset algebras; considering Theorem~\ref{thm:onlyC}, though, all results still apply with ``type-C" replaced by ``type-B" or ``type-D". The main result of this section is Theorem~\ref{thm:index1} below.

\begin{theorem}\label{thm:index1}
If $\mathcal{P}$ is a type-C poset of height one and $\mathfrak{g}=\mathfrak{g}_C(\mathcal{P})$, then $$\ind\mathfrak{g}=|E(\mathcal{P})|-|V(\mathcal{P})|+2\eta(\mathcal{P}),$$ where $\eta(\mathcal{P})$ denotes the number of connected components of $RG(\mathcal{P})$ containing no odd cycles.
\end{theorem}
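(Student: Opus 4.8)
The plan is to compute the index of $\mathfrak{g}=\mathfrak{g}_C(\mathcal{P})$ for a general height-one type-C poset by reducing to the two cases already handled in Theorems~\ref{thm:index01} and~\ref{thm:indsep}. The key structural observation is that a height-one type-C poset need not be of height $(0,1)$ (its positive part $\mathcal{P}^+$ may carry relations) and need not be separable (it may have relations in $Rel_\pm(\mathcal{P})$). So the first step is to understand how these two extra features interact. I would begin by decomposing the relation graph $RG(\mathcal{P})$ into its connected components, since both the target formula and the quantity $\eta(\mathcal{P})$ are additive over components; this lets me reduce to the case that $RG(\mathcal{P})$ is connected, and the index is additive because the Lie poset algebra splits as a direct sum indexed by components (modulo the toral/diagonal part, which I must account for carefully).

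The main computational engine should be the standard combinatorial machinery for the index of these algebras: the index equals $\dim\mathfrak{g}$ minus the maximal rank of the structure matrix (the Gram-type matrix of a generic covector against the Lie bracket), equivalently it is governed by a bilinear form whose kernel one analyzes via the bracket relations among the basis elements $D_i$, $R_{i,j}$, $R^\pm_{i,j}$, and the type-C elements $E_{-i,i}$. Concretely, I would write down the matrix of brackets in the given basis $\mathscr{B}_C(\mathcal{P})$, observe that its nonzero entries are indexed precisely by edges of $RG(\mathcal{P})$ (solid edges from $Rel_\pm$, dashed edges from relations within $\mathcal{P}^-$, and self-loops from $-i\prec i$), and then compute the corank of this matrix. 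The formula $|E(\mathcal{P})|-|V(\mathcal{P})|+2\eta(\mathcal{P})$ is exactly the cyclomatic-type count one expects: $|E|-|V|$ is the first Betti number shifted by the number of components, and the correction $2\eta(\mathcal{P})$ records that each bipartite (odd-cycle-free) component contributes an extra $2$ to the kernel, reflecting that the associated quadratic form over such a component is degenerate in a controlled way.

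**The hard part will be** verifying that the kernel dimension behaves additively and matches the stated count when the poset is simultaneously non-separable and of positive height, i.e. when solid edges (from $Rel_\pm$) and dashed edges (from relations among negatives) coexist in a single connected component and possibly interact with self-loops. In the height-$(0,1)$ case of Theorem~\ref{thm:index01} only the solid/self-loop structure appears, and in the separable case of Theorem~\ref{thm:indsep} the component contributes $|E|-|V|+2$ unconditionally (matching $\eta=1$); I would need to show that mixing the two does not alter the odd-cycle dichotomy governing the $+2\eta$ term. I expect the cleanest route is to interpret an odd cycle in $RG(\mathcal{P})$ --- whether its edges are solid or dashed --- as the obstruction to consistently two-coloring the component, and to show that this coloring is precisely what determines whether a generic covector's radical picks up the extra dimension $2$. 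The remaining steps are then routine: confirm that self-loops (which force an odd structure) always kill the $+2$ contribution, confirm additivity over components by a block-diagonalization argument, and finally invoke Theorem~\ref{thm:onlyC} to transport the result verbatim to types B and D.
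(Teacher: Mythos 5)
Your high-level strategy matches the paper's: reduce to connected components (the paper cites Theorem~\ref{thm:disjoint} for additivity of the index), pass to the commutator matrix whose rows are indexed by edges of $RG(\mathcal{P})$, and compute its corank. However, there is a genuine gap at exactly the point you flag as ``the hard part,'' and your proposed resolution does not close it. The matrix in question assigns to a dashed edge $\{i,j\}$ a row of the form $x_{i}-x_{j}$ and to a non-dashed edge a row of the form $-x_{i}-x_{j}$; this is the incidence matrix of a \emph{signed} graph, and its corank on a connected component is governed by the balance condition --- every cycle must contain an \emph{even number of non-dashed edges} --- not by the unsigned parity of cycle lengths. Your ``two-coloring obstruction'' argument conflates these. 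They genuinely differ for arbitrary mixed graphs: a triangle with one dashed edge and two non-dashed edges has rows $(1,-1,0)$, $(0,-1,-1)$, $(-1,0,-1)$, which satisfy $r_1-r_2+r_3=0$, so the component is balanced and would contribute $+2$ to the index even though it is an odd cycle. If such a component could occur, the stated formula would be false.

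What saves the theorem --- and what your proposal is missing --- is that such configurations cannot arise as relation graphs of height-one type-C posets. This is the content of Lemma~\ref{lem:nono}: a dashed edge $\{i,j\}$ adjacent to a non-dashed edge at vertex $j$ forces $i<j$, and two adjacent dashed edges force a monotonicity constraint; together these imply that every maximal dashed sub-path of a cycle has an even number of edges, hence the number of non-dashed edges in any cycle is congruent mod $2$ to the cycle's length. Only after this combinatorial fact is established does ``contains an odd cycle'' become equivalent to ``unbalanced,'' and the formula with $\eta(\mathcal{P})$ follows. (The triangle above is excluded because its dashed edge meets non-dashed edges at both endpoints, forcing $i<j$ and $j<i$ simultaneously.) The paper implements this via explicit row operations (Propositions~\ref{prop:11t011}, \ref{prop:sloopt}, \ref{prop:ocyclet}, and~\ref{prop:evencl}, supported by Lemmas~\ref{lem:path}, \ref{lem:oddcl2}, and~\ref{lem:evencl}) that convert a mixed component into one with no dashed edges or into a tree, and then invokes Theorems~\ref{thm:index01} and~\ref{thm:indsep}; a direct signed-incidence rank computation as you propose would also work, but only once you have proved the parity constraint on cycles coming from Lemma~\ref{lem:nono}, which is the step your outline omits.
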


To prove Theorem~\ref{thm:index1}, we make use of an alternative characterization of the index. Let $\mathfrak{g}$ be an $n$-dimensional Lie algebra with ordered basis $\mathscr{B}(\mathfrak{g})=\{E_1,\dots,E_n\}$, and define $$\mathcal{C}(\mathfrak{g},\mathscr{B}(\mathfrak{g}))=([E_i,E_j])_{1\leq i,j\leq n}$$ to be the \textit{commutator matrix} associated with $\mathfrak{g}$. Now, for any $\varphi\in\mathfrak{g}^*$, define the matrix
  $$\varphi\left(\mathcal{C}(\mathfrak{g},\mathscr{B}(\mathfrak{g}))\right)=(\varphi([E_i,E_j]))_{1\leq i,j\leq n}.$$  Using the above notation, we have that $$\ind\mathfrak{g}=\dim\mathfrak{g}-\max_{\varphi\in \mathfrak{g^*}}~\text{rank}~\varphi\left(\mathcal{C}(\mathfrak{g},\mathscr{B}(\mathfrak{g}))\right).$$

Now, in proving Theorem~\ref{thm:index1}, the following result of \textbf{\cite{BCD}} allows us to focus on connected posets.

\begin{theorem}[Theorem~46, \textbf{\cite{BCD}}]\label{thm:disjoint}
If $\mathcal{P}$ is a type-C poset of height one such that $RG(\mathcal{P})$ consists of connected components $\{K_1,\hdots,K_n\}$, then $$\ind\mathfrak{g}_C(\mathcal{P})=\sum_{i=1}^n\ind\mathfrak{g}_C(\mathcal{P}_{K_i}),$$ where $\mathcal{P}_{K_i}$ is the unique type-C poset satisfying $RG(\mathcal{P}_{K_i})=K_i$.
\end{theorem}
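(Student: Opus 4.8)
The plan is to lift the decomposition of $RG(\mathcal{P})$ into connected components to a decomposition of $\mathfrak{g}_C(\mathcal{P})$ as a direct sum of ideals, and then to deduce additivity of the index from the commutator-matrix characterization recalled above. Let $S_i\subset\mathcal{P}^+$ be the vertex set of the component $K_i$; since the components partition the vertices of $RG(\mathcal{P})$, the sets $S_i$ partition $\mathcal{P}^+$. Let $\mathfrak{g}_i$ be the span inside $\mathfrak{g}_C(\mathcal{P})$ of those elements of $\mathscr{B}_C(\mathcal{P})$ all of whose indices lie in $\{-j,j : j\in S_i\}$; by construction $\mathfrak{g}_i$ is exactly the subalgebra $\mathfrak{g}_C(\mathcal{P}_{K_i})$. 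The first step is to check that $\mathscr{B}_C(\mathcal{P})$ is the disjoint union of the $\mathscr{B}_C(\mathcal{P}_{K_i})$, i.e.\ that every basis element is supported on a single component. This is immediate from Definition~\ref{def:RG} and the height-one hypothesis: a generator $D_j$ or $E_{-j,j}$ involves only the pair $\{-j,j\}$, while $R^{\pm}_{j,k}$ and $R_{j,k}$ exist precisely when $j$ and $k$ are joined by a solid, respectively dashed, edge of $RG(\mathcal{P})$, forcing $j,k$ into the same $S_i$. Consequently $\mathfrak{g}_C(\mathcal{P})=\bigoplus_i\mathfrak{g}_i$ as vector spaces.

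The key structural step is to show $[\mathfrak{g}_i,\mathfrak{g}_{i'}]=0$ for $i\neq i'$, so that each $\mathfrak{g}_i$ is an ideal and the sum is a Lie-algebra direct sum. Because the bracket on $\mathfrak{g}_C(\mathcal{P})$ is the matrix commutator, it suffices to observe that every basis element of $\mathfrak{g}_i$ has all of its nonzero matrix entries in rows and columns indexed by $\{-j,j : j\in S_i\}$, and likewise for $\mathfrak{g}_{i'}$. Since $S_i\cap S_{i'}=\emptyset$ these two index sets are disjoint, so for matrices $X\in\mathfrak{g}_i$ and $Y\in\mathfrak{g}_{i'}$ no column index of $X$ meets a row index of $Y$ (or vice versa); hence $XY=YX=0$ and in particular $[X,Y]=0$. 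I expect this support-disjointness verification to be the main point requiring care—one must confirm that the height-one form of each generator really does confine its support to a single pair or edge—after which the vanishing of all cross-component brackets is automatic.

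With the decomposition $\mathfrak{g}_C(\mathcal{P})=\bigoplus_i\mathfrak{g}_C(\mathcal{P}_{K_i})$ into commuting ideals established, I would finish using the identity $\ind\mathfrak{g}=\dim\mathfrak{g}-\max_{\varphi}\operatorname{rank}\varphi(\mathcal{C}(\mathfrak{g},\mathscr{B}(\mathfrak{g})))$. Ordering the basis $\mathscr{B}(\mathfrak{g})=\bigsqcup_i\mathscr{B}_C(\mathcal{P}_{K_i})$ by component, the vanishing of cross-component brackets makes $\mathcal{C}(\mathfrak{g},\mathscr{B}(\mathfrak{g}))$ block-diagonal with blocks $\mathcal{C}_i:=\mathcal{C}(\mathfrak{g}_i,\mathscr{B}_C(\mathcal{P}_{K_i}))$. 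Writing $\varphi\in\mathfrak{g}^*$ as $(\varphi_1,\dots,\varphi_n)$ with $\varphi_i=\varphi|_{\mathfrak{g}_i}$, the brackets within $K_i$ lie in $\mathfrak{g}_i$, so $\varphi(\mathcal{C})$ is block-diagonal with $i$-th block $\varphi_i(\mathcal{C}_i)$; thus $\operatorname{rank}\varphi(\mathcal{C})=\sum_i\operatorname{rank}\varphi_i(\mathcal{C}_i)$. As the restrictions $\varphi_i$ vary independently over $\mathfrak{g}_i^*$, the maximum separates, giving $\max_\varphi\operatorname{rank}\varphi(\mathcal{C})=\sum_i\max_{\varphi_i}\operatorname{rank}\varphi_i(\mathcal{C}_i)$. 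Combining with $\dim\mathfrak{g}=\sum_i\dim\mathfrak{g}_i$ yields
$$\ind\mathfrak{g}_C(\mathcal{P})=\sum_i\Bigl(\dim\mathfrak{g}_i-\max_{\varphi_i}\operatorname{rank}\varphi_i(\mathcal{C}_i)\Bigr)=\sum_{i=1}^{n}\ind\mathfrak{g}_C(\mathcal{P}_{K_i}),$$
as claimed.
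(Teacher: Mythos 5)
Your proof is correct. The paper does not reprove this statement---it imports it from \textbf{\cite{BCD}} (noting only that the proof there extends from height $(0,1)$ to height one)---but your argument is exactly the natural one and coincides with the decomposition the paper itself establishes later in Lemma~\ref{lem:decompose}: the basis elements $D_i$, $E_{-i,i}$, $R_{i,j}$, $R^{\pm}_{i,j}$ are each supported on the index set of a single component, so $\mathfrak{g}_C(\mathcal{P})$ splits as a direct sum of commuting ideals, and the block-diagonal form of $\varphi(\mathcal{C})$ then gives additivity of the index.
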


\begin{remark}
In \textup{\textbf{\cite{BCD}}}, Theorem~\ref{thm:disjoint} was stated only for type-C Lie poset algebras associated with posets of height $(0,1)$, but the proof applies to type-C Lie poset algebras associated with posets of height one in general.
\end{remark}

Considering Theorems~\ref{thm:index01},~\ref{thm:indsep}, and~\ref{thm:disjoint}, our first step towards proving Theorem~\ref{thm:index1} is to establish an index formula which applies to type-C Lie poset algebras associated with connected, type-C posets of height $(1,1)$. As the desired index formula is in terms of $RG(\mathcal{P})$, let us first determine restrictions on the extended version of $RG(\mathcal{P})$ required for $\mathcal{P}$ to be of height one. In Figure~\ref{fig:subgraphs}, we illustrate all possible subgraphs of $RG(\mathcal{P})$ consisting of two adjacent edges with at least one dashed edge.

\begin{figure}[H]
    \centering
    $$\begin{tikzpicture}[scale=0.65]
        \node (1) at (0,0) [circle, draw = black,fill=black, inner sep = 0.5mm, label=below:{i}] {};
        \node (2) at (1,0) [circle, draw = black,fill=black, inner sep = 0.5mm, label=below:{j}] {};
        \draw (1,0) .. controls (0.5,0.75) and (1.5,0.75) .. (1,0);
        \draw[dashed] (1)--(2);
        \node at (0.5, -2) {(a)};
    \end{tikzpicture}\quad\quad\begin{tikzpicture}[scale=0.65]
    \node (1) at (0,0) [circle, draw = black,fill=black, inner sep = 0.5mm, label=below:{i}] {};
    \node (2) at (1,0) [circle, draw = black,fill=black, inner sep = 0.5mm, label=below:{j}] {};
  \draw[dashed] (0,0) to[bend right=60] (1,0);
  \draw (1,0) to[bend right=60] (0,0);
  \node at (0.5, -2) {(b)};
\end{tikzpicture}\quad\quad\begin{tikzpicture}[scale=0.65]
    \node (1) at (0,0) [circle, draw = black,fill=black, inner sep = 0.5mm, label=below:{i}] {};
    \node (2) at (1,0) [circle, draw = black,fill=black, inner sep = 0.5mm, label=below:{j}] {};
    \node (3) at (2,0) [circle, draw = black,fill=black, inner sep = 0.5mm, label=below:{k}] {};
    \draw[dashed] (1)--(2);
    \draw (2)--(3);
  \node at (1, -2) {(c)};
\end{tikzpicture}\quad\quad\begin{tikzpicture}[scale=0.65]
    \node (1) at (0,0) [circle, draw = black,fill=black, inner sep = 0.5mm, label=below:{i}] {};
    \node (2) at (1,0) [circle, draw = black,fill=black, inner sep = 0.5mm, label=below:{j}] {};
    \node (3) at (2,0) [circle, draw = black,fill=black, inner sep = 0.5mm, label=below:{k}] {};
    \draw[dashed] (1)--(2);
    \draw[dashed] (2)--(3);
  \node at (1, -2) {(d)};
\end{tikzpicture}$$
    \caption{Subgraphs consisting of dashed and non-dashed edge}
    \label{fig:subgraphs}
\end{figure}
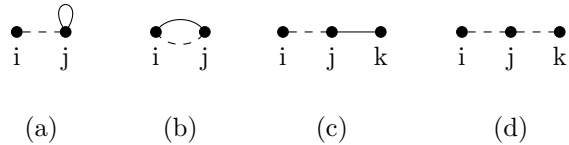

\begin{lemma}\label{lem:nono}
If $\mathcal{P}$ is a type-C poset of height one, then $RG(\mathcal{P})$ does not contain the following subgraphs with vertex set $V$, dashed edge set $E_D$, and non-dashed edge set $E_{\overline{D}}$:
\begin{enumerate}
    \item[\textup{(a)}] $V=\{i,j\}$, $E_D=\{\{i,j\}\}$, $E_{\overline{D}}=\{\{j,j\}\}$ with $i>j$. See Figure~\ref{fig:subgraphs} \textup(a\textup).
    \item[\textup{(b)}] $V=\{i,j\}$, $E_D=\{\{i,j\}\}$, $E_{\overline{D}}=\{\{i,j\}\}$. See Figure~\ref{fig:subgraphs} \textup(b\textup).
    \item[\textup{(c)}] $V=\{i,j,k\}$, $E_D=\{\{i,j\}\}$, $E_{\overline{D}}=\{\{j,k\}\}$ with $i>j$. See Figure~\ref{fig:subgraphs} \textup(c\textup).
    \item[\textup{(d)}] $V=\{i,j,k\}$, $E_D=\{\{i,j\},\{j,k\}\}$ with $i>j>k$ or $i<j<k$. See Figure~\ref{fig:subgraphs} \textup(d\textup).
\end{enumerate}
\end{lemma}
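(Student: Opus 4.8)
The plan is to argue each case by contradiction: assuming the listed subgraph occurs in $RG(\mathcal{P})$, I will translate its edges back into strict relations of $\mathcal{P}$ via Definition~\ref{def:RG}, compose two of those relations using transitivity, and thereby produce a chain of three distinct elements. Since $\mathcal{P}$ is assumed to have height one, every chain has cardinality at most two, so exhibiting a three-element chain gives the desired contradiction. The whole argument is uniform once the correspondence between edges and relations is set up precisely.

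The first step is therefore to record the dictionary between edges of $RG(\mathcal{P})$ and relations of $\mathcal{P}$, taking care to invoke the type-C symmetry (condition~2 of Definition~\ref{def:BCDposet}). A solid edge joining vertices $i$ and $j$ encodes $-i\prec j$; since $i\neq j$ forces $-i\neq -j$, condition~2 also yields the mirror relation $-j\prec i$. A solid self-loop at $j$ encodes $-j\prec j$. A dashed edge joining $i$ and $j$ encodes $-i\prec -j$; because $-i\neq j$ always holds, condition~2 again supplies the mirror relation $j\prec i$. Finally, condition~1 forces $-i<-j$, i.e.\ $i>j$, whenever $-i\prec -j$, so the orientation of each dashed edge is determined by the labels, and the ordering hypotheses attached to cases (a), (c), and (d) simply fix which endpoint plays which role.

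With this dictionary the four cases are immediate. In (a) the self-loop gives $-j\prec j$ and the dashed edge (with $i>j$) gives $j\prec i$, so $-j\prec j\prec i$. In (b), taking $i>j$ without loss of generality, the dashed edge gives $-i\prec -j$ and the solid edge gives the mirror relation $-j\prec i$, so $-i\prec -j\prec i$. In (c) the dashed edge gives $-i\prec -j$ and the solid edge gives $-j\prec k$, so $-i\prec -j\prec k$. In (d), when $i>j>k$ the two dashed edges give $-i\prec -j$ and $-j\prec -k$, so $-i\prec -j\prec -k$, while the case $i<j<k$ is symmetric and yields $-k\prec -j\prec -i$. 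In each instance the three displayed elements are distinct—the middle element lies strictly between the other two in the natural order on $\mathbb{Z}$, and the negatives are separated from the positives—so each chain genuinely has cardinality three, contradicting height one.

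The argument is essentially bookkeeping, and the only point that demands care is the correct use of condition~2 to recover the mirror relation associated with each edge, together with the use of condition~1 to pin down the direction of a dashed edge (that is, $-i\prec -j$ rather than $-j\prec -i$). Once the dictionary is stated precisely, no genuine obstacle remains beyond this sign-and-orientation bookkeeping, and the four cases close uniformly.
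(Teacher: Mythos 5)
Your proposal is correct and follows essentially the same route as the paper: assume the forbidden subgraph occurs, translate its edges into strict relations of $\mathcal{P}$, and concatenate two of them to exhibit a three-element chain contradicting height one. The only cosmetic difference is that in cases (b), (c), and (d) you display the mirror-image chains (e.g.\ $-i\prec -j\prec i$ where the paper uses $-i\prec j\prec i$), which is immaterial given the type-C symmetry.
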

\begin{proof}
Assume otherwise. For (a), the poset $\mathcal{P}$ would contain the chain $-j\prec j\prec i$. For (b), assume without loss of generality that $i>j$. In this case, $\mathcal{P}$ would contain the chain $-i\prec j\prec i$. For (c), the poset would contain the chain $-k\prec j\prec i$. Finally, for (d), assume without loss of generality that $i<j<k$. In this case, $\mathcal{P}$ would contain the chain $i\prec j\prec k$.
\end{proof}

\begin{remark}
    Note that, as a consequence of Lemma~\ref{lem:nono}, the only subgraphs of $RG(\mathcal{P})$ for $\mathcal{P}$ of height one that can occur consisting of two adjacent edges with one dashed edge and one non-dashed are of the form \textup(a\textup) and \textup(c\textup) in Figure~\ref{fig:subgraphs} with $i<j$.
\end{remark}

Next, we show how one can relate the index of a connected, type-C Lie poset algebra associated with a poset of height $(1,1)$ to the index of one associated with a poset of height $(0,1)$. Set \\$\mathcal{C}(\mathfrak{g}_C(\mathcal{P}),\mathscr{B}_C(\mathcal{P}))=\mathcal{C}(\mathfrak{g}_C(\mathcal{P}))$, where the elements of $\mathscr{B}_C(\mathcal{P})$ are ordered as follows: 
\begin{enumerate}
    \item the elements $D_i$ in increasing order of $i$ in $\mathbb{Z}$ followed by
    \item the elements $R_{i,j}$ in increasing lexicographic order of $(i,j)$ in $\mathbb{Z}\times\mathbb{Z}$ followed by
    \item the elements $E_{-i,i}$ in increasing order of $i$ in $\mathbb{Z}$ followed by
    \item the elements $R^{\pm}_{i,j}$ in increasing lexicographic order of $(i,j)$ for $i<j$ in $\mathbb{Z}\times\mathbb{Z}$.
\end{enumerate}

\noindent
With this ordering, since type-C posets of height one have no non-trivial transitivity relations, $\varphi(\mathcal{C}(\mathfrak{g}_C(\mathcal{P})))$ for $\varphi\in\mathfrak{g}^*$ has the form illustrated in Figure~\ref{fig:h01m}.

\begin{figure}[H]
$$\begin{tikzpicture}
  \matrix [matrix of math nodes,left delimiter={(},right delimiter={)}]
  {
    0  & -M_{\varphi}(\mathcal{P})^T   \\       
    M_{\varphi}(\mathcal{P})  & 0   \\       };
\end{tikzpicture}$$
\caption{Matrix form of $\varphi(\mathcal{C}(\mathfrak{g}_C(\mathcal{P})))$, for $\mathcal{P}$ a type-C poset of height one}\label{fig:h01m}
\end{figure}

\noindent
Here, $M_{\varphi}(\mathcal{P})$ is the restriction of $\varphi(\mathcal{C}(\mathfrak{g}_C(\mathcal{P})))$ to rows $i>|V(\mathcal{P})|$ and columns $1\le j\le |V(\mathcal{P})|$, and $-M_{\varphi}(\mathcal{P})^T$ is the restriction of $\varphi(\mathcal{C}(\mathfrak{g}_C(\mathcal{P})))$ to rows $1\le i\le |V(\mathcal{P})|$ and columns $j>|V(\mathcal{P})|$. Since $\text{rank}(M_{\varphi}(\mathcal{P}))=\text{rank}(M_{\varphi}(\mathcal{P})^T)$, to calculate $\ind\mathfrak{g}_C(\mathcal{P})$ it suffices to determine the maximum possible rank of $M_{\varphi}(\mathcal{P})$, for $\varphi\in\mathfrak{g}^*$.

Now, since the $i^{th}$ basis element of $\mathscr{B}_C(\mathcal{P})$ is equal to $D_i$ if $1\le i\le |V(\mathcal{P})|$ and is of the form $R_{j,k}$, $E_{-j,j}$, or $R_{j,k}^{\pm}$ if $i>|V(\mathcal{P})|$, it follows that for each row of $M_{\varphi}(\mathcal{P}),$ there exists a unique element $x\in\mathscr{B}_C(\mathcal{P})$ of the form $R_{j,k},E_{-j,j}$ or $R_{j,k}^{\pm}$ such that all nonzero entries of the row are multiples of $\varphi(x)$. Thus, in determining the maximum possible rank of $M_{\varphi}(\mathcal{P})$, we may assume that $\varphi(x)=1$ for all basis elements $x\in\mathscr{B}_C(\mathcal{P})$; that is, taking $\varphi'\in\mathfrak{g}^*$ satisfying $\varphi'(x)=1$ for all $x\in\mathscr{B}_C(\mathcal{P})$ and setting $M(\mathcal{P})=M_{\varphi'}(\mathcal{P})$, we have
\begin{equation}\label{eqn:index}
\ind\mathfrak{g}_C(\mathcal{P})=\dim\mathfrak{g}_C(\mathcal{P})-2~\text{rank}(M(\mathcal{P})).
\end{equation}

Note that each edge of $RG(\mathcal{P})$ corresponds to a unique row of $M(\mathcal{P})$. Ongoing, it will be helpful to have a generalization of $M(\mathcal{P})$ which can be associated with arbitrary graphs consisting of dashed edges, non-dashed self-loops, and non-dashed edges. Thus, we extend the definition of $M(\mathcal{P})$ as follows.

\begin{definition}
Given a graph $G=(V,E)$, define $M(G)$ to be the $|E|\times |V|$ matrix where for each 
\begin{itemize}
    \item dashed edge $e=\{i,j\}\in E$ with $i<j$, there exists a corresponding row of $M(G)$, denoted $\mathbf{R_{i,j}(G)}$, of the form $x_{i,|V|}-x_{j,|V|}$;
    \item non-dashed self-loop $e=\{i,i\}\in E$, there exists a corresponding row of $M(G)$, denoted $\mathbf{E_{-i,i}(G)}$, of the form $-2x_{i,|V|}$;
    \item non-dashed edge $e=\{i,j\}\in E$ with $i\neq j$, there exists a corresponding row of $M(G)$, denoted $\mathbf{R_{i,j}^{\pm}(G)}$ or $\mathbf{R_{j,i}^{\pm}(G)}$, of the form $-x_{i,|V|}-x_{j,|V|}$.
\end{itemize}
\end{definition}

With this definition, if $G=RG(\mathcal{P})$, then (up to rearranging rows) $M(\mathcal{P})=M(G)$.

\begin{remark}\label{rem:ht01}
Note that if a graph $G$ contains only non-dashed edges and no two edges between the same vertices, then there exists a type-C poset $\mathcal{P}$ of height $(0,1)$ such that $RG(\mathcal{P})=G$. To see this, recall that non-dashed edges correspond to relations of the form $-i\prec j$ and $-j\prec i$. For such collections of relations, no nontrivial transitivity relations can arise and antisymmetry is immediate. On the other hand, considering Lemma~\ref{lem:nono}, graphs with both dashed and non-dashed edges may not correspond to type-C posets of height one.
\end{remark}

\begin{proposition}\label{prop:11t011}
Let $\mathcal{P}$ be a connected, non-separable, type-C poset of height $(1,1)$ for which $RG(\mathcal{P})$ is a tree. Then there exists a connected, type-C poset $\mathcal{P}'$ of height $(0,1)$ such that $|V(\mathcal{P})|=|V(\mathcal{P}')|$, $|E(\mathcal{P})|=|E(\mathcal{P}')|$, $RG(\mathcal{P}')$ is a tree, and $\ind\mathfrak{g}_C(\mathcal{P})=\ind\mathfrak{g}_C(\mathcal{P}')$. 
\end{proposition}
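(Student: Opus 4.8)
The plan is to construct $\mathcal{P}'$ directly at the level of relation graphs and then reduce the index comparison to a comparison of matrix ranks via \eqref{eqn:index}. Concretely, I would let $\mathcal{P}'$ be the type-C poset whose relation graph $RG(\mathcal{P}')$ is obtained from $RG(\mathcal{P})$ by replacing every dashed edge with a non-dashed edge on the same pair of vertices, while leaving the existing non-dashed edges unchanged. Since $RG(\mathcal{P})$ is a tree, it has no self-loops and no repeated edges, so $RG(\mathcal{P}')$ is again a tree, now with only non-dashed edges and no multi-edges; by Remark~\ref{rem:ht01} this is the relation graph of a genuine type-C poset $\mathcal{P}'$ of height $(0,1)$. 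By construction $\mathcal{P}'$ is connected (its relation graph is a tree), $RG(\mathcal{P}')$ is a tree, and relabeling edges changes neither the vertex set nor the edge set, so $|V(\mathcal{P}')|=|V(\mathcal{P})|$ and $|E(\mathcal{P}')|=|E(\mathcal{P})|$.

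It then remains to prove $\ind\mathfrak{g}_C(\mathcal{P})=\ind\mathfrak{g}_C(\mathcal{P}')$. Because a height-one poset has no nontrivial transitivity relations, its Lie poset algebra has one diagonal basis element per vertex and one off-diagonal basis element per edge, so $\dim\mathfrak{g}_C(\mathcal{P})=|V(\mathcal{P})|+|E(\mathcal{P})|$, and likewise for $\mathcal{P}'$. The equalities of vertex and edge counts give $\dim\mathfrak{g}_C(\mathcal{P})=\dim\mathfrak{g}_C(\mathcal{P}')$, so by \eqref{eqn:index} the claim reduces to showing $\operatorname{rank}(M(\mathcal{P}))=\operatorname{rank}(M(\mathcal{P}'))$. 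I would in fact prove the stronger statement that both matrices attain full row rank, via the general fact that for any tree $T$ on $m$ vertices the $(m-1)\times m$ matrix $M(T)$ has rank $m-1$, \emph{regardless of which edges are dashed and which are non-dashed}.

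The key step is this rank computation, which I would carry out by leaf-peeling. Suppose some linear combination of the rows of $M(T)$ vanishes. Choose a leaf $v$ of $T$ and let $e_0$ be its unique incident edge. Then the column of $M(T)$ indexed by $v$ has a single nonzero entry, sitting in the row corresponding to $e_0$: it equals $\pm1$ for a dashed edge and $-1$ for a non-dashed edge, and no self-loop occurs since $T$ is a tree. Reading off that column forces the coefficient of the $e_0$-row to be zero; deleting $v$ and $e_0$ yields a tree on $m-1$ vertices, so induction on the number of vertices shows every coefficient vanishes, i.e.\ the rows of $M(T)$ are linearly independent. Applying this to $T=RG(\mathcal{P})$ and to $T=RG(\mathcal{P}')$ gives $\operatorname{rank}(M(\mathcal{P}))=\operatorname{rank}(M(\mathcal{P}'))=|V(\mathcal{P})|-1$, which by the previous paragraph completes the proof.

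The only genuine obstacle is confirming that the rank is insensitive to the dashed/non-dashed labeling, and the leaf-peeling argument handles this uniformly precisely because each leaf produces a column with a single $\pm1$ entry no matter the type of its incident edge. As a consistency check, this computation also yields $\ind\mathfrak{g}_C(\mathcal{P})=\dim\mathfrak{g}_C(\mathcal{P})-2(|V(\mathcal{P})|-1)=1$ for every such tree, so both $\mathfrak{g}_C(\mathcal{P})$ and $\mathfrak{g}_C(\mathcal{P}')$ are Frobenius, consistent with the intended reduction to the height-$(0,1)$ formula of Theorem~\ref{thm:index01}.
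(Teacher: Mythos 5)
Your proof is correct, but it takes a genuinely different route from the paper's. The paper constructs $\mathcal{P}'$ by an iterative procedure that \emph{moves} each dashed edge: it finds a dashed edge $\{i,j\}$ adjacent to a non-dashed edge $\{j,k\}$ (this is where non-separability is used, to seed the process), deletes $\{i,j\}$, and inserts a non-dashed edge on the \emph{new} pair $\{i,k\}$, justifying rank-preservation by the explicit row operation $-\mathbf{R_{i,j}}+\mathbf{R^{\pm}_{j,k}}=\mathbf{R^{\pm}_{i,k}}$ and separately checking at each step that the result is still a tree. You instead keep the underlying graph fixed, relabel every dashed edge as non-dashed, and prove the stronger, cleaner lemma that $M(T)$ has full row rank $|V(T)|-1$ for \emph{any} tree $T$ irrespective of the dashed/non-dashed labeling, via leaf-peeling. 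Your argument is more elementary, avoids the bookkeeping about preserving treeness, never uses the non-separability hypothesis (so it also covers the separable tree case), and in fact directly yields $\ind\mathfrak{g}_C(\mathcal{P})=1$ for every tree, which would let one shortcut Case 4 of Theorem~\ref{thm:index1c}. What the paper's approach buys in exchange is a uniform template: the same ``replace a row by a signed combination of rows along a path'' mechanism is reused almost verbatim in Propositions~\ref{prop:sloopt}, \ref{prop:ocyclet}, and \ref{prop:evencl}, where your relabeling trick would not suffice because there the rank genuinely depends on the edge types. One small slip in your closing aside: an index-one algebra is \emph{contact}-eligible, not Frobenius (Frobenius means index zero), so the consistency check should read that both algebras have index one, matching $|E|-|V|+2=1$ from Theorem~\ref{thm:index01}; this does not affect the validity of the main argument.
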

\begin{proof}
Let $|E_D|$ denote the number of dashed edges in $RG(\mathcal{P})$. We define $\mathcal{P}'$ by constructing $RG(\mathcal{P}')$ from $RG(\mathcal{P})$, removing one dashed edge at a time and adding a new non-dashed one. Let $G_0=RG(\mathcal{P})$ and $N=|V(\mathcal{P})|$.
\bigskip

\noindent
\textbf{Step 1}: Since $\mathcal{P}$ is connected, non-separable, and of height $(1,1)$, there must exist a dashed edge $e_d$ that shares a vertex with a non-dashed edge $e_{\overline{d}}$. Considering Lemma~\ref{lem:nono}, since $RG(\mathcal{P})$ is a tree, $e_d=\{i,j\}$ with $i<j$ and $e_{\overline{d}}=\{j,k\}$. Form $G_1$ by removing $e_d$ from $G_0$ and adding the non-dashed edge $\{i,k\}$.
Note that $\{i,k\}$ is not an edge of $G_0$, since otherwise $G_0$ would contain a cycle defined by the vertices $i,j,k$, contradicting our assumption that $G_0$ is a tree. Moreover, note that $G_1$ is a tree. To see this, note that if $G_1$ contains a cycle, say $\mathcal{C}$, then by construction $\mathcal{C}$ must contain the edge $\{i,k\}$. There are two cases.
\begin{itemize}
    \item If $\mathcal{C}$ also contains the edge $e_{\overline{d}}$, then replacing $\{i,k\}$ and $e_{\overline{d}}=\{j,k\}$ by $\{i,j\}$ in $\mathcal{C}$ results in a cycle contained in $G_0$, a contradiction.
    \item If $\mathcal{C}$ does not contain the edge $e_{\overline{d}}$, then replacing $\{i,k\}$ by $e_d$ and $e_{\overline{d}}$ in $\mathcal{C}$ results in a cycle of $G_0$, a contradition.
\end{itemize}
Now, since one can form $M(G_1)$ from $M(G_0)$ by replacing $$\mathbf{R_{i,j}(G_0)}=x_{i,N}-x_{j,N}$$ with
\begin{align*}
    -\mathbf{R_{i,j}(G_0)}+\mathbf{R^{\pm}_{j,k}(G_0)}&=-(x_{i,N}-x_{j,N})+(-x_{j,N}-x_{k,N}) \\
    &=-x_{i,N}-x_{k,N}\\
    &=\mathbf{R^{\pm}_{i,k}(G_1)},
\end{align*}
it follows that $\text{rank}(M(G_1))=\text{rank}(M(G_0))=\text{rank}(M(\mathcal{P}))$. Note that, considering Remark~\ref{rem:ht01}, $G_1$ may not correspond to the relations graph of a type-C poset of height one.
\bigskip

\noindent
\textbf{Step m}: By construction, $G_{m-1}$ is a tree with $|V(\mathcal{P})|$ vertices and $|E(\mathcal{P})|$ edges. If there are no dashed edges, we are finished. Otherwise, there exists a dashed edge $e_d=\{i,j\}$ which shares a vertex with a non-dashed edge $e_{\overline{d}}=\{j,k\}$. Note that it is not necessarily the case that $i<j$ because $G_{m-1}$ may not be the relations graph of a type-C poset of height one. Form $G_m$ by replacing $e_d$ in $G_{m-1}$ by the non-dashed edge $\{i,k\}$. Arguing as in Step 1, we find that $\{i,k\}$ is not an edge of $G_{m-1}$ and $G_m$ is a tree. Now, let $\mathbf{R}(G_{m-1})$ denote the row of $M(G_{m-1})$ corresponding to the edge $\{i,j\}$, i.e., $\mathbf{R}(G_{m-1})=\mathbf{R}_{i,j}(G_{m-1})$ if $i<j$ and $\mathbf{R}(G_{m-1})=\mathbf{R}_{j,i}(G_{m-1})$ otherwise. Then we can form $M(G_{m})$ from $M(G_{m-1})$ by replacing $$\mathbf{R(G_{m-1})}=(1-2\delta_{i>j})(x_{i,N}-x_{j,N})$$ of $M(G_{m-1})$ with
\begin{align*}
    (1-2\delta_{i<j})\mathbf{R(G_{m-1})}+\mathbf{R^{\pm}_{j,k}(G_{m-1})}&=(1-2\delta_{i<j})(1-2\delta_{i>j})(x_{i,N}-x_{j,N})+(-x_{j,N}-x_{k,N}) \\
    &=-(x_{i,N}-x_{j,N})+(-x_{j,N}-x_{k,N}) \\
    &=-x_{i,N}-x_{k,N}\\
    &=\mathbf{R^{\pm}_{i,k}(G_{m})}.
\end{align*}
It follows that $\text{rank}(G_{m-1})=\text{rank}(G_{m})$.
\bigskip

\noindent
Since in each step a dashed edge is removed and none are added, it follows that $G_{|E_D|}$ is a tree which contains no dashed edges. Consequently, considering Remark~\ref{rem:ht01}, $G_{|E_D|}$ is the relations graph for some connected, type-C poset $\mathcal{P}'$ of height $(0,1)$. Moreover, our work above shows that $$\text{rank}(M(\mathcal{P}'))=\text{rank}(M(G_{|E_D|}))=\text{rank}(M(\mathcal{P}))$$ and $$\dim\mathfrak{g}_C(\mathcal{P}')=|E(\mathcal{P}')|+|V(\mathcal{P}')|=|E(\mathcal{P})|+|V(\mathcal{P})|=\dim\mathfrak{g}_C(\mathcal{P}).$$ Therefore, $\ind\mathfrak{g}_C(\mathcal{P})=\ind\mathfrak{g}_C(\mathcal{P}')$.
\end{proof}

In order to obtain an analogous result in the case where $RG(\mathcal{P})$ contains a self loop, we require the following technical lemma.

\begin{lemma}\label{lem:path}
If $1\le i_0\neq i_1\neq \hdots\neq i_n\le N$ with $n\ge 1$, then $$\sum_{j=0}^{n-1}(-1)^{j+1}(-x_{i_j,N}-x_{i_{j+1},N})=x_{i_0,N}-(-1)^nx_{i_n,N}.$$
\end{lemma}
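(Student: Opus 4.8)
The plan is to recognize the left-hand side as a telescoping sum in the standard basis vectors, so that almost every term cancels against its neighbor. Writing $a_j = x_{i_j,N}$ for brevity, I would first simplify a single summand. Since $(-1)^{j+1}(-a_j) = (-1)^{j}a_j$ and $(-1)^{j+1}(-a_{j+1}) = (-1)^{j}a_{j+1}$, each term collapses to
\[
(-1)^{j+1}\bigl(-x_{i_j,N}-x_{i_{j+1},N}\bigr) = (-1)^j x_{i_j,N} + (-1)^j x_{i_{j+1},N}.
\]

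Next, I would introduce the auxiliary sequence $c_j = (-1)^j x_{i_j,N}$ and observe that $c_j - c_{j+1} = (-1)^j x_{i_j,N} - (-1)^{j+1} x_{i_{j+1},N} = (-1)^j x_{i_j,N} + (-1)^j x_{i_{j+1},N}$, which is exactly the simplified $j$th summand computed above. Hence the entire sum rewrites as the telescoping sum $\sum_{j=0}^{n-1}(c_j - c_{j+1})$, which equals $c_0 - c_n = x_{i_0,N} - (-1)^n x_{i_n,N}$, as desired. This is the whole argument; the only care required is sign bookkeeping in collapsing the summand and in identifying the correct telescoping sequence $c_j$.

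An equally clean alternative is induction on $n$: the base case $n=1$ gives $(-1)^1(-x_{i_0,N}-x_{i_1,N}) = x_{i_0,N}+x_{i_1,N} = x_{i_0,N}-(-1)^1 x_{i_1,N}$, and the inductive step separates off the $j=n-1$ summand, applies the hypothesis to the sum over $j=0,\dots,n-2$, and combines terms (again purely a matter of tracking the alternating sign). I expect no genuine obstacle here, as the statement is a formal identity among vectors. I would remark that the hypothesis $i_0 \neq i_1 \neq \cdots \neq i_n$ plays no role in the identity itself — it merely reflects that these indices will arise as endpoints of the edges of $RG(\mathcal{P})$ along a path, so that each $-x_{i_j,N}-x_{i_{j+1},N}$ is a genuine non-dashed-edge row — and the equality holds as written even if the $i_j$ are not globally distinct.
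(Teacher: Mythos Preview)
Your proof is correct. The paper proves this by induction on $n$, which is exactly the alternative you sketch; your primary telescoping argument is an equivalent (and slightly slicker) packaging of the same cancellation, and your remark that the distinctness hypothesis on the $i_j$ is inessential to the identity is accurate.
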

\begin{proof}
By induction. When $n=1$ the result is trivial. Assume that the result holds for $n-1\ge 0$. We have
\begin{align*}
    \sum_{j=0}^{n-1}(-1)^{j+1}(-x_{i_j,N}-x_{i_{j+1},N})&=(-1)^n(-x_{i_{n-1},N}-x_{i_n,N})+\sum_{j=0}^{n-2}(-1)^{j+1}(-x_{i_j,N}-x_{i_{j+1},N}) \\
    &=(-1)^n(-x_{i_{n-1},N}-x_{i_n,N})+x_{i_0,N}-(-1)^{n-1}x_{i_{n-1},N}\\
    &=x_{i_0,N}-(-1)^nx_{i_n,N},
\end{align*}
where the second equality follows from our inductive hypothesis. Thus, the result follows by induction. 
\end{proof}

\begin{proposition}\label{prop:sloopt}
Let $\mathcal{P}$ be a connected, non-separable, type-C poset of height $(1,1)$ for which $RG(\mathcal{P})$ contains a self-loop. Then there exists a connected, type-C poset $\mathcal{P}'$ of height $(0,1)$ such that $|V(\mathcal{P})|=|V(\mathcal{P}')|$, $|E(\mathcal{P})|=|E(\mathcal{P}')|$, $RG(\mathcal{P}')$ contains a self-loop, and $\ind\mathfrak{g}_C(\mathcal{P})=\ind\mathfrak{g}_C(\mathcal{P}')$. 
\end{proposition}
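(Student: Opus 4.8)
The plan is to sidestep the explicit dashed-edge surgery of Proposition~\ref{prop:11t011} and instead compute $\text{rank}(M(\mathcal{P}))$ outright, exploiting that the presence of a self-loop forces $M(\mathcal{P})$ to have full column rank. By \eqref{eqn:index} we have $\ind\mathfrak{g}_C(\mathcal{P})=\dim\mathfrak{g}_C(\mathcal{P})-2\,\text{rank}(M(\mathcal{P}))$, and since $\mathscr{B}_C(\mathcal{P})$ contributes one diagonal element $D_i$ per vertex together with exactly one element per edge (one $R_{i,j}$ per dashed edge, one $R^{\pm}_{i,j}$ per non-dashed edge, one $E_{-i,i}$ per self-loop), we have $\dim\mathfrak{g}_C(\mathcal{P})=|V(\mathcal{P})|+|E(\mathcal{P})|$. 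Hence it suffices to produce a connected, height-$(0,1)$ type-C poset $\mathcal{P}'$ carrying a self-loop, with $|V(\mathcal{P}')|=|V(\mathcal{P})|$, $|E(\mathcal{P}')|=|E(\mathcal{P})|$, and $\text{rank}(M(\mathcal{P}'))=\text{rank}(M(\mathcal{P}))$.

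The crux is the following claim: if $RG(\mathcal{Q})$ is connected and contains a self-loop, then $\text{rank}(M(\mathcal{Q}))=|V(\mathcal{Q})|$, i.e. the rows of $M(\mathcal{Q})$ span all of $\mathbb{C}^{N}$ with $N=|V(\mathcal{Q})|$. To see this, let $v$ carry a self-loop; its row $\mathbf{E_{-v,v}(RG(\mathcal{Q}))}=-2x_{v,N}$ already places $x_{v,N}$ in the row space. I then propagate along edges: a dashed edge $\{a,b\}$ gives a row of the form $\pm(x_{a,N}-x_{b,N})$ and a non-dashed edge $\{a,b\}$ gives $-x_{a,N}-x_{b,N}$, so in either case, whenever one of $x_{a,N},x_{b,N}$ lies in the row space, so does the other. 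Walking outward from $v$ and using Lemma~\ref{lem:path} to telescope the alternating sum of non-dashed rows along the non-dashed portion of any path (with the analogous one-line telescoping for dashed rows), connectivity forces $x_{w,N}$ into the row space for every vertex $w$; thus the row space is all of $\mathbb{C}^{N}$ and the rank equals $N$.

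To finish, I construct $\mathcal{P}'$ by taking $RG(\mathcal{P})$ and recoloring every dashed edge as non-dashed, leaving non-dashed edges and self-loops unchanged. Because $\mathcal{P}$ has height one, Lemma~\ref{lem:nono}(b) forbids any pair $\{i,j\}$ from carrying both a dashed and a non-dashed edge, so the recolored graph is simple; together with the version of Remark~\ref{rem:ht01} allowing self-loops, this shows the recolored graph is $RG(\mathcal{P}')$ for a genuine connected, height-$(0,1)$ type-C poset $\mathcal{P}'$. Indeed its only relations are of the forms $-i\prec j$ and $-i\prec i$, so no nontrivial transitivity relations or $3$-chains arise and $\mathcal{P}'^+$ has height $0$. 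By construction $\mathcal{P}'$ is connected, retains a self-loop, and satisfies $|V(\mathcal{P}')|=|V(\mathcal{P})|$ and $|E(\mathcal{P}')|=|E(\mathcal{P})|$. Applying the claim to both posets gives $\text{rank}(M(\mathcal{P}))=|V(\mathcal{P})|=\text{rank}(M(\mathcal{P}'))$, and combining this with the equality of dimensions yields $\ind\mathfrak{g}_C(\mathcal{P})=\ind\mathfrak{g}_C(\mathcal{P}')$.

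The main obstacle is the rank claim: one must argue cleanly that connectivity propagates membership in the row space across dashed and non-dashed edges simultaneously, anchored by the single coordinate supplied by the self-loop — which is precisely where Lemma~\ref{lem:path} is needed. A secondary point requiring care is verifying that recoloring yields a legitimate height-$(0,1)$ relation graph, namely that self-loops coexisting with non-dashed edges create no transitivity relations and that no multi-edges appear; here Lemma~\ref{lem:nono}(b) and the self-loop extension of Remark~\ref{rem:ht01} are the essential inputs. I note that one could alternatively mirror Proposition~\ref{prop:11t011} by eliminating dashed edges one at a time, replacing a dashed edge $\{u,v\}$ incident to the self-loop vertex $v$ via the rank-preserving substitution $-\mathbf{R_{u,v}(G)}+\mathbf{E_{-v,v}(G)}=\mathbf{R^{\pm}_{u,v}(G)}$; this reaches the same conclusion but must contend with the cycles that a self-loop permits, whereas the direct rank computation avoids that bookkeeping entirely.
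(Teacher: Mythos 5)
Your proposal is correct, and it takes a genuinely different route from the paper. The paper performs the same kind of edge surgery as in Proposition~\ref{prop:11t011}: it removes dashed edges one at a time, each time locating a path of non-dashed edges from the self-loop vertex $p$ to the dashed edge in question and using Lemma~\ref{lem:path} to exhibit an explicit row operation converting $\mathbf{R}(G_{i-1})$ into $\mathbf{R^{\pm}_{p_t,p_{t-1}}}(G_i)$, thereby preserving rank step by step. You instead observe that the self-loop row $-2x_{v,N}$ anchors $x_{v,N}$ in the row space and that every edge row, dashed or not, propagates membership of one endpoint's coordinate vector to the other's, so connectivity forces $\text{rank}(M(G))=|V|$ for \emph{any} connected graph with a self-loop; this lets you recolor all dashed edges at once and compare two full-column-rank matrices rather than tracking an iterative construction. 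Your argument is shorter and avoids the bookkeeping about which paths exist and in what order edges are processed (you do not even need Lemma~\ref{lem:path} --- single-edge propagation plus induction on distance from $v$ suffices). The two points you flag as requiring care are handled correctly and are no worse than in the paper's own proof: Lemma~\ref{lem:nono}(b) rules out parallel dashed/non-dashed edges so the recolored graph is simple, and the paper itself already invokes Remark~\ref{rem:ht01} for a final graph containing a self-loop, so extending that remark to allow self-loops (relations $-i\prec i$ create no transitivity issues in a height-$(0,1)$ poset) is needed by both arguments equally. As a sanity check, your computation gives $\ind\mathfrak{g}_C(\mathcal{P})=|E(\mathcal{P})|-|V(\mathcal{P})|$, which agrees with Theorem~\ref{thm:index1} since a self-loop is an odd cycle.
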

\begin{proof}
Let $|E_D|$ denote the number of dashed edges in $RG(\mathcal{P})$. As in Proposition~\ref{prop:11t011}, we define $\mathcal{P}'$ by constructing $RG(\mathcal{P}')$ from $RG(\mathcal{P})$. Let $G_0=RG(\mathcal{P})$, $N=|V(\mathcal{P})|$, and $p$ denote a vertex of $G_0$ which defines a self-loop.
\bigskip

\noindent
\textbf{Step $i$}: If $G_{i-1}$ contains no dashed edges, then we are done. Otherwise, since $G_{i-1}$ is a connected graph with a self-loop at vertex $p$, there must exist a path defined by the sequence of vertices  $p=p_0,p_1,\hdots,p_t$ such that $\{p_{t-1},p_t\}$ is a dashed edge and $\{p_{l-1},p_{l}\}$ is non-dashed, for $1\le l<t$. Note that it is not necessarily the case that $p_{t-1}>p_t$ since $G_{i-1}$ may not be the relations graph of a type-C poset of height one. Form $G_i$ by replacing the dashed edge $\{p_{t-1},p_t\}$ by a non-dashed edge between the same vertices. Let $\mathbf{R}(G_{i-1})$ denote the row of $M(G_{i-1})$ corresponding to the edge $\{p_{t-1},p_t\}$, i.e., $\mathbf{R(G_{i-1})}=\mathbf{R_{p_{t},p_{t-1}}(G_{i-1})}$ if $p_{t}<p_{t-1}$ and $\mathbf{R(G_{i-1})}=\mathbf{R_{p_{t-1},p_{t}}(G_{i-1})}$ otherwise. Then we can form $M(G_i)$ from $M(G_{i-1})$ by replacing $$\mathbf{R(G_{i-1})}=(1-2\delta_{p_t>p_{t-1}})(x_{p_t,N}-x_{p_{t-1},N})$$ of $M(G_{i-1})$ with
\begin{align*}
    (1-2\delta_{p_t<p_{t-1}})\mathbf{R(G_{i-1})}&+(-1)^{t-1}\mathbf{E_{-p_0,p_0}(G_{i-1})}+2\sum_{i=2}^t(-1)^i\mathbf{R^{\pm}_{p_{t-i},p_{t-i+1}}(G_{i-1})}\\
    &=(x_{p_{t-1},N}-x_{p_t,N})+(-1)^{t-1}(-2x_{p_0,N})+2\sum_{i=2}^t(-1)^i(-x_{p_{t-i},N}-x_{p_{t-i+1},N}) \\
    &=(x_{p_{t-1},N}-x_{p_t,N})+(-1)^{t-1}(-2x_{p_0,N})+2\sum_{j=0}^{t-2}(-1)^{j+t}(-x_{p_{j},N}-x_{p_{j+1},N}) \\
    &=(x_{p_{t-1},N}-x_{p_t,N})+(-1)^{t-1}(-2x_{p_0,N})+2(-1)^{t-1}\sum_{j=0}^{t-2}(-1)^{j+1}(-x_{p_{j},N}-x_{p_{j+1},N}) \\
    &=(x_{p_{t-1},N}-x_{p_t,N})+(-1)^{t-1}(-2x_{p_0,N})+2(-1)^{t-1}(x_{p_0,N}-(-1)^{t-1}x_{p_{t-1},N}) \\
    &=x_{p_{t-1},N}-x_{p_t,N}+2(-1)^tx_{p_0,N}+2(-1)^{t-1}x_{p_0,N}-2x_{p_{t-1},N} \\
    &=-x_{p_{t-1},N}-x_{p_{t},N}\\
    &=\mathbf{R^{\pm}_{p_t,p_{t-1}}(G_{i})},
\end{align*}
where for the fourth equality we applied Lemma~\ref{lem:path}. It follows that $\text{rank}(M(G_{i-1}))=\text{rank}(M(G_{i}))$.
\bigskip

After Step $|E_D|$, each dashed edge of $RG(\mathcal{P})$ has been replaced by a non-dashed edge. Note that, considering Lemma~\ref{lem:nono}, no pair of vertices in $RG(\mathcal{P})$ can be connected by both a dashed and a non-dashed edge. Consequently, considering Remark~\ref{rem:ht01}, $G_{|E_D|}$ corresponds to $RG(\mathcal{P}')$ for some height$-(0,1)$ poset $\mathcal{P}'$ with a self-loop at vertex $p$. Moreover, our work above shows that $$\text{rank}(RG(\mathcal{P}'))=\text{rank}(M(G_{|E_D|}))=\text{rank}(M(G_0))=\text{rank}(RG(\mathcal{P}))$$ and $$\dim\mathfrak{g}_C(\mathcal{P}')=|E(\mathcal{P}')|+|V(\mathcal{P}')|=|E(\mathcal{P})|+|V(\mathcal{P})|=\dim\mathfrak{g}_C(\mathcal{P}).$$ Therefore, $\ind\mathfrak{g}_C(\mathcal{P})=\ind\mathfrak{g}_C(\mathcal{P}')$.
\end{proof}

To handle the cases where $RG(\mathcal{P})$ contains an even or an odd cycle (consisting of more than one edge), we require the following two lemmas.

\begin{lemma}\label{lem:oddcl2}
Let $1\le i_0\neq i_1\neq \cdots\neq i_n\le N$, $L_j=x_{i_j,N}-x_{i_{j+1},N}$ or $-x_{i_j,N}-x_{i_{j+1},N}$ for $0\le j<n$, and $L_n=x_{i_n,N}-x_{i_0,N}$ or $-x_{i_n,N}-x_{i_0,N}$. If $$|\{L_k~|~L_k=-x_{i_j,N}-x_{i_{j+1},N}\text{ or }-x_{i_n,N}-x_{i_0,N}\}|$$ is odd, then there exist constants $c_j\in\{-1,1\}$ such that $\sum_{j=0}^{n}c_jL_j=-2x_{i_0,N}$.
\end{lemma}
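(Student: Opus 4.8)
The plan is to treat $L_0,\dots,L_n$ as the edges of a cycle on the cyclically indexed vertices $i_0,\dots,i_n$ (with $i_{n+1}:=i_0$) and to choose the signs $c_j$ so that the sum telescopes around the cycle. First I would record every edge in a uniform way: write
\[
L_j=a_jx_{i_j,N}-x_{i_{j+1},N},\qquad 0\le j\le n,
\]
where $a_j=1$ when $L_j$ is of the ``dashed'' type $x_{i_j,N}-x_{i_{j+1},N}$ and $a_j=-1$ when $L_j$ is of the ``non-dashed'' type $-x_{i_j,N}-x_{i_{j+1},N}$. In this language the hypothesis that the number of non-dashed edges is odd says exactly that $\prod_{j=0}^n a_j=-1$, since that product equals $-1$ raised to the number of indices $j$ with $a_j=-1$.

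Next I would define the signs by the recurrence $c_0=-a_0$ and $c_k=a_kc_{k-1}$ for $1\le k\le n$, equivalently $c_k=-\prod_{l=0}^k a_l$. Because each $a_l\in\{-1,1\}$, every $c_k$ lies in $\{-1,1\}$, as the lemma demands. The purpose of this choice is to force cancellation at every interior vertex: expanding $\sum_j c_jL_j=\sum_j\big(c_ja_jx_{i_j,N}-c_jx_{i_{j+1},N}\big)$, the coefficient attached to position $k$ is $c_ka_k-c_{k-1}$, where the edge entering position $0$ is $L_n$ so that $c_{-1}$ is read as $c_n$. For $1\le k\le n$ the recurrence gives $c_ka_k-c_{k-1}=a_k^2c_{k-1}-c_{k-1}=0$, while at position $0$ one computes $c_0a_0-c_n=-a_0^2-\big(-\prod_{l=0}^n a_l\big)=-1-1=-2$, using $\prod_{l=0}^n a_l=-1$.

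The final step is a short bookkeeping argument: the coefficient of a fixed basis vector $x_{v,N}$ in $\sum_j c_jL_j$ is the sum of the position-contributions $c_ka_k-c_{k-1}$ over all positions $k$ with $i_k=v$; every interior contribution vanishes and the only nonzero one, namely $-2$, occurs at position $0$, whose index is $i_0$. Hence $\sum_{j=0}^n c_jL_j=-2x_{i_0,N}$, and since the argument is purely formal it needs only the stated consecutive distinctness of the $i_j$. The one genuinely delicate point, and the place I would guard most carefully against a sign error, is the accounting at the ``seam'' of the cycle: it is precisely the oddness of the non-dashed count (that is, $\prod a_j=-1$) that makes $c_0a_0$ and $-c_n$ reinforce to $-2$ rather than cancel to $0$. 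As a sanity check, an even count would make the identical computation return $0$, consistent with even cycles being handled by the companion lemma that follows.
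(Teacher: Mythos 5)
Your proof is correct, and it takes a genuinely different route from the paper's. The paper proves the lemma by induction on $n$ with a three-case analysis: when every edge is of the type $-x_{i_j,N}-x_{i_{j+1},N}$ it invokes the telescoping Lemma~\ref{lem:path} with alternating signs, and otherwise it locates an adjacent mixed pair (in the interior or at the seam) and merges the two vectors into a single one of the form $-x_{i_k,N}-x_{i_{k+2},N}$, shortening the sequence while preserving the parity of the count and then appealing to the inductive hypothesis. You instead give a closed-form construction: writing $L_j=a_jx_{i_j,N}-x_{i_{j+1},N}$ with $a_j\in\{-1,1\}$ and setting $c_k=-\prod_{l=0}^{k}a_l$, the sum telescopes vertex by vertex, every interior contribution $c_ka_k-c_{k-1}$ vanishes by the recurrence, and the seam contributes $-1-c_n=-2$ precisely because the hypothesis is equivalent to $\prod_{j=0}^{n}a_j=-1$. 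Your argument is shorter, produces the signs explicitly rather than existentially, needs no induction or case split, and makes the role of the parity hypothesis transparent; as you note, the identical computation with $\prod a_j=+1$ returns $0$, so it proves Lemma~\ref{lem:evencl} simultaneously. What the paper's route buys is stylistic uniformity with the neighboring lemmas and direct reuse of Lemma~\ref{lem:path}; a minor advantage of your version is that it manifestly requires only consecutive distinctness of the $i_j$, whereas the paper's merging step implicitly assumes the reduced index sequence remains admissible. The one point you flagged as delicate, the sign accounting at the seam, checks out: $c_0a_0-c_n=-a_0^2+\prod_{l=0}^{n}a_l=-1-1=-2$.
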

\begin{proof}
By induction on $n$. If $n=1$, the result is trivial. Assume the result holds for $n-1\ge 0$. There are three cases.
\bigskip

\noindent
\textbf{Case 1:} $L_j=-x_{i_j,N}-x_{i_{j+1},N}$ for $0\le j\le n-1$ and $L_n=-x_{i_n,N}-x_{i_0,N}$. In this case, note that $n$ must be even. Consequently,
\begin{align*}
    L_n+\sum_{j=0}^{n-1}(-1)^jL_j&=(-x_{i_n,N}-x_{i_0,N})+\sum_{j=0}^{n-1}(-1)^j(-x_{i_j,N}-x_{i_{j+1},N}) \\
    &=(-x_{i_n,N}-x_{i_0,N})-\sum_{j=0}^{n-1}(-1)^{j+1}(-x_{i_j,N}-x_{i_{j+1},N}) \\
    &=(-x_{i_n,N}-x_{i_0,N})-(x_{i_0,N}-(-1)^nx_{i_n,N}) \\
    &=-2x_{i_0,N},
\end{align*}
where the third equality follows from Lemma~\ref{lem:path}.
\bigskip

\noindent
\textbf{Case 2:} There exists $0\le k< n-1$ such that $L_k=x_{i_k,N}-x_{i_{k+1},N}$ and $L_{k+1}=-x_{i_{k+1},N}-x_{i_{k+2},N}$, or $L_k=-x_{i_k,N}-x_{i_{k+1},N}$ and $L_{k+1}=x_{i_{k+1},N}-x_{i_{k+2},N}$. Assume that $L_k=x_{i_k,N}-x_{i_{k+1},N}$ and $L_{k+1}=-x_{i_{k+1},N}-x_{i_{k+2},N}$; the other case follows via similar reasoning (replacing subtraction by addition). Note that, in this case, $L_{k+1}+(-1)L_k=-x_{i_k,N}-x_{i_{k+2},N}$. Consequently, applying the induction hypothesis to the sequence of vectors $$L'_j=\begin{cases}
    L_j, & 0\le j<k \\
    L_{k+1}+(-1)L_k, & j=k \\
    L_{j+1}, & k<j\le n-1
\end{cases},$$
for $0\le j\le n-1$, the result follows.
\bigskip

\noindent
\textbf{Case 3:} $L_{n-1}=x_{i_{n-1},N}-x_{i_n,N}$ and $L_n=-x_{i_n,N}-x_{i_0,N}$ or $L_{n-1}=-x_{i_{n-1},N}-x_{i_n,N}$ and $L_n=x_{i_n,N}-x_{i_0,N}$. Assume that $L_{n-1}=x_{i_{n-1},N}-x_{i_n,N}$ and $L_n=-x_{i_n,N}-x_{i_0,N}$; the other case follows via similar reasoning (replacing subtraction by addition). Note that in this case $L_{n}+(-1)L_{n-1}=-x_{i_{n-1},N}-x_{i_0,N}$. Consequently, applying the induction hypothesis to the sequence of vectors $$L'_j=\begin{cases}
    L_j, & 0\le j<n-1 \\
    L_{n}+(-1)L_{n-1}, & j=n-1
\end{cases},$$
for $0\le j\le n-1$, the result follows.
\end{proof}

\begin{lemma}\label{lem:evencl}
Let $1\le i_0\neq i_1\neq\cdots\neq i_n\le N$, $L_j=x_{i_j,N}-x_{i_{j+1},N}$ or $-x_{i_j,N}-x_{i_{j+1},N}$ for $0\le j<n$, and $L_n=x_{i_n,N}-x_{i_0,N}$ or $-x_{i_n,N}-x_{i_0,N}$. If $$|\{L_k~|~L_k=-x_{i_j,N}-x_{i_{j+1},N}\text{ or }-x_{i_n,N}-x_{i_0,N}\}|$$ is even, then there exists constants $c_j\in\{-1,1\}$ such that $\sum_{j=0}^{n}c_jL_j=0$.
\end{lemma}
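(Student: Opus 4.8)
The plan is to exploit the cyclic structure directly. For $0\le j<n$ write $L_j=s_j x_{i_j,N}-x_{i_{j+1},N}$, where $s_j=1$ if $L_j=x_{i_j,N}-x_{i_{j+1},N}$ and $s_j=-1$ if $L_j=-x_{i_j,N}-x_{i_{j+1},N}$, and write $L_n=s_n x_{i_n,N}-x_{i_0,N}$. The point of this bookkeeping is that in every $L_j$ the leading index carries the coefficient $s_j\in\{-1,1\}$ while the trailing index always carries coefficient $-1$, irrespective of type; and the hypothesis that the number of sum-type (that is, $-x-x$) terms is even says precisely that $\prod_{j=0}^n s_j=1$. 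This is the even-parity twin of Lemma~\ref{lem:oddcl2}, and I would set it up so that evenness is exactly the condition under which the sum closes up to $0$ rather than to a nonzero residue.

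Assuming $i_0,\dots,i_n$ are distinct, so that each occurs in exactly two of the $L_j$, I would impose that the coefficient of each $x_{i_k,N}$ in $\sum_{j=0}^n c_j L_j$ vanish. Since $i_k$ (for $1\le k\le n$) is leading in $L_k$ and trailing in $L_{k-1}$, this reads $c_k s_k-c_{k-1}=0$; and for $i_0$ (leading in $L_0$, trailing in $L_n$) it reads $c_0 s_0-c_n=0$. Setting $c_n=1$ and running the recurrence $c_{k-1}=s_k c_k$ downward produces $c_j\in\{-1,1\}$ for all $j$ and annihilates every coefficient except that of $x_{i_0,N}$, which equals $c_0 s_0-c_n=\prod_{j=0}^n s_j-1=0$ by the parity hypothesis. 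Hence $\sum_{j=0}^n c_j L_j=0$, as required.

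To mirror the proof of Lemma~\ref{lem:oddcl2} (and to avoid any reliance on full distinctness of the indices) I would alternatively induct on $n$. The base $n=1$ is immediate: by evenness both edges have the same type, and $c_0=c_1=1$ or $c_0=-c_1=1$ works. For the step, if every $L_j$ is sum-type then $n+1$ is even, hence $n$ is odd, and telescoping via Lemma~\ref{lem:path} (exactly as in Case 1 of Lemma~\ref{lem:oddcl2}) yields the vanishing combination $\sum_{j=0}^{n-1}(-1)^j L_j-L_n=0$. Otherwise some consecutive pair $L_k,L_{k+1}$, or the wrap-around pair, consists of one difference-type and one sum-type term; combining them as in Lemma~\ref{lem:oddcl2} replaces the pair by a single sum-type vector $-x_{i_k,N}-x_{i_{k+2},N}$ of the required form. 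This lowers $n$ by one and leaves the number of sum-type terms even, so the inductive hypothesis applies, and lifting the resulting coefficients back through the combination gives the desired $c_j\in\{-1,1\}$.

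The one genuinely substantive point is the closure of the construction: the recurrence automatically cancels every coefficient along the path $i_1,\dots,i_n$, but whether it also cancels the coefficient of $x_{i_0,N}$ when the cycle returns to its start is governed entirely by the parity of the number of sum-type terms. This is where evenness is essential, and it is the precise dual of Lemma~\ref{lem:oddcl2}, in which odd parity forces $\prod_{j=0}^n s_j=-1$ and leaves the nonzero residue $c_0 s_0-c_n=-2$ on $x_{i_0,N}$, producing $-2x_{i_0,N}$ instead of $0$.
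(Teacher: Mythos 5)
Your first argument is correct and is genuinely different from the paper's proof. The paper proves Lemma~\ref{lem:evencl} by induction on $n$ with four cases: all $L_j$ sum-type (telescoped via Lemma~\ref{lem:path}), all $L_j$ difference-type (telescoped directly), an interior mixed pair, and the terminal mixed pair $L_{n-1},L_n$ --- in the last two cases a mixed pair is merged into a single sum-type vector and the inductive hypothesis is applied. You instead solve the problem in one pass: writing $L_j=s_jx_{i_j,N}-x_{i_{j+1},N}$ with $s_j\in\{-1,1\}$, the positional coefficient of $x_{i_k,N}$ in $\sum_j c_jL_j$ is $c_ks_k-c_{k-1}$ (indices mod $n+1$), so the downward recurrence $c_{k-1}=s_kc_k$ with $c_n=1$ kills every coefficient except the one at $i_0$, which equals $\prod_{j=0}^n s_j-1$ and vanishes exactly when the number of sum-type terms is even. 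This buys three things: no case analysis and no appeal to Lemma~\ref{lem:path}; independence from pairwise distinctness of the $i_k$ (forcing each positional contribution to zero is sufficient even if indices coincide, whereas merging a mixed pair in the inductive route can produce a vector $-2x_{i_k,N}$ not of the admissible form if $i_k=i_{k+2}$); and a uniform explanation of Lemma~\ref{lem:oddcl2}, where odd parity leaves the residue $c_0s_0-c_n=-2$, i.e., $-2x_{i_0,N}$, so both lemmas become one computation of the ``holonomy'' $\prod_j s_j$ around the cycle. One small caveat: your alternative inductive sketch, offered to mirror the paper, has a gap --- ``otherwise some consecutive pair is mixed'' fails when every $L_j$ is difference-type (zero sum-type terms, which is even). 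That case is trivial ($c_j=1$ for all $j$ telescopes to zero, and it is exactly Case~2 of the paper's proof), and your primary argument covers it automatically, but the sketch as written does not.
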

\begin{proof}
By induction on $n$. If $n=1$, the result is trivial. Assume the result holds for $n-1\ge 0$. There are four cases.
\bigskip

\noindent
\textbf{Case 1:} $L_j=-x_{i_j,N}-x_{i_{j+1},N}$ for $0\le j\le n-1$ and $L_n=-x_{i_n,N}-x_{i_0,n}$. In this case, note that $n$ is odd. Consequently,
\begin{align*}
    -L_n-\sum_{j=0}^{n-1}(-1)^{j+1}L_j&=-(-x_{i_n,N}-x_{i_0,N})-\sum_{j=0}^{n-1}(-1)^{j+1}(-x_{i_j,N}-x_{i_{j+1},N}) \\
    &=(x_{i_n,N}+x_{i_0,N})-(x_{i_0,N}-(-1)^nx_{i_n,N}) \\
    &=x_{i_n,N}+x_{i_0,N}-(x_{i_0,N}+x_{i_n,N}) \\
    &=0,
\end{align*}
where the second equality follows from Lemma~\ref{lem:path}.
\bigskip

\noindent
\textbf{Case 2:} $L_j=x_{i_j,N}-x_{i_{j+1},N}$ for $0\le j\le n-1$ and $L_n=x_{i_n,N}-x_{i_0,N}$. In this case, we find that
\begin{align*}
    L_n+\sum_{j=0}^{n-1}L_j&=(x_{i_n,N}-x_{i_0,N})+\sum_{j=0}^{n-1}(x_{i_j,N}-x_{i_{j+1},N}) \\
    &=(x_{i_n,N}-x_{i_0,N})+(x_{i_0,N}-x_{i_n,N}) \\
    &=0.
\end{align*}
\bigskip

\noindent
\textbf{Case 3:} There exists $0\le k< n-1$ such that $L_k=x_{i_k,N}-x_{i_{k+1},N}$ and $L_{k+1}=-x_{i_{k+1},N}-x_{i_{k+2},N}$, or $L_k=-x_{i_k,N}-x_{i_{k+1},N}$ and $L_{k+1}=x_{i_{k+1},N}-x_{i_{k+2},N}$. Assume that $L_k=x_{i_k,N}-x_{i_{k+1},N}$ and $L_{k+1}=-x_{i_{k+1},N}-x_{i_{k+2},N}$; the other case follows via similar reasoning (replacing subtraction by addition). Note that, in this case, $L_{k+1}+(-1)L_k=-x_{i_k,N}-x_{i_{k+2},N}$. Consequently, applying the induction hypothesis to the sequence of vectors 
$$L'_j=\begin{cases}
    L_j, & 0\le j<k \\
    L_{k+1}+(-1)L_k, & j=k\\
    L_{j+1}, & k<j\le n-1
\end{cases}$$
for $0\le j\le n-1$, the result follows.
\bigskip

\noindent
\textbf{Case 4:} $L_{n-1}=x_{i_{n-1},N}-x_{i_n,N}$ and $L_n=-x_{i_n,N}-x_{i_0,N}$ or $L_{n-1}=-x_{i_{n-1},N}-x_{i_n,N}$ and $L_n=x_{i_n,N}-x_{i_0,N}$. Assume that $L_{n-1}=x_{i_{n-1},N}-x_{i_n,N}$ and $L_n=-x_{i_n,N}-x_{i_0,N}$; the other case follows via similar reasoning (replacing addition by subtraction). Note that, in this case, $L_n+(-1)L_{n-1}=-x_{i_{n-1},N}-x_{i_0,N}$. Consequently, applying the induction hypothesis to the sequence of vectors 
$$L'_j=\begin{cases}
    L_j, & 0\le j<n-1 \\
    L_n+(-1)L_{n-1}, & j=n-1
\end{cases}$$
for $0\le j\le n-1$, the result follows.
\end{proof}

\begin{proposition}\label{prop:ocyclet}
Let $\mathcal{P}$ be a connected, non-separable, type-C poset of height $(1,1)$ for which $RG(\mathcal{P})$ contains an odd cycle and no self-loops. Then there exists a connected, type-C poset $\mathcal{P}'$ of height $(0,1)$ such that $|V(\mathcal{P})|=|V(\mathcal{P}')|$, $|E(\mathcal{P})|=|E(\mathcal{P}')|$, $RG(\mathcal{P}')$ contains a self-loop, and $\ind\mathfrak{g}_C(\mathcal{P})=\ind\mathfrak{g}_C(\mathcal{P}')$. 
\end{proposition}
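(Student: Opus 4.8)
The plan is to follow the template of Propositions~\ref{prop:11t011} and~\ref{prop:sloopt}, constructing $RG(\mathcal{P}')$ from $G_0=RG(\mathcal{P})$ through a sequence of moves that, at the level of the matrices $M(G_i)$, amount to rank-preserving row operations, and then invoking equation~\eqref{eqn:index}. The construction proceeds in two phases. In the first phase I use the odd cycle to manufacture a self-loop; in the second phase I clear all remaining dashed edges exactly as in the proof of Proposition~\ref{prop:sloopt}. Since each phase preserves the vertex count, the edge count (a self-loop counting as an edge, so that $\dim\mathfrak{g}_C=|E|+|V|$ is unchanged), and $\text{rank}(M(G_i))$, the resulting height-$(0,1)$ poset $\mathcal{P}'$ will satisfy $\dim\mathfrak{g}_C(\mathcal{P}')=\dim\mathfrak{g}_C(\mathcal{P})$ and $\text{rank}(M(\mathcal{P}'))=\text{rank}(M(\mathcal{P}))$, whence $\ind\mathfrak{g}_C(\mathcal{P}')=\ind\mathfrak{g}_C(\mathcal{P})$ by equation~\eqref{eqn:index}.

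For the first phase, let the odd cycle be supported on distinct vertices $i_0,i_1,\dots,i_n$ (with $i_{n+1}=i_0$), and let $L_0,\dots,L_n$ denote the corresponding rows of $M(G_0)$, so that each $L_j$ equals $x_{i_j,N}-x_{i_{j+1},N}$ for a dashed edge and $-x_{i_j,N}-x_{i_{j+1},N}$ for a non-dashed edge. Because the cycle is odd, the number of non-dashed edges among the $L_j$ is odd, so Lemma~\ref{lem:oddcl2} supplies signs $c_j\in\{-1,1\}$ with $\sum_{j=0}^{n}c_jL_j=-2x_{i_0,N}$, which is precisely the row $\mathbf{E_{-i_0,i_0}(G_1)}$ associated with a self-loop at $i_0$. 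I then form $G_1$ by deleting the cycle edge corresponding to $L_0$ and inserting a non-dashed self-loop at $i_0$; at the matrix level this replaces $L_0$ by $-2x_{i_0,N}$. Since $c_0\in\{-1,1\}$, the identity above can be solved for $L_0$ as a combination of $-2x_{i_0,N}$ and the untouched rows $L_1,\dots,L_n$, so the two row spaces coincide and $\text{rank}(M(G_1))=\text{rank}(M(G_0))$. Deleting one edge of a cycle leaves the graph connected, so $G_1$ is a connected graph carrying a self-loop at $i_0$, and since $\mathcal{P}$ had no self-loops no prohibited parallel structures are introduced.

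For the second phase I repeat the argument of Proposition~\ref{prop:sloopt} essentially verbatim on $G_1$: as long as a dashed edge remains, connectivity together with the self-loop at $i_0$ guarantees a path $p_0=i_0,p_1,\dots,p_t$ whose final edge $\{p_{t-1},p_t\}$ is dashed and whose other edges are non-dashed, and the displayed computation there (which rests on Lemma~\ref{lem:path}) recolors this dashed edge to non-dashed without changing the rank. After finitely many steps no dashed edges remain. Invoking Lemma~\ref{lem:nono} to rule out a pair of vertices joined by both a dashed and a non-dashed edge, Remark~\ref{rem:ht01} identifies the terminal graph as $RG(\mathcal{P}')$ for a connected, height-$(0,1)$ type-C poset $\mathcal{P}'$ retaining the self-loop at $i_0$, with $|V(\mathcal{P}')|=|V(\mathcal{P})|$ and $|E(\mathcal{P}')|=|E(\mathcal{P})|$. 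Combining the two phases then yields the claimed index equality.

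I expect the main obstacle to be the rank bookkeeping of the first phase: one must verify that replacing $L_0$ by the self-loop row is genuinely reversible, so that no rank is lost, which is exactly where the fact that every coefficient $c_j$ in Lemma~\ref{lem:oddcl2} equals $\pm 1$ is essential. A secondary point requiring care is that the intermediate graphs $G_i$ need not be relation graphs of any height-one poset, so all rank arguments must be carried out at the level of graphs and their matrices $M(G_i)$, with the poset $\mathcal{P}'$ recovered only at the very end via Remark~\ref{rem:ht01}.
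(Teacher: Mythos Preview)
Your two-phase strategy is exactly the paper's approach, and the rank bookkeeping you flag as the ``main obstacle'' is fine: since $c_0\in\{-1,1\}$, replacing the row $L_0$ by $-2x_{i_0,N}=\sum_j c_jL_j$ is invertible over the remaining rows, so the row span is preserved. Phase~2 is a correct invocation of the argument of Proposition~\ref{prop:sloopt}.

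The genuine gap is the sentence ``Because the cycle is odd, the number of non-dashed edges among the $L_j$ is odd.'' This is the hypothesis of Lemma~\ref{lem:oddcl2}, and it does \emph{not} follow merely from the cycle having odd length; it requires the height-one structure via Lemma~\ref{lem:nono}. Concretely, a cycle of length three with one dashed edge and two non-dashed edges would have an even count of non-dashed edges, and nothing in your argument excludes it. The paper fills this in as follows: by Lemma~\ref{lem:nono}~(d), two adjacent dashed edges $\{p_s,p_r\},\{p_r,p_t\}$ force $p_r$ to be a local extremum, so any maximal dashed subpath of the cycle is a zigzag; by Lemma~\ref{lem:nono}~(c), where a dashed edge meets a non-dashed edge the shared vertex is the larger endpoint of the dashed edge, so both ends of a maximal dashed subpath are local maxima. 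Together these force every maximal dashed subpath to have an even number of edges, whence the total number of dashed edges is even and the number of non-dashed edges in the odd cycle is odd. You should insert this argument before invoking Lemma~\ref{lem:oddcl2}.

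A minor imprecision: for a dashed edge with $i_j>i_{j+1}$, the actual row of $M(G_0)$ is $x_{i_{j+1},N}-x_{i_j,N}$, i.e., the negative of what you wrote. This is harmless for the rank argument (you are free to take $L_j$ to be $\pm$ the row), but you should say so explicitly, as the paper does.
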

\begin{proof}
Set $N=|V(\mathcal{P})|$ and $G=RG(\mathcal{P})$. Let $\mathcal{C}$ denote an odd cycle of $G$ and assume that $\mathcal{C}$ is defined by the sequence of vertices $p_0,p_1,\hdots,p_n,p_0$. Set 
\begin{align*}
L_j&=\begin{cases}\mathbf{R^{\pm}_{p_j,p_{j+1}}(G)}, & \{p_j,p_{j+1}\}\text{ is non-dashed in }G \\
\mathbf{R_{p_j,p_{j+1}}(G)}, & \{p_j,p_{j+1}\}\text{ is dashed in }G\text{ and }p_j<p_{j+1}\\
-\mathbf{R_{p_{j+1},p_{j}}(G)}, & \{p_j,p_{j+1}\}\text{ is dashed in }G\text{ and }p_{j+1}<p_{j} \\
\end{cases} \\
&=\begin{cases}
-x_{p_j,N}-x_{p_{j+1},N}, & \{p_j,p_{j+1}\}\text{ is non-dashed in }G \\
x_{p_j,N}-x_{p_{j+1},N}, & \{p_j,p_{j+1}\}\text{ is dashed in }G \\
\end{cases}
\end{align*}
for $0\le j\le n-1$ and
\begin{align*}
L_n&=\begin{cases}\mathbf{R^{\pm}_{p_n,p_0}(G)}, & \{p_n,p_0\}\text{ is non-dashed in }G \\
\mathbf{R_{p_n,p_0}(G)}, & \{p_n,p_0\}\text{ is dashed in }G\text{ and }p_n<p_0 \\
-\mathbf{R_{p_0,p_n}(G)}, & \{p_n,p_0\}\text{ is dashed in }G\text{ and }p_0<p_n
\end{cases} \\
&=\begin{cases}
-x_{p_n,N}-x_{p_0,N}, & \{p_0,p_n\}\text{ is non-dashed in }G \\
x_{p_n,N}-x_{p_0,N}, & \{p_0,p_n\}\text{ is dashed in }G. \\
\end{cases}
\end{align*}
Note that by Lemma~\ref{lem:nono} (d), if $\{p_s,p_r\}$ and $\{p_r,p_t\}$ are adjacent dashed edges of $C,$ then either $p_r<p_s,p_t$ or $p_r>p_s,p_t.$ It then follows that $\mathcal{C}$ must contain at least one non-dashed edge since $\mathcal{C}$ is defined by an odd number of vertices. Further, by Lemma~\ref{lem:nono} (c), if $\{p_s,p_r\}$ is a dashed edge of $\mathcal{C}$ and $\{p_r,p_t\}$ is a non-dashed edge of $C,$ then $p_r<p_s.$ Consequently, each path in $\mathcal{C}$ that consists entirely of dashed edges and is maximal under containment contains an odd number of vertices, i.e., an even number of edges. Since $\mathcal{C}$ is defined by an odd number of edges, it follows that $|\{L_j~|~L_j=-x_{p_j,N}-x_{p_{j+1},N}\text{ or }-x_{p_n,N}-x_{p_0,N}\}|$ is odd, and Lemma~\ref{lem:oddcl2} implies that there exist constants $c_j\in\{-1,1\}$ such that $$\sum_{j=0}^nc_jL_j=-2x_{p_0,N}.$$ Denote by $G'$ the graph formed from $G$ by removing the dashed edge $\{p_0,p_n\}$ and adding a self-loop at vertex $p_0$. Let $\mathbf{R(G)}$ denote the row of $M(G)$ corresponding to $\{p_0,p_n\}$, i.e., $\mathbf{R(G)}=\mathbf{R_{p_0,p_n}(G)}$ if $p_0<p_n$ and $\mathbf{R(G)}=\mathbf{R_{p_n,p_0}(G)}$ otherwise. Since one can form $M(G')$ from $M(G)$ by replacing $\mathbf{R(G)}$ with $$\sum_{j=0}^nc_jL_j=-2x_{p_0,N}=\mathbf{E_{-p_0,p_0}(G')},$$ it follows that $\text{rank}(M(G'))=\text{rank}(M(G))$. Now, applying a recursive argument similar to that in the proof of Proposition~\ref{prop:sloopt}, the result follows.
\end{proof}


\begin{proposition}\label{prop:evencl}
Let $\mathcal{P}$ be a connected, non-separable, type-C poset of height $(1,1)$ for which $RG(\mathcal{P})$ contains an even cycle and no odd cycles. Then there exists a connected, type-C poset $\mathcal{P}'$ of height $(0,1)$ such that $RG(\mathcal{P}')$ is a tree, $|V(\mathcal{P})|=|V(\mathcal{P}')|$, and $\ind\mathfrak{g}_C(\mathcal{P})=\ind\mathfrak{g}_C(\mathcal{P}')+|E(\mathcal{P})|-|V(\mathcal{P})|+1$. 
\end{proposition}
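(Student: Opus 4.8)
The plan is to show that passing from $RG(\mathcal{P})$ to a suitably chosen spanning tree leaves $\text{rank}(M(\mathcal{P}))$ unchanged, then to flatten that tree to height $(0,1)$ using Proposition~\ref{prop:11t011}; the claimed identity will fall out of the dimension bookkeeping in~\eqref{eqn:index}. Throughout I set $N=|V(\mathcal{P})|$ and $G=RG(\mathcal{P})$.

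First I would fix a spanning tree $T$ of $G$ chosen to contain at least one non-dashed edge; this is possible because $\mathcal{P}$ is connected and non-separable, so $G$ has a non-dashed edge, which can be grown into a spanning tree. Deleting relations from a height-one type-C poset again yields a height-one type-C poset (antisymmetry and reflexivity are inherited, and no transitivity relation can be violated in height one, while each deleted edge corresponds to a symmetric pair of relations, preserving condition~2 of Definition~\ref{def:BCDposet}). Hence $T=RG(\mathcal{P}_T)$ for a connected, non-separable, height-one type-C poset $\mathcal{P}_T$ with $|V(\mathcal{P}_T)|=N$ and $|E(\mathcal{P}_T)|=N-1$.

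The key step is the rank equality $\text{rank}(M(T))=\text{rank}(M(\mathcal{P}))$. Each non-tree edge $e$ determines a fundamental cycle $\mathcal{C}_e$ in $T+e$, and since $G$ has no odd cycles, $\mathcal{C}_e$ is a graph-even cycle. Exactly as in the proof of Proposition~\ref{prop:ocyclet}, Lemma~\ref{lem:nono}(c),(d) force every maximal all-dashed subpath of $\mathcal{C}_e$ to have an even number of edges; consequently the number of dashed edges of $\mathcal{C}_e$ is even, and therefore so is the number of non-dashed edges. Thus Lemma~\ref{lem:evencl} (rather than Lemma~\ref{lem:oddcl2}) applies to the rows $L_j$ of $M(\mathcal{P})$ associated, with the signs of Proposition~\ref{prop:ocyclet}, to the edges of $\mathcal{C}_e$: there are $c_j\in\{-1,1\}$ with $\sum_j c_jL_j=0$. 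Since the row of $e$ occurs with coefficient $\pm 1$, it lies in the span of the rows coming from the tree edges of $\mathcal{C}_e$. As this holds for every non-tree edge and every fundamental cycle reuses only tree rows, the full row space of $M(\mathcal{P})$ coincides with the span of the tree rows, giving $\text{rank}(M(\mathcal{P}))=\text{rank}(M(T))=\text{rank}(M(\mathcal{P}_T))$.

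Finally, if $\mathcal{P}_T$ has no dashed edges it already has height $(0,1)$ and I set $\mathcal{P}'=\mathcal{P}_T$; otherwise $\mathcal{P}_T$ is a connected, non-separable, height-$(1,1)$ poset whose relation graph is a tree, and Proposition~\ref{prop:11t011} produces a connected, height-$(0,1)$ type-C poset $\mathcal{P}'$ with $RG(\mathcal{P}')$ a tree, $|V(\mathcal{P}')|=|V(\mathcal{P}_T)|=N$, and $\ind\mathfrak{g}_C(\mathcal{P}')=\ind\mathfrak{g}_C(\mathcal{P}_T)$, hence $\text{rank}(M(\mathcal{P}'))=\text{rank}(M(\mathcal{P}))$. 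Using $\dim\mathfrak{g}_C(\mathcal{P})=|E(\mathcal{P})|+|V(\mathcal{P})|$, the fact that $\mathcal{P}'$ is a tree (so $|E(\mathcal{P}')|=N-1$), and~\eqref{eqn:index}, I obtain
$$\ind\mathfrak{g}_C(\mathcal{P})-\ind\mathfrak{g}_C(\mathcal{P}')=\big(|E(\mathcal{P})|+|V(\mathcal{P})|\big)-\big(2|V(\mathcal{P})|-1\big)=|E(\mathcal{P})|-|V(\mathcal{P})|+1,$$
which is the assertion. I expect the main obstacle to be the parity bookkeeping: verifying that each fundamental cycle carries an even number of non-dashed edges so that Lemma~\ref{lem:evencl} is the applicable lemma, together with the care needed to keep every intermediate object a genuine height-one type-C poset so that Lemma~\ref{lem:nono} and Proposition~\ref{prop:11t011} remain in force.
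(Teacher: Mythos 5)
Your proposal is correct and follows essentially the same route as the paper: both arguments use Lemma~\ref{lem:nono} to establish that every cycle of $RG(\mathcal{P})$ carries an even number of non-dashed edges, invoke Lemma~\ref{lem:evencl} to show that deleting a cycle edge preserves $\operatorname{rank}(M(\cdot))$, reduce to a tree containing a non-dashed edge, and finish via Proposition~\ref{prop:11t011} and the dimension count in~\eqref{eqn:index}. The only difference is organizational --- you fix a spanning tree and handle all $|E(\mathcal{P})|-|V(\mathcal{P})|+1$ excess edges at once via fundamental cycles, whereas the paper removes one edge from an even cycle at a time --- which does not change the substance of the argument.
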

\begin{proof}
Set $G=RG(\mathcal{P})$ and $N=|V(\mathcal{P})|$. Let $\mathcal{C}$ denote an even cycle in $G$ defined by the sequence of vertices $p_0,p_1,\hdots,p_n,p_0$. Since $\mathcal{P}$ is non-separable, there must exist a non-dashed edge $e$ in $G$. Assume that $e\neq \{p_0,p_n\}$. Note that this does not imply that $\{p_0,p_n\}$ is a dashed edge. Let $G'$ denote the graph formed from $G$ by removing the edge $\{p_0,p_n\}$. We claim that $\text{rank}(M(G))=\text{rank}(M(G'))$. To see this, set 
\begin{align*}
L_j&=\begin{cases}\mathbf{R^{\pm}_{p_j,p_{j+1}}(G)}, & \{p_j,p_{j+1}\}\text{ is non-dashed in }G \\
\mathbf{R_{p_j,p_{j+1}}(G)}, & \{p_j,p_{j+1}\}\text{ is dashed in }G\text{ and }p_j<p_{j+1}\\
-\mathbf{R_{p_{j+1},p_{j}}(G)}, & \{p_j,p_{j+1}\}\text{ is dashed in }G\text{ and }p_{j+1}<p_{j} \\
\end{cases} \\
&=\begin{cases}
-x_{p_j,N}-x_{p_{j+1},N}, & \{p_j,p_{j+1}\}\text{ is non-dashed in }G; \\
x_{p_j,N}-x_{p_{j+1},N}, & \{p_j,p_{j+1}\}\text{ is dashed in }G, \\
\end{cases}
\end{align*}
for $0\le j\le n-1$, and
\begin{align*}
L_n&=\begin{cases}\mathbf{R^{\pm}_{p_n,p_0}(G)}, & \{p_n,p_0\}\text{ is non-dashed in }G \\
\mathbf{R_{p_n,p_0}(G)}, & \{p_n,p_0\}\text{ is dashed in }G\text{ and }p_n<p_0 \\
-\mathbf{R_{p_0,p_n}(G)}, & \{p_n,p_0\}\text{ is dashed in }G\text{ and }p_0<p_n
\end{cases} \\
&=\begin{cases}
-x_{p_n,N}-x_{p_0,N}, & \{p_0,p_n\}\text{ is non-dashed in }G; \\
x_{p_n,N}-x_{p_0,N}, & \{p_0,p_n\}\text{ is dashed in }G; \\
\end{cases}
\end{align*}
Arguing as in the proof of Proposition~\ref{prop:ocyclet}, we invoke Lemma~\ref{lem:nono} (c) and (d) to find that $\mathcal{C}$ must contain an even number of non-dashed edges, i.e., $|\{L_j~|~L_j=-x_{p_j,N}-x_{p_{j+1},N}\text{ or }-x_{p_n,N}-x_{p_0,N}\}|$ is even. Thus, applying Lemma~\ref{lem:evencl}, there exist constants $c_j\in\{-1,1\}$ such that $$\sum_{j=0}^nc_jL_j=0.$$ Let $\mathbf{R(G)}$ denote the row of $M(G)$ corresponding to $\{p_0,p_n\}$, i.e., $\mathbf{R(G)}=\mathbf{R_{p_0,p_n}(G)}$ if $p_0<p_n$ and $\mathbf{R(G)}=\mathbf{R_{p_n,p_0}(G)}$ otherwise. Since one can form $M(G')$ with an additional zero row from $M(G)$ by replacing $\mathbf{R(G)}$ with $$\sum_{j=0}^nc_jL_j=0,$$ it follows that $\text{rank}(M(G'))=\text{rank}(M(G))$, as claimed.

Now, since $G$ is a finite graph with only even cycles, it is possible to form a tree $G''$ from $G$ by recursively removing edges from even cycles. Moreover, considering our construction of $G'$ from $G$ given above, one can do so in such a way that $G''$ contains a non-dashed edge. Since $G$ has $|E(\mathcal{P})|$ edges and $|V(\mathcal{P})|$ vertices, it follows that one must remove $|E(\mathcal{P})|-|V(\mathcal{P})|+1$ edges to form $G''$. Considering our work above, it follows that $\text{rank}(M(\mathcal{P}))=\text{rank}(M(G''))$. Now, applying an argument similar to that of the proof of Proposition~\ref{prop:11t011}, we find that there exists a connected, type-C poset $\mathcal{P}'$ of height $(0,1)$ such that $RG(\mathcal{P}')$ is a tree, $|V(\mathcal{P}')|=|V(G'')|=|V(\mathcal{P})|$, $|E(\mathcal{P}')|=|E(G'')|$, and $\text{rank}(M(\mathcal{P}'))=\text{rank}(M(G''))=\text{rank}(M(\mathcal{P}))$. Thus, we have
\begin{align*}
    \ind\mathfrak{g}_C(\mathcal{P})&=|E(\mathcal{P})|+|V(\mathcal{P})|-2~\text{rank}(M(\mathcal{P})) \\
    &=|E(\mathcal{P})|+|V(\mathcal{P})|-2~\text{rank}(M(\mathcal{P}')) \\
    &=\big[|E(\mathcal{P}')|+|V(\mathcal{P}')|-2~\text{rank}(M(\mathcal{P}'))\big]+|E(\mathcal{P})|-|V(\mathcal{P})|+1 \\
    &=\ind\mathfrak{g}_C(\mathcal{P}')+|E(\mathcal{P})|-|V(\mathcal{P})|+1.
\end{align*}
The result follows.
\end{proof}

We are now in a position to prove Theorem~\ref{thm:index1}. Note that, considering Theorem~\ref{thm:disjoint}, it suffices to consider the case where $\mathcal{P}$ is connected.

\begin{theorem}\label{thm:index1c}
Let $\mathcal{P}$ be a connected, type-C poset of height one and $\mathfrak{g}=\mathfrak{g}_C(\mathcal{P})$. Then $$\ind\mathfrak{g}=|E(\mathcal{P})|-|V(\mathcal{P})|+2\delta_o,$$ where $\delta_o$ is the indicator function for $RG(\mathcal{P})$ containing no odd cycles.
\end{theorem}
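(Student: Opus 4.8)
The plan is to prove the formula by reducing every connected height-one poset to the height-$(0,1)$ case, where Theorem~\ref{thm:index01} applies, and then tracking how the reduction affects the quantity $|E(\mathcal{P})|-|V(\mathcal{P})|+2\delta_o$. Since $\mathcal{P}$ is connected, $RG(\mathcal{P})$ has a single connected component, so the number $\eta(\mathcal{P}')$ appearing in Theorem~\ref{thm:index01} is always $0$ or $1$ and coincides with the indicator $\delta_o$ of the relevant poset. I would organize the argument according to whether $\mathcal{P}$ has height $(0,1)$ or $(1,1)$, and in the latter case whether $\mathcal{P}$ is separable.

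First the two easy cases. If $\mathcal{P}$ has height $(0,1)$, then Theorem~\ref{thm:index01} applies directly; connectedness gives $\eta(\mathcal{P})=\delta_o$, and the formula follows immediately. If $\mathcal{P}$ has height $(1,1)$ and is separable, then Theorem~\ref{thm:indsep} gives $\ind\mathfrak{g}=|E(\mathcal{P})|-|V(\mathcal{P})|+2$, so it remains to check that $\delta_o=1$. The key observation is that separability means $Rel_{\pm}(\mathcal{P})=\emptyset$, so $RG(\mathcal{P})$ has neither non-dashed edges nor self-loops, i.e.\ it consists entirely of dashed edges. By Lemma~\ref{lem:nono}~(d) the interior vertex of any two adjacent dashed edges is a local extremum, so traversing an all-dashed cycle forces its vertices to alternate between local maxima and minima; this alternation is possible only around a cycle of even length. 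Hence $RG(\mathcal{P})$ contains no odd cycle, $\delta_o=1$, and the formula holds.

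The substantive case is $\mathcal{P}$ connected, non-separable, of height $(1,1)$, and here I would split into four subcases matching Propositions~\ref{prop:11t011}--\ref{prop:evencl}, each of which produces a connected height-$(0,1)$ poset $\mathcal{P}'$ with $|V(\mathcal{P}')|=|V(\mathcal{P})|$. If $RG(\mathcal{P})$ is a tree, Proposition~\ref{prop:11t011} gives a tree $RG(\mathcal{P}')$ with the same edge and vertex counts and equal index; since $\delta_o=1$ for both, applying Theorem~\ref{thm:index01} to $\mathcal{P}'$ yields $|E(\mathcal{P})|-|V(\mathcal{P})|+2$. If $RG(\mathcal{P})$ has a self-loop (resp.\ an odd cycle and no self-loop), Proposition~\ref{prop:sloopt} (resp.\ Proposition~\ref{prop:ocyclet}) produces $\mathcal{P}'$ with a self-loop, the same counts, and equal index; the self-loop is an odd cycle, so $\eta(\mathcal{P}')=0$ and Theorem~\ref{thm:index01} gives $\ind\mathfrak{g}=|E(\mathcal{P})|-|V(\mathcal{P})|$, matching $\delta_o=0$. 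Finally, if $RG(\mathcal{P})$ has an even cycle and no odd cycle, Proposition~\ref{prop:evencl} gives a tree $\mathcal{P}'$ with $\ind\mathfrak{g}_C(\mathcal{P}')=1$ (by Theorem~\ref{thm:index01}, since a connected tree has $|E(\mathcal{P}')|=|V(\mathcal{P}')|-1$ and $\eta=1$) together with $\ind\mathfrak{g}_C(\mathcal{P})=\ind\mathfrak{g}_C(\mathcal{P}')+|E(\mathcal{P})|-|V(\mathcal{P})|+1=|E(\mathcal{P})|-|V(\mathcal{P})|+2$, again matching $\delta_o=1$. These four subcases are exhaustive because a connected graph that is not a tree contains a cycle, which, absent self-loops, is either odd or even.

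Most of the real work is already encapsulated in the rank-preserving edge surgeries of Propositions~\ref{prop:11t011}--\ref{prop:evencl}, so the proof itself is largely bookkeeping. The step I would be most careful about is confirming that the odd-cycle indicator is correctly transported through each reduction: the self-loop created in the self-loop and odd-cycle cases must be recognized as keeping $\delta_o=0$, while the tree produced in the tree and even-cycle cases must be recognized as giving $\delta_o=1$, and one must check that the even-cycle reduction contributes exactly the correction term $|E(\mathcal{P})|-|V(\mathcal{P})|+1$ needed to land on $+2$. The only genuinely combinatorial input beyond the propositions is the alternation argument via Lemma~\ref{lem:nono}~(d) used to rule out odd cycles in the separable case.
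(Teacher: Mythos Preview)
Your proposal is correct and follows essentially the same approach as the paper. The only cosmetic difference is in how the cases are labeled: the paper partitions according to whether $RG(\mathcal{P})$ has only non-dashed edges, only dashed edges, or both (and then by cycle structure), whereas you partition by height $(0,1)$ versus $(1,1)$ and separable versus non-separable; these partitions coincide, and both proofs invoke Theorems~\ref{thm:index01} and~\ref{thm:indsep} together with Propositions~\ref{prop:11t011}--\ref{prop:evencl} and Lemma~\ref{lem:nono}(d) in exactly the same way.
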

\begin{proof}
There are five cases.
\bigskip

\noindent
\textbf{Case 1:} $RG(\mathcal{P})$ contains no dashed edge. In this case, the result follows from Theorem~\ref{thm:index01}.
\bigskip

\noindent
\textbf{Case 2:} $RG(\mathcal{P})$ contains no non-dashed edge, i.e., $\mathcal{P}$ is separable. In this case, applying Theorem~\ref{thm:indsep}, we find that
\begin{align*}
    \ind\mathfrak{g}_C(\mathcal{P})&=|E(\mathcal{P})|-|V(\mathcal{P})|+2 \\
    &=|E(\mathcal{P})|-|V(\mathcal{P})|+2\delta_0,
\end{align*}
where we have used the fact that $RG(\mathcal{P})$ cannot contain an odd cycle by Lemma~\ref{lem:nono} (d).
\bigskip

\noindent
\textbf{Case 3:} $RG(\mathcal{P})$ contains an odd cycle as well as both dashed and non-dashed edges. In this case, applying either Proposition~\ref{prop:sloopt} or Proposition~\ref{prop:ocyclet}, it follows that there exists a poset $\mathcal{P}'$ of height $(0,1)$ for which $RG(\mathcal{P}')$ contains an odd cycle, $|E(\mathcal{P})|=|E(\mathcal{P}')|$, $|V(\mathcal{P})|=|V(\mathcal{P}')|$, and $\ind\mathfrak{g}_C(\mathcal{P})=\ind\mathfrak{g}_C(\mathcal{P}')$. Now, by Theorem~\ref{thm:index01}, we have
\begin{align*}
    \ind\mathfrak{g}_C(\mathcal{P})&=\ind\mathfrak{g}_C(\mathcal{P}') \\
    &=|E(\mathcal{P}')|-|V(\mathcal{P}')| \\
    &=|E(\mathcal{P})|-|V(\mathcal{P})| \\
    &=|E(\mathcal{P})|-|V(\mathcal{P})|+2\delta_o,
\end{align*}
where we have used the fact that $RG(\mathcal{P})$ contains an odd cycle, i.e, $\delta_o=0$.
\bigskip

\noindent
\textbf{Case 4}: $RG(\mathcal{P})$ is a tree that contains both dashed and non-dashed edges. In this case, applying Proposition~\ref{prop:11t011} it follows that there exists a poset $\mathcal{P}'$ of height $(0,1)$ for which $RG(\mathcal{P}')$ is a tree, $|E(\mathcal{P})|=|E(\mathcal{P}')|$, $|V(\mathcal{P})|=|V(\mathcal{P}')|$, and $\ind\mathfrak{g}_C(\mathcal{P})=\ind\mathfrak{g}_C(\mathcal{P}')$. Now, by Theorem~\ref{thm:index01}, we have
\begin{align*}
\ind\mathfrak{g}_C(\mathcal{P})&=\ind\mathfrak{g}_C(\mathcal{P}') \\ &=|E(\mathcal{P}')|-|V(\mathcal{P}')|+2 \\
    &=|E(\mathcal{P})|-|V(\mathcal{P})|+2 \\
    &=|E(\mathcal{P})|-|V(\mathcal{P})|+2\delta_o,
\end{align*}
where we have used the fact that $RG(\mathcal{P})$ contains no odd cycles, i.e., $\delta_o=1$.
\bigskip

\noindent
\textbf{Case 5:} $RG(\mathcal{P})$ contains an even cycle, no odd cycles, and both dashed and non-dashed edges. In this case, applying Proposition~\ref{prop:evencl} it follows that there exists a poset $\mathcal{P}'$ of height $(0,1)$ for which $RG(\mathcal{P}')$ is a tree, $|V(\mathcal{P})|=|V(\mathcal{P}')|$, and $\ind\mathfrak{g}_C(\mathcal{P})=\ind\mathfrak{g}_C(\mathcal{P}')+|E(\mathcal{P})|-|V(\mathcal{P})|+1$. Note that since $RG(\mathcal{P}')$ is a tree with $|V(\mathcal{P}')|$ vertices, it follows that $|E(\mathcal{P}')|=|V(\mathcal{P}')|-1$. Now, by Theorem~\ref{thm:index01}, we have
\begin{align*}
    \ind\mathfrak{g}_C(\mathcal{P})&=\ind\mathfrak{g}_C(\mathcal{P}')+|E(\mathcal{P})|-|V(\mathcal{P})|+1 \\
    &=|E(\mathcal{P}')|-|V(\mathcal{P}')|+2+|E(\mathcal{P})|-|V(\mathcal{P})|+1 \\
    &=|V(\mathcal{P}')|-1-|V(\mathcal{P}')|+2+|E(\mathcal{P})|-|V(\mathcal{P})|+1 \\
    &=|E(\mathcal{P})|-|V(\mathcal{P})|+2 \\
    &=|E(\mathcal{P})|-|V(\mathcal{P})|+2\delta_o,
\end{align*}
where we have used the fact that $RG(\mathcal{P})$ contains no odd cycles, i.e., $\delta_o=1$.
\end{proof}

As noted above, combining Theorems~\ref{thm:disjoint} and~\ref{thm:index1c} establishes Theorem~\ref{thm:index1}. Using Theorem~\ref{thm:index1}, the characterization of Frobenius, type-C Lie poset algebras provided by Theorem 49 in \textbf{\cite{BCD}} can be extended mutatis mutandis.

\begin{theorem}\label{thm:frob}
If $\mathcal{P}$ is a type-C poset of height one, then $\mathfrak{g}_C(\mathcal{P})$ is Frobenius if and only if each connected component of $RG(\mathcal{P})$ contains a single cycle which consists of an odd number of vertices.
\end{theorem}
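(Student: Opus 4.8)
The plan is to characterize when $\mathfrak{g}_C(\mathcal{P})$ is Frobenius by invoking the index formula established in Theorem~\ref{thm:index1} together with Theorem~\ref{thm:disjoint}, which reduces the index computation to a sum over connected components of $RG(\mathcal{P})$. Recall that a Lie algebra is \emph{Frobenius} precisely when its index is zero. Since Theorem~\ref{thm:disjoint} gives $\ind\mathfrak{g}_C(\mathcal{P})=\sum_{i=1}^n\ind\mathfrak{g}_C(\mathcal{P}_{K_i})$ over the connected components $K_1,\dots,K_n$ of $RG(\mathcal{P})$, and each summand $\ind\mathfrak{g}_C(\mathcal{P}_{K_i})$ is nonnegative (being the dimension of a kernel), the whole index vanishes if and only if \emph{each} component has index zero. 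Thus it suffices to determine exactly which connected $RG(\mathcal{P})$ yield $\ind\mathfrak{g}_C(\mathcal{P})=0$, and this is what I would attack first.

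For a single connected component, Theorem~\ref{thm:index1} (equivalently Theorem~\ref{thm:index1c}) gives $\ind\mathfrak{g}_C(\mathcal{P})=|E(\mathcal{P})|-|V(\mathcal{P})|+2\delta_o$, where $\delta_o$ is the indicator that $RG(\mathcal{P})$ contains no odd cycle. The next step is a short case analysis on $\delta_o$. If $\delta_o=1$ (no odd cycles), then index zero forces $|E(\mathcal{P})|-|V(\mathcal{P})|+2=0$, i.e. $|E(\mathcal{P})|=|V(\mathcal{P})|-2$; but a connected graph on $|V|$ vertices has at least $|V|-1$ edges, so this is impossible and no such component is Frobenius. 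If $\delta_o=0$ (an odd cycle is present), then index zero forces $|E(\mathcal{P})|=|V(\mathcal{P})|$. A connected graph satisfying $|E|=|V|$ is precisely a graph containing exactly one cycle (a ``unicyclic'' graph, i.e. a tree plus one extra edge); combined with $\delta_o=0$, that single cycle must be odd. This pins down the connected Frobenius case as exactly: a single cycle consisting of an odd number of vertices (with tree branches attached, which do not affect the edge count relation).

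Assembling these pieces, I would argue that $\mathfrak{g}_C(\mathcal{P})$ is Frobenius if and only if every connected component of $RG(\mathcal{P})$ satisfies $|E|=|V|$ and contains an odd cycle, which by the above is equivalent to each component containing a single cycle with an odd number of vertices. The one subtlety to address carefully is the meaning of ``contains a single cycle'': a self-loop (possible only in type C, per the earlier remark) is a cycle of length one, hence an odd cycle on a single vertex, and the formula $|E|=|V|$ accommodates it correctly, so the statement should be read to include this boundary case. I expect the main obstacle to be purely expository rather than mathematical: making precise the graph-theoretic fact that a connected graph with $|E|=|V|$ has exactly one cycle, and confirming that the edge-count equality is insensitive to pendant tree-branches hanging off the cycle, so that the characterization depends only on the presence of a single cycle of odd length. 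The Lie-theoretic content is entirely carried by Theorems~\ref{thm:disjoint} and~\ref{thm:index1}, so the proof reduces to this elementary combinatorial bookkeeping.
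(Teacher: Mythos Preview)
Your proposal is correct and follows the same approach the paper indicates: the paper simply states that Theorem~\ref{thm:frob} follows from the extended index formula of Theorem~\ref{thm:index1} by the same argument as Theorem~49 of \textbf{\cite{BCD}} (``mutatis mutandis''), and what you have written is precisely that argument spelled out in full. Your decomposition via Theorem~\ref{thm:disjoint}, the case split on $\delta_o$, and the elementary graph-theoretic identification of connected graphs with $|E|=|V|$ as unicyclic (including the self-loop boundary case) are exactly the ingredients needed.
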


In the next section, we use Theorem~\ref{thm:index1} to help characterize type-C posets of height one for which $\mathfrak{g}_C(\mathcal{P})$ is contact.

\section{Contact Posets}\label{sec:con}

In this section, we characterize those type-B, C, and D posets of height one which correspond to contact Lie poset algebras. Ongoing, we refer to such posets as ``contact posets". As in Section~\ref{sec:indf}, for the sake of brevity, all results will concern type-C Lie poset algebras; considering Theorem~\ref{thm:onlyC}, though, all results still apply with ``type-C" replaced by ``type-B" or ``type-D". The main result of this section is Theorem~\ref{thm:contactchar} below.

\begin{theorem}\label{thm:contactchar}
Let $\mathcal{P}$ be a type-C poset of height one. Then $\mathcal{P}$ is contact if and only if
\begin{itemize}
    \item exactly one connected component of $RG(\mathcal{P})$ is a tree, and
    \item all remaining connected components contain a single cycle which consists of an odd number of vertices.
\end{itemize}
\end{theorem}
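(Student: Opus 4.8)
The plan is to combine the contact criterion, the index formula of Theorem~\ref{thm:index1}, the decomposition of Theorem~\ref{thm:disjoint}, and the Frobenius characterization of Theorem~\ref{thm:frob}. Recall that a Lie algebra is contact precisely when it is odd-dimensional and has index one. So I would reduce the statement to two conditions on $\mathfrak{g}_C(\mathcal{P})$: that $\dim\mathfrak{g}_C(\mathcal{P})$ is odd and that $\ind\mathfrak{g}_C(\mathcal{P})=1$. The strategy is to express both quantities combinatorially via $RG(\mathcal{P})$ and then determine exactly which connected-component configurations achieve index one together with odd dimension.

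First I would write $\dim\mathfrak{g}_C(\mathcal{P})=|V(\mathcal{P})|+|E(\mathcal{P})|$, which follows from the basis $\mathscr{B}_C(\mathcal{P})$ (one diagonal element $D_i$ per vertex and one off-diagonal basis element per edge). Next, writing $RG(\mathcal{P})$ as a disjoint union of connected components $K_1,\dots,K_n$, Theorem~\ref{thm:disjoint} gives $\ind\mathfrak{g}_C(\mathcal{P})=\sum_{t=1}^n\ind\mathfrak{g}_C(\mathcal{P}_{K_t})$, and Theorem~\ref{thm:index1c} gives, for each connected component, the value $\ind\mathfrak{g}_C(\mathcal{P}_{K_t})=|E(K_t)|-|V(K_t)|+2\delta_o(K_t)$. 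Since every connected component satisfies $|E(K_t)|\ge|V(K_t)|-1$ with equality iff $K_t$ is a tree, one checks the per-component index is always $\ge 0$: a tree contributes $-1+2=1$ (it has no odd cycle, so $\delta_o=1$); a component with exactly one cycle that is odd contributes $0+0=0$; a component with exactly one cycle that is even contributes $0+2=2$; and a component with $c\ge 2$ independent cycles contributes at least $c-2\cdot 0\ge 0$ when odd cycles are present, or $(c)+2$ when no odd cycle exists. The key point is that the only way to get a component contributing $0$ is a single odd cycle, the only way to get exactly $1$ is a tree, and every other configuration contributes $\ge 2$. Hence $\ind\mathfrak{g}_C(\mathcal{P})=1$ forces exactly one tree component and all others to be single-odd-cycle components; conversely that configuration yields total index $1+0+\cdots+0=1$.

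It then remains to reconcile the index-one condition with odd-dimensionality. Here I would invoke Theorem~\ref{thm:frob}: a type-C Lie poset algebra is Frobenius (index zero) iff every component is a single odd cycle, and such algebras are necessarily even-dimensional. So I would argue that adjoining a single tree component to an otherwise-Frobenius configuration shifts both the index (from $0$ to $1$) and the parity of the dimension, and verify that the resulting algebra is automatically odd-dimensional whenever the index is one. Concretely, for a single odd-cycle component $K_t$ one has $|V(K_t)|+|E(K_t)|=2|V(K_t)|$, which is even; and for the tree component $|V|+|E|=2|V|-1$, which is odd. Summing over all components, exactly one odd summand appears, so the total dimension is odd precisely when the configuration is one tree plus single-odd-cycle components. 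Thus index one and odd dimension are simultaneously equivalent to the stated configuration, and the biconditional follows.

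The main obstacle I anticipate is the careful bookkeeping in the forward direction: showing that any component which is neither a tree nor a single odd cycle contributes at least $2$ to the index, so that no ``compensation'' across components can produce total index one except in the asserted configuration. This requires treating components with multiple independent cycles (where $|E|-|V|+1\ge 2$) and even-cycle-only components separately, and confirming in each case the per-component contribution is $\ge 2$, which rules out any alternative. Once that monotonicity-type bound is pinned down, the parity computation that guarantees odd dimension is a short and purely arithmetic check that dovetails with the Frobenius result.
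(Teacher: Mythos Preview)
Your argument has a genuine gap: you treat ``contact'' as synonymous with ``odd-dimensional of index one,'' but this equivalence is false in general, and it fails precisely for the algebras at hand. Index one is necessary for contact, not sufficient, and the paper devotes most of Section~\ref{sec:con} to ruling out index-one configurations that are \emph{not} contact.

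Concretely, your per-component bookkeeping is wrong where you assert that ``the only way to get exactly $1$ is a tree.'' For a connected component $K$ with cyclomatic number $c$ (so $|E(K)|-|V(K)|=c-1$), the index contribution is $(c-1)+2\delta_o(K)$. A tree ($c=0$, $\delta_o=1$) gives $1$; a single odd cycle ($c=1$, $\delta_o=0$) gives $0$; but a component with $c=2$ and at least one odd cycle also gives $1+0=1$. So a connected $RG(\mathcal{P})$ consisting, say, of two odd cycles sharing exactly one vertex has $\ind\mathfrak{g}_C(\mathcal{P})=1$ and $\dim\mathfrak{g}_C(\mathcal{P})=|V|+|E|=2|E|-1$ odd, yet by Proposition~\ref{prop:noocycle2} it is \emph{not} contact. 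Your index/parity argument cannot distinguish this from the tree case.

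The paper's proof handles this as follows. For the forward direction it does not rely solely on the index: it uses Propositions~\ref{prop:noevencycle}--\ref{prop:noocycle3} (via the determinant criterion of Theorem~\ref{thm:det}) to show directly that any component containing an even cycle, two self-loops, or two odd cycles forces $\det\varphi(\widehat{C})=0$ for every $\varphi$, hence is non-contact regardless of index. Only after these obstructions are established does the index formula finish the job. For the backward direction the paper does not merely check index one and odd dimension; it explicitly constructs a contact form on the tree component (Proposition~\ref{prop:contacttree}) and then uses that a direct sum of a contact algebra with Frobenius algebras is contact. Both of these ingredients are missing from your proposal and cannot be recovered from the index alone.
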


In order to prove Theorem~\ref{thm:contactchar}, we make use of an alternative characterization of contact Lie algebras. Let $\mathfrak{g}$ be a Lie algebra with ordered basis $\mathscr{B}(\mathfrak{g})=\{E_1,\hdots,E_n\}$. Recall that $\mathfrak{g}$ is contact only if it is odd-dimensional, so assume $\dim\mathfrak{g}=2k+1.$ Let $[I]=[E_1\hdots E_{2k+1}]^t$ and define $$\widehat{C}(\mathfrak{g},\mathscr{B}(\mathfrak{g}))=\begin{bmatrix}
0 & [I]^t\\
-[I] & C(\mathfrak{g},\mathscr{B}(\mathfrak{g}))
\end{bmatrix}.$$  
Take $\varphi\in\mathfrak{g}^*$. If $\{E_1^*,\dots,E_{2k+1}^*\}$ is the ``dual basis" associated to $\mathscr{B}(\mathfrak{g})$, i.e., $E^*_i(E_j)=\delta_{i=j}$ for $1\le i,j\le 2k+1$, then $\varphi$ can be written as a linear combination $\varphi=\sum_{i=1}^{2k+1}a_iE_i^*.$ In vector notation, $[\varphi]=[a_1,\dots,a_{2k+1}]^t.$ Applying $\varphi$ to each entry of $\widehat{C}(\mathfrak{g},\mathscr{B}(\mathfrak{g}))$ yields the $(2k+2)$-dimensional skew-symmetric matrix $$\varphi\left(\widehat{C}(\mathfrak{g},\mathscr{B}(\mathfrak{g}))\right)=\begin{bmatrix}
0 & [\varphi]^t\\
-[\varphi] & \varphi\left(C(\mathfrak{g},\mathscr{B}(\mathfrak{g})\right)
\end{bmatrix}.$$    

\noindent
Straightforward computations yield the following convenient characterization of contact Lie algebras.

\begin{theorem}[Salgado \textbf{\cite{InvCon}}]\label{thm:det} Let $\mathfrak{g}$ be an $n$-dimensional Lie algebra with basis $\mathscr{B}(\mathfrak{g})$ and $\varphi\in\mathfrak{g}^*$. If $n$ is odd, then $\mathfrak{g}$ is contact with contact form $\varphi$ if and only if $\det\varphi\left(\widehat{C}(\mathfrak{g},\mathscr{B}(\mathfrak{g}))\right)\neq 0$.
\end{theorem}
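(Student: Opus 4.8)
The plan is to reduce the statement to a single identity relating the top exterior power of a $2$-form to a Pfaffian. By the definition recalled in the introduction, saying that $\mathfrak{g}$ (with $\dim\mathfrak{g}=2k+1$) is contact with contact form $\varphi$ means precisely that $\varphi\wedge(d\varphi)^k\neq 0$ in $\Lambda^{2k+1}\mathfrak{g}^*$, where $d$ is the Chevalley--Eilenberg differential, normalized by $d\varphi(E_i,E_j)=-\varphi([E_i,E_j])$. Since $\varphi(\widehat{C}(\mathfrak{g},\mathscr{B}(\mathfrak{g})))$ is skew-symmetric of even size $2k+2$, its determinant is the square of its Pfaffian; hence it suffices to prove that $\varphi\wedge(d\varphi)^k\neq 0$ if and only if $\mathrm{Pf}(\varphi(\widehat{C}(\mathfrak{g},\mathscr{B}(\mathfrak{g}))))\neq 0$, and in fact I will show the two quantities agree up to a fixed nonzero scalar.

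First I would introduce an auxiliary $(2k+2)$-dimensional vector space with basis $E_0,E_1,\dots,E_{2k+1}$ and dual basis $E_0^*,\dots,E_{2k+1}^*$, and attach to the skew-symmetric matrix $\varphi(\widehat{C}(\mathfrak{g},\mathscr{B}(\mathfrak{g})))$ the $2$-form $\widehat{\omega}=\sum_{0\le i<j\le 2k+1}\varphi(\widehat{C})_{ij}\,E_i^*\wedge E_j^*$. Reading off the block structure of $\widehat{C}$, the border contributes $\sum_{j=1}^{2k+1}a_j\,E_0^*\wedge E_j^*=E_0^*\wedge\varphi$ (recall $\varphi=\sum_j a_jE_j^*$), while the lower-right block contributes $\sum_{1\le i<j}\varphi([E_i,E_j])\,E_i^*\wedge E_j^*=-d\varphi$. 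Thus $\widehat{\omega}=E_0^*\wedge\varphi-d\varphi$, which packages both the contact form and its differential into one $2$-form on the enlarged space.

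Next I would compute the top power $\widehat{\omega}^{\,k+1}$ in two ways. Since $E_0^*\wedge\varphi$ and $d\varphi$ are even-degree and hence commute, the binomial expansion collapses: $(E_0^*\wedge\varphi)^2=0$ because $E_0^*$ repeats, and $(d\varphi)^{k+1}$ vanishes because it is a $(2k+2)$-form supported on the $(2k+1)$-dimensional span of $E_1^*,\dots,E_{2k+1}^*$, so only the cross term survives, giving
\begin{equation*}
\widehat{\omega}^{\,k+1}=(k+1)(-1)^k\,E_0^*\wedge\big(\varphi\wedge(d\varphi)^k\big).
\end{equation*}
On the other hand, the standard Pfaffian identity for the top exterior power of the $2$-form attached to a $(2k+2)\times(2k+2)$ skew-symmetric matrix yields
\begin{equation*}
\widehat{\omega}^{\,k+1}=(k+1)!\,\mathrm{Pf}\big(\varphi(\widehat{C}(\mathfrak{g},\mathscr{B}(\mathfrak{g})))\big)\,E_0^*\wedge E_1^*\wedge\cdots\wedge E_{2k+1}^*.
\end{equation*}
Writing $\varphi\wedge(d\varphi)^k=c\,E_1^*\wedge\cdots\wedge E_{2k+1}^*$ and comparing the coefficient of $E_0^*\wedge\cdots\wedge E_{2k+1}^*$ in the two expressions gives $c=(-1)^k k!\,\mathrm{Pf}(\varphi(\widehat{C}))$, a nonzero multiple of the Pfaffian. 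Hence $\varphi\wedge(d\varphi)^k\neq 0$ if and only if $\mathrm{Pf}(\varphi(\widehat{C}))\neq 0$, equivalently $\det\varphi(\widehat{C})=\mathrm{Pf}(\varphi(\widehat{C}))^2\neq 0$, which is the claim; note the chain of equivalences settles both directions of the ``if and only if'' simultaneously.

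The computation is elementary, so the only real care needed is the bookkeeping of conventions: fixing the sign in the Chevalley--Eilenberg differential, confirming the normalization $\omega^{m}=m!\,\mathrm{Pf}(A)\,e_1^*\wedge\cdots\wedge e_{2m}^*$ for the $2$-form $\omega$ of a $2m\times 2m$ skew matrix $A$, and tracking the factors $(k+1)$, $(k+1)!$, and $(-1)^k$. None of these affects the conclusion, since each is a nonzero scalar and the equivalence of nonvanishing is insensitive to them; I expect the main obstacle to be simply verifying that the border of $\widehat{C}$ is exactly the $E_0^*\wedge\varphi$ piece, so that the surviving cross term reproduces $\varphi\wedge(d\varphi)^k$ rather than some other contraction.
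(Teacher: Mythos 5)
Your proof is correct. There is nothing in the paper to compare it against line by line: the paper attributes the result to Salgado and introduces it only with the remark that ``straightforward computations'' yield it, so your Pfaffian argument supplies exactly the computation being alluded to. The decisive step checks out: the bordered matrix $\widehat{C}(\mathfrak{g},\mathscr{B}(\mathfrak{g}))$ is engineered so that its associated $2$-form on the extended $(2k+2)$-dimensional space is $\widehat{\omega}=E_0^*\wedge\varphi-d\varphi$; then $(E_0^*\wedge\varphi)^2=0$ and $(d\varphi)^{k+1}=0$ (a $(2k+2)$-form supported on a $(2k+1)$-dimensional span) kill all but the cross term, and comparing $\widehat{\omega}^{\,k+1}=(k+1)(-1)^k\,E_0^*\wedge\varphi\wedge(d\varphi)^k$ with the standard normalization $\widehat{\omega}^{\,k+1}=(k+1)!\,\mathrm{Pf}\bigl(\varphi(\widehat{C})\bigr)\,E_0^*\wedge\cdots\wedge E_{2k+1}^*$ gives $\varphi\wedge(d\varphi)^k=(-1)^k k!\,\mathrm{Pf}\bigl(\varphi(\widehat{C})\bigr)\,E_1^*\wedge\cdots\wedge E_{2k+1}^*$, so nonvanishing of the contact volume is equivalent to $\det\varphi(\widehat{C})=\mathrm{Pf}\bigl(\varphi(\widehat{C})\bigr)^2\neq 0$, settling both directions at once. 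Your closing caveat is also resolved in the right way: flipping the sign convention for the Chevalley--Eilenberg differential merely replaces $(-1)^k$ by $(-1)^{2k}$ in the scalar, so the equivalence of nonvanishing is insensitive to every convention you flagged.
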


Let $\mathcal{P}$ be a type-C poset and $\mathfrak{g}=\mathfrak{g}_C(\mathcal{P})$. Ongoing, we will want to refer to certain rows of $\varphi(\widehat{C}(\mathfrak{g},\mathscr{B}_C(\mathcal{P})))$. Since throughout $\varphi$ will be clear from the context, it is omitted from the notation. We denote the first row by $\mathbf{I}(\mathcal{P})$. Note that the remaining rows correspond to rows of $\varphi(C(\mathfrak{g},\mathscr{B}(\mathfrak{g})))$ which are indexed by elements of $\mathscr{B}(\mathfrak{g})$. Consequently, we denote the row of $\varphi(\widehat{C}(\mathfrak{g},\mathscr{B}_C(\mathcal{P})))$ corresponding to the basis element
\begin{itemize}
    \item $D_i\in \mathscr{B}_C(\mathcal{P})$ by $\mathbf{\widehat{D}_i}(\mathcal{P})$,
    \item $E_{-i,i}\in \mathscr{B}_C(\mathcal{P})$ by $\mathbf{\widehat{E}_{-i,i}}(\mathcal{P})$,
    \item $R^{\pm}_{i,j}\in \mathscr{B}_C(\mathcal{P})$ by $\mathbf{\widehat{R}^{\pm}_{i,j}}(\mathcal{P})$, and
    \item $R_{i,j}\in \mathscr{B}_C(\mathcal{P})$ by $\mathbf{\widehat{R}_{i,j}},(\mathcal{P})$.
\end{itemize}

\begin{remark}\label{rem:nonzero}
Note that if $\mathcal{P}$ is a height-one type-C poset, then for $b\in\mathscr{B}_C(\mathcal{P})$ of the form $R^{\pm}_{i,j}$, $R_{i,j}$, or $E_{-i,i}$, the entries of $\mathbf{\widehat{b}}(\mathcal{P})$ are all multiples of $\varphi(b)$. Consequently, if $\varphi$ is a contact form, then $\varphi(b)\neq 0$ for all $b\in\mathscr{B}_C(\mathcal{P})$ of the form $R^{\pm}_{i,j}, R_{i,j},$ and $E_{-i,i}.$
\end{remark}


With the notation set, we proceed toward the proof of Theorem~\ref{thm:contactchar}. In Propositions~\ref{prop:nocycle1}, \ref{prop:noevencycle}, \ref{prop:noselfloops}, \ref{prop:noocycle1}, \ref{prop:noocycle2}, and \ref{prop:noocycle3} below, we show that if $\mathcal{P}$ is a connected, contact, type-C poset of height one, then $RG(\mathcal{P})$ cannot contain a cycle.

\begin{proposition}\label{prop:nocycle1}
Let $\mathcal{P}$ be a connected, type-C poset of height one. If $RG(\mathcal{P})$ contains either
\begin{enumerate}
    \item[\textup(a\textup)] a self-loop and no other cycles or
    \item[\textup(b\textup)] a single cycle that consists of an odd number of vertices,
\end{enumerate}
then $\mathfrak{g}_C(\mathcal{P})$ is not contact.
\end{proposition}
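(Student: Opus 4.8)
The plan is to connect the contact condition to the index formula of Theorem~\ref{thm:index1}, exploiting the fact that a contact Lie algebra must have index exactly one. Recall that under the hypotheses of part (a) or (b), the relation graph $RG(\mathcal{P})$ is connected and contains a single odd cycle (a self-loop being an odd cycle on one vertex), so $\delta_o=0$; since a connected graph on $|V(\mathcal{P})|$ vertices with a single cycle has exactly $|E(\mathcal{P})|=|V(\mathcal{P})|$ edges, Theorem~\ref{thm:index1c} gives $\ind\mathfrak{g}_C(\mathcal{P})=|E(\mathcal{P})|-|V(\mathcal{P})|+2\delta_o=0$. Equivalently, by Theorem~\ref{thm:frob}, each such component makes $\mathfrak{g}_C(\mathcal{P})$ Frobenius. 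A Frobenius algebra has even dimension and index zero, so it cannot be odd-dimensional, and in particular cannot be contact. This already forces the conclusion, and I would present it in exactly this way.

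More concretely, first I would record that $\dim\mathfrak{g}_C(\mathcal{P})=|E(\mathcal{P})|+|V(\mathcal{P})|$ and that index-zero forces even dimension (since for a Frobenius algebra the generic form $\varphi$ makes $\varphi(C(\mathfrak{g},\mathscr{B}(\mathfrak{g})))$ nonsingular, which is impossible in odd dimension). Then I would observe that in both cases (a) and (b) the single-odd-cycle structure yields $\ind\mathfrak{g}_C(\mathcal{P})=0$ by the computation above, so $\mathfrak{g}_C(\mathcal{P})$ is Frobenius and even-dimensional. Since Theorem~\ref{thm:det} requires $\mathfrak{g}$ to be odd-dimensional for it to be contact, and more fundamentally since a contact algebra has index one rather than zero, $\mathfrak{g}_C(\mathcal{P})$ cannot be contact. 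This is clean and avoids any direct determinant computation.

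The only subtlety—and the step I expect to be the main obstacle if one wants a fully self-contained argument—is verifying the edge count $|E(\mathcal{P})|=|V(\mathcal{P})|$ in case (b) and handling the self-loop bookkeeping in case (a). A connected graph with exactly one cycle is unicyclic, hence has as many edges as vertices; for a self-loop this is immediate since the loop contributes one edge on its single incident vertex while the rest of the graph, being a tree attached to it, contributes one edge per remaining vertex. One should also confirm that an odd cycle of length one (the self-loop) is genuinely counted as ``containing no even cycle and having $\delta_o=0$,'' consistent with the remarks following Definition~\ref{def:RG} that self-loops arise only in type C and are treated as odd cycles. With these combinatorial facts in hand, the index computation and the appeal to Theorem~\ref{thm:index1c} finish the proof.

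If a more direct route is preferred, one could instead argue via Theorem~\ref{thm:det} by exhibiting that every $\varphi\in\mathfrak{g}^*$ gives $\det\varphi(\widehat{C}(\mathfrak{g},\mathscr{B}_C(\mathcal{P})))=0$; but this requires analyzing the augmented skew-symmetric matrix and showing its rank never reaches full rank, which is precisely the kind of calculation the index formula lets us bypass. I would therefore favor the index-based argument, reserving the determinant characterization for the converse direction of Theorem~\ref{thm:contactchar}, where one must actually produce a contact form.
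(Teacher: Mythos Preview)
Your proposal is correct and follows essentially the same approach as the paper: compute the index via Theorem~\ref{thm:index1c} to get $\ind\mathfrak{g}_C(\mathcal{P})=0$, then conclude since contact requires index one. The paper's proof is a one-liner that omits the edge-count and $\delta_o$ verification you spell out, and it does not bother with the dimension-parity/Frobenius digression (which is correct but unnecessary once you have $\ind\neq 1$).
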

\begin{proof}
Applying Theorem~\ref{thm:index1}, $\ind\mathfrak{g}_C(\mathcal{P})=0\neq 1$. Since an algebra $\mathfrak{g}$ is contact only if $\ind\mathfrak{g}=1$, the result follows.
\end{proof}

For the cases when $RG(\mathcal{P})$ contains an even cycle or multiple odd cycles we require the following lemmas.

\begin{lemma}\label{lem:pathcon}
Let $n\ge 1$ and $1<i_0\neq i_1\neq \hdots\neq i_n\le N$. Define $$L_0=x_{1,N}+x_{i_0,N}+x_{i_{1},N}~\text{or}~x_{1,N}+x_{i_0,N}-x_{i_{1},N},$$
$$L_j=x_{1,N}+x_{i_j,N}+x_{i_{j+1},N},~x_{1,N}+x_{i_j,N}-x_{i_{j+1},N},~\text{or}~x_{1,N}-x_{i_j,N}+x_{i_{j+1},N}$$ for $1\le j\le n-2$, and $$L_{n-1}=x_{1,N}+x_{i_{n-1},N}+x_{i_{n},N}~\text{or}~x_{1,N}-x_{i_{n-1},N}+x_{i_{n},N}.$$ Suppose that $L_j=v_{1,N}+v_{i_j,N}-v_{i_{j+1},N}$ if and only if $L_{j+1}=v_{1,N}-v_{i_{j+1},N}+v_{i_{j+2},N}$ for $0\le j<n-1$. Then there exist constants $c_j\in\{-1,1\}$ for $0\le j<n$ such that $$\sum_{j=0}^{n-1}c_jL_j=\begin{cases}
x_{1,N}+x_{i_0,N}+x_{i_n,N}, & n\text{ is odd}\\
x_{i_0,N}-x_{i_n,N}, & n\text{ is even}.
\end{cases}$$
\end{lemma}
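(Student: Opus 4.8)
The plan is to write each $L_j$ in the uniform shape $L_j = x_{1,N} + \epsilon_j x_{i_j,N} + \delta_j x_{i_{j+1},N}$ with signs $\epsilon_j,\delta_j\in\{-1,+1\}$, and then take the explicit alternating combination $c_j = (-1)^j$. First I would record the sign data forced by the hypotheses: the two allowed forms of $L_0$ both carry $+x_{i_0,N}$, so $\epsilon_0 = +1$; the two allowed forms of $L_{n-1}$ both carry $+x_{i_n,N}$, so $\delta_{n-1} = +1$; and for $1\le j\le n-2$ the three permitted shapes are exactly those with $(\epsilon_j,\delta_j)\neq(-1,-1)$.

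The crucial reinterpretation of the biconditional hypothesis is that it is equivalent to $\delta_j = \epsilon_{j+1}$ for every $0\le j\le n-2$; that is, the shared variable $x_{i_{j+1},N}$, which occurs in both $L_j$ and $L_{j+1}$, receives the same sign in each. To see this, note that among the allowed shapes the only one with $\delta_j = -1$ is $x_{1,N}+x_{i_j,N}-x_{i_{j+1},N}$ and the only one with $\epsilon_{j+1} = -1$ is $x_{1,N}-x_{i_{j+1},N}+x_{i_{j+2},N}$; hence the stated equivalence reads $\delta_j = -1 \Leftrightarrow \epsilon_{j+1} = -1$, which for signs in $\{-1,+1\}$ is precisely $\delta_j = \epsilon_{j+1}$. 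This identification handles the boundary indices $j=0$ and $j=n-2$ uniformly, since $\epsilon_0$ and $\delta_{n-1}$ are already pinned to $+1$.

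With $c_j = (-1)^j$ I would then compute $\sum_{j=0}^{n-1} c_j L_j$ coefficient by coefficient. The coefficient of $x_{1,N}$ is $\sum_{j=0}^{n-1}(-1)^j$, which equals $1$ when $n$ is odd and $0$ when $n$ is even. The variable $x_{i_0,N}$ appears only in $L_0$, contributing $(-1)^0\epsilon_0 = 1$, and $x_{i_n,N}$ appears only in $L_{n-1}$, contributing $(-1)^{n-1}\delta_{n-1} = (-1)^{n-1}$. For each interior index $1\le k\le n-1$, the variable $x_{i_k,N}$ appears only in $L_{k-1}$ and $L_k$, with total coefficient $(-1)^{k-1}\delta_{k-1} + (-1)^k\epsilon_k = \delta_{k-1}\big((-1)^{k-1}+(-1)^k\big) = 0$, using $\delta_{k-1} = \epsilon_k$. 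Assembling these yields $x_{1,N}+x_{i_0,N}+x_{i_n,N}$ when $n$ is odd and $x_{i_0,N}-x_{i_n,N}$ when $n$ is even, as required; the degenerate case $n=1$, where $L_0 = L_{n-1}$ is forced into the shape $x_{1,N}+x_{i_0,N}+x_{i_1,N}$, is covered by the same formula with $c_0 = 1$.

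The only genuine obstacle is the translation of the biconditional hypothesis into the sign identity $\delta_j = \epsilon_{j+1}$; once that is in place, the cancellation of every interior variable and the parity-controlled behaviour of the $x_{1,N}$ and endpoint terms follow immediately from the alternating choice of signs. An equivalent induction on $n$, reducing the pair $L_{n-2},L_{n-1}$ to a single vector in the style of Lemmas~\ref{lem:oddcl2} and~\ref{lem:evencl}, is available, but the direct computation above is shorter and avoids re-establishing the hypothesis structure at each step.
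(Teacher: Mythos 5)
Your proof is correct, but it takes a genuinely different route from the paper's. The paper proves Lemma~\ref{lem:pathcon} by induction on $n$ with a three-case analysis: Case 1 (every $L_j$ of the form $x_{1,N}+x_{i_j,N}+x_{i_{j+1},N}$) and Case 2 (fully alternating signs, forcing $n$ even) are handled by direct computation with $c_j=(-1)^j$, while Case 3 (a transition between the two patterns) merges three consecutive vectors into the single vector $L_k-L_{k+1}+L_{k+2}$ of admissible shape and applies the inductive hypothesis to the shortened sequence, inheriting the constants recursively. You instead write $L_j=x_{1,N}+\epsilon_jx_{i_j,N}+\delta_jx_{i_{j+1},N}$, observe that the biconditional hypothesis is exactly the sign-matching condition $\delta_j=\epsilon_{j+1}$ on the shared variable, and verify in one pass that the single explicit choice $c_j=(-1)^j$ works: the $x_{1,N}$-coefficients alternate to give the parity dichotomy, the endpoint constraints $\epsilon_0=\delta_{n-1}=+1$ produce the $x_{i_0,N}$ and $x_{i_n,N}$ terms, and each interior variable cancels in consecutive pairs. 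This buys a shorter, case-free argument and a sharper conclusion --- the alternating signs always suffice, a fact the paper's recursion produces implicitly but never states --- whereas the paper's inductive scheme buys uniformity with the neighboring results (Lemmas~\ref{lem:oddcl2},~\ref{lem:evencl}, \ref{lem:conecyc}, and~\ref{lem:conecyc2} are all proved by the same merge-and-recurse template). One small refinement: since the hypothesis $i_0\neq i_1\neq\hdots\neq i_n$ literally asserts only that consecutive indices differ, it is cleaner to phrase the interior cancellation term-by-term --- the $\delta_{k-1}$-term of $(-1)^{k-1}L_{k-1}$ cancels the $\epsilon_k$-term of $(-1)^kL_k$ because $\delta_{k-1}=\epsilon_k$ --- rather than as ``the coefficient of $x_{i_k,N}$,'' which tacitly presumes the $i_j$ are pairwise distinct; with that rewording your computation goes through verbatim even if nonconsecutive indices coincide.
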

\begin{proof}
By induction on $n$. The cases $n=1$ or $n=2$ can be checked directly. Assume the result holds for $n-1\ge 2$. There are three cases.
\bigskip

\noindent
\textbf{Case 1:} $L_j=x_{1,N}+x_{i_j,N}+x_{i_{j+1},N}$ for $0\le j< n$. In this case,
\begin{align*}
    \sum_{j=0}^{n-1}(-1)^jL_j&=(-1)^{n-1}L_{n-1}+\sum_{j=0}^{n-2}(-1)^jL_j \\
    &=(-1)^{n-1}(x_{1,N}+x_{i_{n-1},N}+x_{i_{n},N})+\begin{cases}
        x_{1,N}+x_{i_0,N}+x_{i_{n-1},N}, & n-1\text{ is odd}\\
x_{i_0,N}-x_{i_{n-1},N}, & n-1\text{ is even}
    \end{cases} \\
    &=\begin{cases}
        -(x_{1,N}+x_{i_{n-1},N}+x_{i_{n},N})+x_{1,N}+x_{i_0,N}+x_{i_{n-1},N}, & n-1\text{ is odd}\\
(x_{1,N}+x_{i_{n-1},N}+x_{i_{n},N})+x_{i_0,N}-x_{i_{n-1},N}, & n-1\text{ is even}
    \end{cases} \\
    &=\begin{cases}
x_{i_0,N}-x_{i_n,N}, & n\text{ is even}\\
x_{1,N}+x_{i_0,N}+x_{i_n,N}, & n\text{ is odd},
\end{cases}
\end{align*} 
where the second equality follows from our induction hypothesis. So, taking $c_j=(-1)^j$ yields the result.
\bigskip

\noindent
\textbf{Case 2:} $L_j=x_{1,N}+(-1)^jx_{i_j,N}+(-1)^{j+1}x_{i_{j+1},N}$ for $0\le j<n$. Note that, in this case, $n$ must be even, and we have
\begin{align*}
    \sum_{j=0}^{n-1}(-1)^jL_j&=\sum_{j=0}^{\frac{n-2}{2}}(L_{2j}-L_{2j+1}) \\
    &=\sum_{j=0}^{\frac{n-2}{2}}[(x_{1,N}+x_{i_{2j},N}-x_{i_{2j+1},N})-(x_{1,N}-x_{i_{2j+1},N}+x_{i_{2j+2},N})] \\
    &=\sum_{j=0}^{\frac{n-2}{2}}(x_{i_{2j},N}-x_{i_{2j+2},N}) \\
    &=x_{i_0,N}-x_{i_n,N}.
\end{align*}
So, as in Case 1, taking $c_j=(-1)^j$ yields the result.
\bigskip

\noindent
\textbf{Case 3:} There exists $k$ such that $0\le k<n-2$ and either $$L_k=x_{1,N}+x_{i_{k},N}+x_{i_{k+1},N},\quad L_{k+1}=x_{1,N}+x_{i_{k+1},N}-x_{i_{k+2},N},\quad\text{and}\quad L_{k+2}=x_{1,N}-x_{i_{k+2},N}+x_{i_{k+3},N}$$ or $$L_k=x_{1,N}+x_{i_{k},N}-x_{i_{k+1},N},\quad L_{k+1}=x_{1,N}-x_{i_{k+1},N}+x_{i_{k+2},N},\quad\text{and}\quad L_{k+2}=x_{1,N}+x_{i_{k+2},N}+x_{i_{k+3},N}.$$ Without loss of generality, assume that $$L_k=x_{1,N}+x_{i_{0},N}+x_{i_{1},N},\quad L_{k+1}=x_{1,N}+x_{i_{1},N}-x_{i_{2},N},\quad\text{and}\quad L_{k+2}=x_{1,N}-x_{i_{2},N}+x_{i_{3},N}.$$ For $0\le j\le n-3$, define
\begin{align*}
    L'_j&=\begin{cases}
    L_k-L_{k+1}+L_{k+2}, & j=0 \\
    L_{j+2}, & 0<j<n-2
\end{cases} \\
&=\begin{cases}
    x_{1,N}+x_{i_0,N}+x_{i_{3},N}, & j=0 \\
    L_{j+2}, & 0<j<n-2.
\end{cases}
\end{align*}
Note that our induction hypothesis applies to the $L'_j$ for $0\le j\le n-3$. Thus, there exist constants $c'_j\in\{-1,1\}$ for $0\le j\le n-3$ such that $$\sum_{j=0}^{n-3}c'_jL'_j=\begin{cases}
x_{1,N}+x_{i_0,N}+x_{i_n,N}, & n-2\text{ is odd}\\
x_{i_0,N}-x_{i_n,N}, & n-2\text{ is even}.
\end{cases}$$ Now, setting $$c_j=\begin{cases}
    c'_0, & j=0,2 \\
    -c'_0, & j=1 \\
    c'_{j-2}, & 2<j\le n
\end{cases}$$
for $0\le j\le n$, we have that 
\begin{align*}
\sum_{j=0}^{n-1}c_jL_j&=c_0L_0+c_1L_1+c_2L_2+\sum_{j=3}^{n-1}c_jL_j \\
    &=c'_0L_0-c'_0L_1+c'_0L_2+\sum_{j=3}^{n-1}c_jL_j \\
    &=c'_0(L_0-L_1+L_2)+\sum_{j=3}^{n-1}c_jL_j \\
    &=c'_0L'_0+\sum_{j=1}^{n-3}c'_jL'_j \\
    &=\begin{cases}
x_{1,N}+x_{i_0,N}+x_{i_n,N}, & n-2\text{ is odd}\\
x_{i_0,N}-x_{i_n,N}, & n-2\text{ is even}
\end{cases}\\
&=\begin{cases}
x_{1,N}+x_{i_0,N}+x_{i_n,N}, & n\text{ is odd}\\
x_{i_0,N}-x_{i_n,N}, & n\text{ is even}.
\end{cases}.
\end{align*}
The result follows.
\end{proof}

\begin{lemma}\label{lem:conecyc}
Let $n\ge 2$ and $1<i_0\neq i_1\neq \hdots\neq i_n\le N$. Define $$L_0=x_{1,N}+x_{i_0,N}+x_{i_1,N}~\text{or}~x_{1,N}+x_{i_0,N}-x_{i_1,N},$$
$$L_j=x_{1,N}+x_{i_j,N}+x_{i_{j+1},N},~x_{1,N}+x_{i_j,N}-x_{i_{j+1},N},~\text{or}~x_{1,N}-x_{i_j,N}+x_{i_{j+1},N}$$ for $0\le j<n$, and $$L_n=x_{1,N}+x_{i_0,N}+x_{i_{n},N}~\text{or}~x_{1,N}+x_{i_0,N}-x_{i_{n},N}.$$ Suppose that
\begin{itemize}
    \item $L_j=x_{1,N}+x_{i_j,N}-x_{i_{j+1},N}$ if and only if $L_{j+1}=x_{1,N}-x_{i_{j+1},N}+x_{i_{j+2},N}$ for $0\le j<n-1$, and
    \item $L_{n-1}=x_{i,N}+x_{i_{n-1},N}-x_{i_n,N}$ if and only if $L_n=x_{1,N}-x_{i_n,N}+x_{i_{0},N}$.
\end{itemize}
Then there exists constants $c_j\in\{-1,1\}$ for $0\le j\le n$ such that $$\sum_{j=0}^nc_jL_j=\begin{cases}
x_{1,N}+2x_{i_0,N}, & n\text{ even} \\
0, & n\text{ odd}.
\end{cases}$$
\end{lemma}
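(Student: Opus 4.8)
The plan is to prove Lemma~\ref{lem:conecyc} by induction on $n$, directly paralleling the structure of Lemma~\ref{lem:pathcon} while treating the ``closing'' vector $L_n$ as the new ingredient. The base case $n=2$ can be verified by hand from the compatibility hypotheses (there are only a few admissible sign patterns for $L_0,L_1,L_2$, and in each one an explicit choice of $c_j\in\{-1,1\}$ produces either $x_{1,N}+2x_{i_0,N}$ or $0$ according to the parity of $n$). Assuming the result for $n-1$, I would split into cases mirroring those of Lemma~\ref{lem:pathcon}: an all-``$+$'' case where $L_j=x_{1,N}+x_{i_j,N}+x_{i_{j+1},N}$ throughout, an alternating case, and a ``local reduction'' case in which a consecutive triple $L_k,L_{k+1},L_{k+2}$ is collapsed into a single vector $L'_k=L_k-L_{k+1}+L_{k+2}$ of admissible form, so that the inductive hypothesis applies to the shortened sequence $L'_0,\dots,L'_{n-3}$.

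The key observation driving the reduction is that the compatibility condition ``$L_j=x_{1,N}+x_{i_j,N}-x_{i_{j+1},N}$ iff $L_{j+1}=x_{1,N}-x_{i_{j+1},N}+x_{i_{j+2},N}$'' guarantees the telescoping cancellation of the $x_{i_j,N}$ terms whenever a minus sign appears, exactly as in the proof of Lemma~\ref{lem:pathcon}. The main new feature here is the cyclic closing hypothesis relating $L_{n-1}$ and $L_n$, which ensures that the partial sum produced by the induction (of the form $x_{1,N}+x_{i_0,N}\pm x_{i_n,N}$ or $x_{i_0,N}-x_{i_n,N}$, depending on parity) combines with $c_n L_n$ so that the residual $x_{i_n,N}$ term cancels and the $x_{i_0,N}$ coefficient doubles (when $n$ is even) or everything collapses to $0$ (when $n$ is odd). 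Concretely, I expect to invoke Lemma~\ref{lem:pathcon} on the path $L_0,\dots,L_{n-1}$ to extract $\sum_{j=0}^{n-1}c_jL_j$, then choose $c_n=\pm 1$ so that $c_nL_n$ kills the $x_{i_n,N}$ term and yields the claimed value.

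A cleaner route, which I would pursue first, is to reduce directly to Lemma~\ref{lem:pathcon} rather than reproving the induction. Applying Lemma~\ref{lem:pathcon} to the sub-path $L_0,\dots,L_{n-1}$ gives constants $c_0,\dots,c_{n-1}$ with
\begin{equation*}
\sum_{j=0}^{n-1}c_jL_j=\begin{cases} x_{1,N}+x_{i_0,N}+x_{i_n,N}, & n-1\text{ odd},\\ x_{i_0,N}-x_{i_n,N}, & n-1\text{ even}.\end{cases}
\end{equation*}
The closing compatibility condition then forces $L_n$ to have the correct sign on $x_{i_n,N}$ to cancel against this residual, and a single sign choice $c_n\in\{-1,1\}$ finishes the computation in each parity. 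The bookkeeping to confirm that the hypotheses of Lemma~\ref{lem:pathcon} are genuinely satisfied by the truncated sequence $L_0,\dots,L_{n-1}$ is the only delicate point, since Lemma~\ref{lem:pathcon} requires its own internal compatibility among consecutive terms.

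The hard part will be the careful case analysis at the junction between $L_{n-1}$ and $L_n$: verifying that the sign of $x_{i_n,N}$ in the inductive partial sum is always opposite to its sign in $c_nL_n$ (so cancellation occurs) and simultaneously that the $x_{i_0,N}$ contributions add constructively in the even case but destructively in the odd case. Getting the parity accounting exactly right—tracking how the alternating signs $c_j=(-1)^j$ interact with the ``$+/-$'' structure of each $L_j$—is where the proof is most error-prone, and I would handle it by isolating the same three structural cases (all-plus, alternating, and local-triple-reduction) used in Lemma~\ref{lem:pathcon}, appending the closing term in each.
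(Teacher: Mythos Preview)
Your primary inductive plan---base cases checked by hand, then three cases (all ``$+$'', alternating, and a local triple-collapse $L_k-L_{k+1}+L_{k+2}$) with the induction hypothesis applied to the shortened sequence---is exactly what the paper does. The paper checks $n=2,3$ directly, then in Case~3 assumes without loss of generality that the triple sits at positions $0,1,2$, sets $L'_0=L_0-L_1+L_2$ and $L'_j=L_{j+2}$ for $0<j\le n-2$, and applies the induction hypothesis at $n-2$; your description matches this (modulo the minor indexing slip that the shortened sequence is $L'_0,\dots,L'_{n-2}$, not $L'_{n-3}$).

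Your ``cleaner route'' of applying Lemma~\ref{lem:pathcon} to the truncated path $L_0,\dots,L_{n-1}$ and then absorbing $L_n$ does not go through as stated. Lemma~\ref{lem:pathcon} imposes an endpoint constraint: its last vector must have coefficient $+1$ on $x_{i_n,N}$ (only the forms $x_{1,N}+x_{i_{n-1},N}+x_{i_n,N}$ and $x_{1,N}-x_{i_{n-1},N}+x_{i_n,N}$ are allowed). In Lemma~\ref{lem:conecyc}, however, $L_{n-1}$ is an interior term and may equal $x_{1,N}+x_{i_{n-1},N}-x_{i_n,N}$, which violates that endpoint hypothesis. So the truncation does not always land inside the scope of Lemma~\ref{lem:pathcon}; you would have to peel off this boundary situation separately, and at that point you are effectively reproducing the paper's case analysis rather than shortcutting it. You were right to flag the hypothesis-verification as the delicate point---it is in fact where the shortcut breaks.
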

\begin{proof}
By induction on $n$. The cases $n=2$ and $n=3$ can be checked directly. Assume the result holds for $n-1\ge 3$. There are three cases.
\bigskip

\noindent
\textbf{Case 1:} $L_j=x_{1,N}+x_{i_j,N}+x_{i_{j+1},N}$, for $0\le j<n$, and $L_n=x_{1,N}+x_{i_0,N}+x_{i_n,N}$. In this case,
\begin{align*}
    \sum_{j=0}^n(-1)^jL_j&=(-1)^nL_n+\sum_{j=0}^{n-1}(-1)^j(x_{1,N}+x_{i_j,N}+x_{i_{j+1},N}) \\
    &=(-1)^n(x_{1,N}+x_{i_0,N}+x_{i_n,N})+\sum_{j=0}^{n-1}(-1)^j(x_{1,N}+x_{i_j,N}+x_{i_{j+1},N}) \\
    &=(-1)^n(x_{i_0,N}+x_{i_n,N})+\sum_{j=0}^n(-1)^jx_{1,N}+\sum_{j=0}^{n-1}(-1)^j(x_{i_j,N}+x_{i_{j+1},N}) \\
    &=(-1)^n(x_{i_0,N}+x_{i_n,N})+\sum_{j=0}^n(-1)^jx_{1,N}+\sum_{j=0}^{n-1}(-1)^{j+1}(-x_{i_j,N}-x_{i_{j+1},N}) \\
    &=(-1)^n(x_{i_0,N}+x_{i_n,N})+\sum_{j=0}^n(-1)^jx_{1,N}+(x_{i_0,N}-(-1)^nx_{i_{n},N}) \\
    &=\begin{cases}
    x_{i_0,N}+x_{i_n,N}+x_{1,N}+x_{i_0,N}-x_{i_n,N}, & n\text{ even} \\
    -x_{i_0,N}-x_{i_n,N}+x_{i_0,N}+x_{i_n,N}, & n\text{ odd}
    \end{cases}\\
    &=\begin{cases}
    x_{1,N}+2x_{i_0,N}, & n\text{ even} \\
    0, & n\text{ odd},
    \end{cases}
\end{align*}
where the fifth equality follows from Lemma~\ref{lem:path}. So, taking $c_j=(-1)^j$ yields the result.
\bigskip

\noindent
\textbf{Case 2:} $L_j=~x_{1,N}+x_{i_j,N}-x_{i_{j+1},N}$ or $x_{1,N}-x_{i_j,N}+x_{i_{j+1},N}$ for $0\le j<n$ and $L_n=x_{1,N}+x_{i_0,N}-x_{i_{n},N}$. Note that this case can only occur when $n$ is odd. Consequently,
\begin{align*}
    \sum_{j=0}^n(-1)^jL_j&=(-1)^nL_n+\sum_{j=0}^{n-1}(-1)^jL_j \\
    &=(-1)^n(x_{1,N}+x_{i_0,N}-x_{i_n,N})+\sum_{j=0}^{n-1}(-1)^j(x_{1,N}+(-1)^jx_{i_j,N}+(-1)^{j+1}x_{i_{j+1},N}) \\
    &=\sum_{j=0}^n(-1)^jx_{1,N}-x_{i_0,N}+x_{i_n,N}+\sum_{j=0}^{n-1}(-1)^j((-1)^jx_{i_j,N}+(-1)^{j+1}x_{i_{j+1},N}) \\
    &=\sum_{j=0}^n(-1)^jx_{1,N}+\sum_{j=0}^n(x_{i_j,N}-x_{i_j,N}) \\
    &=0.
\end{align*}

So, as in Case 1, taking $c_j=(-1)^j$ yields the result.
\bigskip

\noindent
\textbf{Case 3:} There exists $0\le k<n$ such that $L_k=x_{1,N}-x_{i_k,N}+x_{i_{k+1},N}$ or $x_{1,N}+x_{i_k,N}-x_{i_{k+1},N}$ and either there exists $0\le j\neq k<n$ such that $L_j=x_{1,N}+x_{i_j,N}+x_{i_{j+1},N}$ or $L_n=x_{1,N}+x_{i_0,N}+x_{i_n,N}$. Without loss of generality, assume that $L_0=x_{1,N}+x_{i_0,N}+x_{i_1,N}$, $L_1=x_{1,N}+x_{i_1,N}-x_{i_2,N}$, and $L_2=x_{1,N}-x_{i_2,N}+x_{i_3,N}$. For $0\le j\le n-2$, define 
\begin{align*}
    L'_j&=\begin{cases}
    L_0-L_1+L_2, & j=0 \\
    L_{j+2}, & 0<j\le n-2
\end{cases} \\
&=\begin{cases}
    x_{1,N}+x_{i_0,N}+x_{i_3,N}, & j=0 \\
    L_{j+2}, & 0<j\le n-2.
\end{cases}
\end{align*} Note that our inductive hypothesis applies to the collection of vectors $L'_j$ for $0\le j\le n-2$. Thus, there exist constants $c'_j\in\{-1,1\}$ for $0\le j\le n-2$ such that $$\sum_{j=0}^{n-2}c'_jL'_j=\begin{cases}
x_{1,N}+2x_{i_0,N}, & n-2\text{ even} \\
0, & n-2\text{ odd}.
\end{cases}$$ Now, setting $$c_j=\begin{cases}
    c'_0, & j=0,2 \\
    -c'_0, & j=1 \\
    c'_{j-2}, & 2<j\le n
\end{cases}$$
for $0\le j\le n$, we have that 
\begin{align*}
\sum_{j=0}^nc_jL_j&=c_0L_0+c_1L_1+c_2L_2+\sum_{j=3}^nc_jL_j \\
    &=c'_0L_0-c'_0L_1+c'_0L_2+\sum_{j=3}^nc_jL_j \\
    &=c'_0(L_0-L_1+L_2)+\sum_{j=3}^nc_jL_j \\
    &=c'_0L'_0+\sum_{j=1}^{n-2}c'_jL'_j \\
    &=\begin{cases}
x_{1,N}+2x_{i_0,N}, & n-2\text{ even} \\
0, & n-2\text{ odd}
\end{cases} \\
&=\begin{cases}
x_{1,N}+2x_{i_0,N}, & n\text{ even} \\
0, & n\text{ odd}.
\end{cases}
\end{align*}
The result follows.
\end{proof}

\begin{lemma}\label{lem:conecyc2}
Let $n\ge 2$ even and $1<i_0\neq i_1\neq \hdots\neq i_n\le N$. Define $$L_0=x_{1,N}-x_{i_0,N}+x_{i_1,N},$$
$$L_j=x_{1,N}+x_{i_j,N}+x_{i_{j+1},N},~x_{1,N}+x_{i_j,N}-x_{i_{j+1},N},~\text{or}~x_{1,N}-x_{i_j,N}+x_{i_{j+1},N}$$ for $0\le j<n$, and $$L_n=x_{1,N}-x_{i_0,N}+x_{i_{n},N}.$$ Suppose that $L_j=x_{1,N}+x_{i_j,N}-x_{i_{j+1},N}$ if and only if $L_{j+1}=x_{1,N}-x_{i_{j+1},N}+x_{i_{j+2},N}$ for $1\le j<n-1$. Then there exists constants $c_j\in\{-1,1\}$ for $0\le j\le n$ such that $$\sum_{j=0}^nc_jL_j=-x_{1,N}+2x_{i_0,N}.$$
\end{lemma}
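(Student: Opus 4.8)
The plan is to obtain this lemma from Lemma~\ref{lem:conecyc} by a linear change of coordinates, avoiding a fresh induction. The only way the present hypotheses differ from those of Lemma~\ref{lem:conecyc} is the sign of the $x_{i_0,N}$ term in the two boundary vectors: here $L_0$ and $L_n$ carry $-x_{i_0,N}$, whereas Lemma~\ref{lem:conecyc} uses $+x_{i_0,N}$. Since the indices $i_0,\dots,i_n$ are distinct and every $i_j$ exceeds $1$, the coordinate $x_{i_0,N}$ never appears in the interior vectors $L_1,\dots,L_{n-1}$; it is confined to $L_0$ and $L_n$. Hence a single sign change on that one coordinate should transport the present configuration to one governed by Lemma~\ref{lem:conecyc}.

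Concretely, I would introduce the linear involution $\sigma$ of $\mathbb{C}^N$ that negates the $i_0$-th coordinate and fixes all others. Then $\sigma(L_j)=L_j$ for $1\le j\le n-1$ and $\sigma(x_{1,N})=x_{1,N}$, while $\sigma(L_0)=x_{1,N}+x_{i_0,N}+x_{i_1,N}$ and $\sigma(L_n)=x_{1,N}+x_{i_0,N}+x_{i_n,N}$. The family $\sigma(L_0),\dots,\sigma(L_n)$ has exactly the shape demanded by Lemma~\ref{lem:conecyc}, and because $n$ is even that lemma supplies constants $c_j\in\{-1,1\}$ with $\sum_{j=0}^n c_j\,\sigma(L_j)=x_{1,N}+2x_{i_0,N}$. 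Applying $\sigma$ to both sides and using $\sigma^2=\mathrm{id}$ gives $\sum_{j=0}^n c_j L_j=\sigma(x_{1,N}+2x_{i_0,N})=x_{1,N}-2x_{i_0,N}$; replacing each $c_j$ by $-c_j$, which again lies in $\{-1,1\}$, yields $\sum_{j=0}^n(-c_j)L_j=-x_{1,N}+2x_{i_0,N}$, as required.

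The step I expect to be the crux is confirming that $\sigma(L_0),\dots,\sigma(L_n)$ genuinely inherit the matching hypothesis of Lemma~\ref{lem:conecyc}. For the interior indices this is automatic, as $\sigma$ fixes those vectors and the matching condition is assumed here. The delicate points are the two boundary junctions---between $\sigma(L_0)$ and $L_1$, and the wrap-around between $L_{n-1}$ and $\sigma(L_n)$---which force $L_1$ to carry $+x_{i_1,N}$ and $L_{n-1}$ to carry $+x_{i_n,N}$; this is precisely the sign data encoded by the prescribed forms of $L_0$ and $L_n$ at the shared vertices, and it must be reconciled with the exact matching convention in force. If a self-contained argument is preferred, the identity also follows by induction on the even integer $n$: the base case $n=2$ is the direct computation $-L_0+L_1-L_2=-x_{1,N}+2x_{i_0,N}$, and the inductive step mirrors Lemma~\ref{lem:conecyc}, locating a sign change, collapsing three consecutive vectors through $L_k-L_{k+1}+L_{k+2}$ (evaluated via Lemma~\ref{lem:path}), and appealing to the case $n-2$ while holding the boundary vectors $L_0$ and $L_n$ fixed.
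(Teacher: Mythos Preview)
Your involution argument is correct and is a genuinely different route from the paper's. The paper does not reduce to Lemma~\ref{lem:conecyc} at all; instead it strips off the two boundary vectors and applies Lemma~\ref{lem:pathcon} directly to the interior family $L_1,\dots,L_{n-1}$. Since $n$ is even, that lemma (with $n-1$ odd) yields constants $c_j\in\{-1,1\}$ with $\sum_{j=1}^{n-1}c_jL_j=x_{1,N}+x_{i_1,N}+x_{i_n,N}$, and then a two-line computation gives $-L_0-L_n+\sum_{j=1}^{n-1}c_jL_j=-x_{1,N}+2x_{i_0,N}$. So the paper's proof is shorter and more elementary, invoking only the ``path'' lemma rather than the heavier ``cycle'' Lemma~\ref{lem:conecyc}. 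Your symmetry argument, on the other hand, explains conceptually why Lemmas~\ref{lem:conecyc} and~\ref{lem:conecyc2} are twins: they differ by a single coordinate reflection, and the $-x_{1,N}$ on the right-hand side arises precisely from the global sign flip $c_j\mapsto -c_j$.

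One point to tighten: your claim that ``the indices $i_0,\dots,i_n$ are distinct'' is stronger than the stated hypothesis, which only asserts consecutive distinctness $i_0\neq i_1\neq\cdots\neq i_n$. Your argument genuinely needs $i_0\notin\{i_1,\dots,i_n\}$ so that $\sigma$ fixes the interior vectors; the paper's route via Lemma~\ref{lem:pathcon} does not use this, since $i_0$ never enters the interior computation. In every application in the paper the $i_j$ are vertices of a cycle and hence pairwise distinct, so this is harmless in context, but you should either state it as an assumption or note where it comes from. The boundary matching issue you flag (that $L_1$ must carry $+x_{i_1,N}$ and $L_{n-1}$ must carry $+x_{i_n,N}$) is present in the paper's own proof as well, so you are on equal footing there.
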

\begin{proof}
    Note that the collection of vectors $L_j$ for $1\le j\le n-1$ satisfy the hypotheses of Lemma~\ref{lem:pathcon}. So, applying Lemma~\ref{lem:pathcon}, there exists constants $c_j\in \{-1,1\}$ such that $$\sum_{j=1}^{n-1}c_jL_j=x_{1,N}+x_{i_1,N}+x_{i_n,N}.$$ Consequently, we have that 
    \begin{align*}
        -L_0-L_n+\sum_{j=1}^{n-1}c_jL_j&=-x_{1,N}+x_{i_0,N}-x_{i_1,N}-x_{1,N}+x_{i_0,N}-x_{i_n,N}+x_{1,N}+x_{i_1,N}+x_{i_n,N} \\
        &=-x_{1,N}+2x_{i_0,N},
    \end{align*}
    as desired.
\end{proof}

\begin{proposition}\label{prop:noevencycle}
 Let $\mathcal{P}$ be a connected, type-C poset of height one. If $RG(\mathcal{P})$ contains an even cycle, then $\mathcal{P}$ is not contact.
\end{proposition}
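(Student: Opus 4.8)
The plan is to show that $\mathfrak{g}_C(\mathcal{P})$ fails the determinant criterion of Theorem~\ref{thm:det}: I will argue that $\det\varphi(\widehat{C}(\mathfrak{g}_C(\mathcal{P}),\mathscr{B}_C(\mathcal{P})))=0$ for \emph{every} $\varphi\in\mathfrak{g}_C(\mathcal{P})^*$, so that no $\varphi$ can be a contact form. The key structural observation is that, since $\mathcal{P}$ has height one, any two root-vector basis elements of $\mathscr{B}_C(\mathcal{P})$ commute: Lemma~\ref{lem:nono} forbids exactly the adjacency patterns (two monotone dashed edges, or a dashed edge meeting a non-dashed edge at the dashed edge's smaller endpoint) that would be required for a nonzero bracket, and the remaining brackets among the $R_{i,j}$, $R^{\pm}_{i,j}$, and $E_{-i,i}$ vanish by direct computation. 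Consequently each row $\mathbf{\widehat{R}_{i,j}}(\mathcal{P})$ or $\mathbf{\widehat{R}^{\pm}_{i,j}}(\mathcal{P})$ of $\varphi(\widehat{C})$ is supported only in the first column and in the columns indexed by the Cartan elements $D_\ell$, where its entries are fixed multiples of the value of $\varphi$ on the corresponding basis element.

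Now fix $\varphi$ and an even cycle in $RG(\mathcal{P})$, say on vertices $p_0,p_1,\dots,p_n,p_0$ with $n$ odd; since $RG(\mathcal{P})$ has no repeated edges an even cycle has at least four edges, so $n\ge 3$. If $\varphi$ vanishes on any edge basis element $b_j$ of the cycle, then by Remark~\ref{rem:nonzero} the corresponding row is identically zero and $\det\varphi(\widehat{C})=0$ at once; so assume $\varphi(b_j)\neq 0$ for all $j$. Choosing $p_0$ to be the largest-indexed vertex on the cycle, let $\mathbf{R}^{(j)}$ denote the row of $\varphi(\widehat{C})$ indexed by $b_j$ and set $L_j$ to be $-\varphi(b_j)^{-1}$ times the restriction of $\mathbf{R}^{(j)}$ to the first column together with the Cartan columns, reserving the coordinate $x_{1,N}$ for the first column and assigning the columns of the $D_\ell$ to distinct coordinates exceeding $1$. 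Computing the first-column entry $-\varphi(b_j)$ and the Cartan entries $[b_j,D_\ell]$ shows $L_j=x_{1,N}+x_{p_j,N}+x_{p_{j+1},N}$ for a non-dashed edge and $L_j=x_{1,N}\pm x_{p_j,N}\mp x_{p_{j+1},N}$ for a dashed edge, with the sign governed by its orientation.

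These $L_j$ are precisely the vectors of Lemma~\ref{lem:conecyc}, and the poset constraints supply its sign hypotheses: Lemma~\ref{lem:nono} forces a dashed edge descending into a vertex to be followed by a dashed edge ascending out of it (a non-dashed edge cannot meet a dashed edge at its smaller endpoint), which is exactly the required biconditional on consecutive $L_j$, while the choice of $p_0$ as the maximal vertex keeps $L_0$ and $L_n$ out of the excluded form. Since $n$ is odd, Lemma~\ref{lem:conecyc} yields signs $c_j\in\{-1,1\}$ with $\sum_{j=0}^n c_jL_j=0$. As each full row satisfies $\mathbf{R}^{(j)}=-\varphi(b_j)L_j$ on the first and Cartan columns and vanishes elsewhere, this gives the nontrivial relation $\sum_j c_j(-\varphi(b_j))^{-1}\mathbf{R}^{(j)}=0$ among the rows of $\varphi(\widehat{C})$, whence $\det\varphi(\widehat{C})=0$. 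As $\varphi$ was arbitrary, Theorem~\ref{thm:det} shows $\mathfrak{g}_C(\mathcal{P})$ is not contact, i.e.\ $\mathcal{P}$ is not contact. I expect the main obstacle to be the bookkeeping of dashed-edge orientations needed to match the $L_j$ to the hypotheses of Lemma~\ref{lem:conecyc} — in particular justifying the biconditional through Lemma~\ref{lem:nono} and fixing the base vertex $p_0$ — together with the preliminary verification that height one forces all root-vector brackets to vanish; once these are in place the determinant collapses immediately.
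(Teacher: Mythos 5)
Your proposal is correct and follows essentially the same route as the paper: normalize the rows of $\varphi(\widehat{C})$ indexed by the edges of the even cycle, verify via Lemma~\ref{lem:nono} that the resulting vectors satisfy the sign hypotheses of Lemma~\ref{lem:conecyc} (the paper arranges the base vertex so that any dashed edge at $p_0$ has $p_0$ as its larger endpoint, which your choice of $p_0$ maximal on the cycle also guarantees), and conclude that the odd-$n$ case of that lemma produces a vanishing signed sum of rows, so $\varphi(\widehat{C})$ is singular for every $\varphi$. The only differences are presentational — the paper phrases it as a contradiction with a fixed contact form rather than showing the determinant vanishes for all $\varphi$ — so there is nothing substantive to add.
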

\begin{proof}
Assume that $\mathfrak{g}=\mathfrak{g}_C(\mathcal{P})$ is contact and fix a choice of contact form $\varphi\in\mathfrak{g}^*$. Set $N=|V(\mathcal{P})|+|E(\mathcal{P})|+1$ and let $\mathcal{C}$ denote an even cycle of $RG(\mathcal{P})$. We assume that $\mathcal{C}$ is defined by the sequence of vertices $p_0,p_1,\hdots,p_{2k+1},p_0$ where if $\{p_0,p_1\}$ (resp., $\{p_{2k+1},p_0\}$) is dashed, then $p_0>p_1$ (resp., $p_0>p_{2k+1})$. For $0\le j\le 2k+1$, let $R_j$ (resp., $\mathbf{\widehat{R}}_j$) denote the element of $\mathscr{B}_C(\mathcal{P})$ (resp., row of $\varphi(\widehat{C}(\mathfrak{g},\mathscr{B}_C(\mathcal{P})))$) corresponding to $\{p_j,p_{j+1}\}$ when $0\le j<2k+1$ and $\{p_{2k+1},p_0\}$ when $j=2k+1$, i.e.,
$$R_j=\begin{cases}
    R_{p_j,p_{j+1}}, & 0\le j<2k+1,~\{p_j,p_{j+1}\}~\text{is dashed, and}~p_{j}<p_{j+1} \\
    R_{p_{j+1},p_{j}}, & 0\le j<2k+1,~\{p_j,p_{j+1}\}~\text{is dashed, and}~p_{j}>p_{j+1} \\
    R^{\pm}_{p_j,p_{j+1}}, & 0\le j<2k+1~\text{and}~\{p_j,p_{j+1}\}~\text{is non-dashed} \\
    R_{p_{2k+1},p_0}, & j=2k+1~\text{and}~\{p_0,p_{2k+1}\}~\text{is dashed} \\
    R^{\pm}_{p_{2k+1},p_0}, & j=2k+1~\text{and}~\{p_0,p_{2k+1}\}~\text{is non-dashed}
\end{cases}$$ and similarly for $\mathbf{\widehat{R}}_j$. Order the elements of $\mathscr{B}_C(\mathcal{P})$ so that
\begin{itemize}
    \item $D_{p_i}$ for $0\le i\le 2k+1$ occur first, listed in increasing order of $i$ followed by
    \item $D_{i}$ for $i\in\mathcal{P}^+\backslash\{p_0,\hdots,p_{2k+1}\}$ in increasing order of $i$ in $\mathbb{Z}$ followed by
    \item $E_{-i,i}$ in increasing order of $i$ in $\mathbb{Z}$ followed by
    \item $R^{\pm}_{i,j}$ for $i<j$ such that $-i\prec j$ and $-j\prec i$ in increasing lexicographic order of $(i,j)$ followed by
    \item $R_{i,j}$ for $i<j$ such that $-j\prec -i$ and $i\prec j$ in increasing lexicographic order of $(i,j)$.
\end{itemize}
With this ordering of $\mathscr{B}_C(\mathcal{P})$, for $0\le j\le 2k+1$, we have that $$\mathbf{\widehat{R}}_j=\begin{cases}
    -\varphi(R_j)(x_{1,N}-x_{j+2,N}+x_{j+3,N}), & 0\rn{<}  j<2k+1,~\{p_j,p_{j+1}\}~\text{is dashed, and}~p_{j}<p_{j+1} \\
    -\varphi(R_j)(x_{1,N}+x_{j+2,N}-x_{j+3,N}), & 0\le j<2k+1,~\{p_j,p_{j+1}\}~\text{is dashed, and}~p_{j}>p_{j+1} \\
    -\varphi(R_j)(x_{1,N}+x_{j+2,N}+x_{j+3,N}), & 0\le j<2k+1~\text{and}~\{p_j,p_{j+1}\}~\text{is non-dashed} \\
    -\varphi(R_j)(x_{1,N}+x_{2,N}-x_{2k+3,N}), & j=2k+1~\text{and}~\{p_0,p_{2k+1}\}~\text{is dashed} \\
    -\varphi(R_j)(x_{1,N}+x_{2,N}+x_{2k+3,N}), & j=2k+1~\text{and}~\{p_0,p_{2k+1}\}~\text{is non-dashed}.
\end{cases}$$
Since $\varphi$ is a contact form, considering Remark~\ref{rem:nonzero} we have that $\varphi(R_j)\neq 0$ for $0\le j\le 2k+1$. Thus, we can define the collection of vectors $L_j=\frac{1}{-\varphi(R_j)}\mathbf{\widehat{R}}_j$ for $0\le j\le 2k+1$. Considering our assumptions on the edges of $\mathcal{C}$ along with Lemma~\ref{lem:nono}, it is straightforward to verify that the collection of vectors $L_j$ for $0\le j\le 2k+1$ satisfy the hypotheses of Lemma~\ref{lem:conecyc}. Consequently, applying Lemma~\ref{lem:conecyc}, there exist constants $c_j\in\{-1,1\}$ such that $\sum_{j=0}^{2k+1}c_jL_j=0$; but this implies that $\varphi\left(\widehat{C}(\mathfrak{g},\mathscr{B}(\mathfrak{g}))\right)$ does not have full rank, which contradicts that $\varphi$ is a contact form. Therefore, $\mathfrak{g}_C(\mathcal{P})$ is not contact.
\end{proof}


\begin{proposition}\label{prop:noselfloops}
Let $\mathcal{P}$ be a connected, type-C poset of height one. If $RG(\mathcal{P})$ contains two self-loops, then $\mathfrak{g}_C(\mathcal{P})$ is not contact.
\end{proposition}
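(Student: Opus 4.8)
\emph{Plan.} I will argue directly via the determinant criterion of Theorem~\ref{thm:det}: writing $\mathfrak g=\mathfrak g_C(\mathcal P)$, it suffices to show that $\det\varphi(\widehat C(\mathfrak g,\mathscr B_C(\mathcal P)))=0$ for \emph{every} $\varphi\in\mathfrak g^*$. Denote the two self-loops by $-i\prec i$ and $-\ell\prec\ell$ with $i\neq\ell$. A first reduction handles most configurations through the index: if $RG(\mathcal P)$ contains any cycle besides the two self-loops, then $|E(\mathcal P)|>|V(\mathcal P)|+1$, and since $RG(\mathcal P)$ is connected and contains a self-loop (an odd cycle) we have $\eta(\mathcal P)=0$, so Theorem~\ref{thm:index1} yields $\ind\mathfrak g=|E(\mathcal P)|-|V(\mathcal P)|\ge 2\neq 1$ and $\mathfrak g$ is not contact. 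Hence I may assume $RG(\mathcal P)$ is a spanning tree $T$ together with the two self-loops.

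\emph{Reduction to a linear dependence.} Order $\mathscr B_C(\mathcal P)$ with the $D_j$ first and the edge/self-loop generators last, and set $\beta_b=\varphi(b)$. By the height-one block form of Figure~\ref{fig:h01m} the lower-right block of $\varphi(C(\mathfrak g,\mathscr B_C(\mathcal P)))$ vanishes, so for each edge or self-loop generator $b$ the row $\widehat{\mathbf b}(\mathcal P)$ of $\varphi(\widehat C(\mathfrak g,\mathscr B_C(\mathcal P)))$ is supported on the $\mathbf I$-column, where it equals $-\beta_b$, and on the $D$-columns, where it equals $\beta_b$ times the corresponding row of $M(\mathcal P)$; in particular $\widehat{\mathbf E_{-i,i}}(\mathcal P)$ is $-\beta_{E_{-i,i}}$ in the $\mathbf I$-column and $-2\beta_{E_{-i,i}}$ in the $D_i$-column, and similarly at $\ell$. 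By Remark~\ref{rem:nonzero} every $\beta_b\neq 0$. Thus if I can find scalars $\tau_b$, not all zero, satisfying $\sum_b\tau_b\,(\text{row }b\text{ of }M(\mathcal P))=0$ and $\sum_b\tau_b=0$, then $\sum_b(\tau_b/\beta_b)\,\widehat{\mathbf b}(\mathcal P)$ has vanishing $\mathbf I$-entry ($=-\sum_b\tau_b$) and vanishing $D$-part, giving a nontrivial dependence among the rows of $\varphi(\widehat C(\mathfrak g,\mathscr B_C(\mathcal P)))$ and forcing its determinant to vanish for all $\varphi$.

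\emph{Constructing $\tau$.} A nonzero left-null vector of $M(\mathcal P)$ exists because $M(\mathcal P)$ has $|V(\mathcal P)|$ columns but $|V(\mathcal P)|+1$ rows (the $|V(\mathcal P)|-1$ tree edges plus the two self-loops). The extra requirement $\sum_b\tau_b=0$ is equivalent to $\mathbf 1$ lying in the image of $M(\mathcal P)$, i.e.\ to solvability of $M(\mathcal P)\,d=\mathbf 1$. Unwinding the rows of $M(\mathcal P)$, this system reads $d_i=d_\ell=-\tfrac12$ (from the self-loop rows $-2x_{i,|V|}$), together with $d_a-d_b=1$ across each dashed edge $\{a,b\}$ with $a<b$ and $d_a+d_b=-1$ across each non-dashed edge. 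Rooting $T$ at $i$ and setting $d_i=-\tfrac12$, the edge equations determine $d$ uniquely on all of $T$, and the only constraint still to be checked is $d_\ell=-\tfrac12$, which depends solely on the unique $i$–$\ell$ path in $T$.

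\emph{The path computation and the main obstacle.} Along this path a non-dashed edge preserves the value $-\tfrac12$, while a dashed edge changes it by $\pm1$, so the crux is to show that each maximal run of consecutive dashed edges restores the value. Here Lemma~\ref{lem:nono} supplies exactly the structure needed: at a self-loop endpoint the incident dashed edge has that vertex as its larger endpoint (part (a)), a dashed edge meeting a non-dashed edge shares with it its larger endpoint (part (c)), and two consecutive dashed edges meet at a common local extremum (part (d)). Consequently every maximal dashed run begins with a step to a strictly smaller vertex and ends with a step to a strictly larger vertex, alternating in between; it therefore has even length and its $\pm1$ shifts cancel, returning the value to $-\tfrac12$. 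Hence $d_\ell=-\tfrac12$, the system $M(\mathcal P)\,d=\mathbf 1$ is solvable, the desired $\tau$ exists, and $\mathfrak g_C(\mathcal P)$ is not contact. I expect this parity argument for the dashed runs to be the delicate point, as it is what rules out the accidental index-one coincidence from making the algebra contact.
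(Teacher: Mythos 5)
Your proof is correct, but it reaches the linear dependence by a genuinely different mechanism than the paper. The paper's proof does not reduce to the spanning-tree case at all: it works with an arbitrary connected $RG(\mathcal{P})$ containing two self-loops, takes the path $p=p_0,\dots,p_k=q$ between them, normalizes the corresponding rows of $\varphi(\widehat{C}(\mathfrak{g},\mathscr{B}_C(\mathcal{P})))$, and invokes the inductive Lemma~\ref{lem:pathcon} to produce explicit signs $c_j\in\{-1,1\}$ with $\sum_j c_jL_j$ equal to $x_{1,N}+x_{2,N}+x_{k+2,N}$ or $x_{2,N}-x_{k+2,N}$ according to the parity of $k$, which is then cancelled against $\tfrac{1}{2\varphi(E_{-p_0,p_0})}\mathbf{\widehat{E}_{p_0,p_0}}(\mathcal{P})$ and $(-1)^{k+1}\tfrac{1}{2\varphi(E_{-p_k,p_k})}\mathbf{\widehat{E}_{p_k,p_k}}(\mathcal{P})$. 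You instead first dispose of every configuration with an extra cycle via Theorem~\ref{thm:index1} (legitimate, since that theorem is established before this section), and in the remaining tree-plus-two-loops case you obtain the dependence non-constructively: a left-null vector of $M(\mathcal{P})$ exists by the row/column count, and the additional condition $\sum_b\tau_b=0$ needed to kill the $\mathbf{I}$-column is forced by solvability of $M(\mathcal{P})d=\mathbf{1}$, which you verify by propagating the potential $d$ along the tree. The combinatorial core of your solvability check --- that Lemma~\ref{lem:nono}(a), (c), (d) force every maximal dashed run on the $i$--$\ell$ path to begin with a descent, end with an ascent, and alternate in between, hence have even length and net shift zero --- is exactly the structure that the hypotheses of Lemma~\ref{lem:pathcon} encode, so the two arguments are cousins; but yours trades the explicit sign bookkeeping of Lemma~\ref{lem:pathcon} for a dimension count plus a duality observation ($\tau^TM=0$ and $Md=\mathbf{1}$ imply $\tau^T\mathbf{1}=0$), at the cost of needing the index reduction first and being less uniform with the companion Propositions~\ref{prop:noevencycle}--\ref{prop:noocycle3}, which all run on the same lemma machinery. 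One cosmetic point: Remark~\ref{rem:nonzero} gives $\beta_b\neq 0$ only under the assumption that $\varphi$ is a contact form, so your argument is really a contradiction argument (as in the paper) rather than a direct verification that the determinant vanishes for every $\varphi$; this does not affect correctness.
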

\begin{proof}
Assume that $\mathfrak{g}=\mathfrak{g}_C(\mathcal{P})$ is contact and fix a choice of contact form $\varphi\in\mathfrak{g}^*$. Set $N=|V(\mathcal{P})|+|E(\mathcal{P})|+1$ and let $p$ and $q$ denote two distinct vertices of $RG(\mathcal{P})$ that define self-loops. Since $RG(\mathcal{P})$ is connected, there exists a path in $RG(\mathcal{P})$ between $p$ and $q$ defined, say, by the sequence of vertices $p=p_0,p_1,\hdots,p_{k-1},p_k=q$. For $0\le j\le k-1$, let $R_j$ (resp., $\mathbf{\widehat{R}}_j$) denote the element of $\mathscr{B}_C(\mathcal{P})$ (resp., row of $\varphi(\widehat{C}(\mathfrak{g},\mathscr{B}_C(\mathcal{P})))$) corresponding to $\{p_j,p_{j+1}\}$, i.e.,
$$R_j=\begin{cases}
    R_{p_j,p_{j+1}}, & \{p_j,p_{j+1}\}~\text{is dashed and}~p_{j}<p_{j+1} \\
    R_{p_{j+1},p_{j}}, & \{p_j,p_{j+1}\}~\text{is dashed and}~p_{j}>p_{j+1} \\
    R^{\pm}_{p_j,p_{j+1}}, & \{p_j,p_{j+1}\}~\text{is non-dashed}
\end{cases}$$ and similarly for $\mathbf{\widehat{R}}_j$. Order the elements of $\mathscr{B}_{C}(\mathcal{P})$ so that
\begin{itemize}
    \item $D_{p_i}$ for $0\le i\le k$ occur first, listed in increasing order of $i$ followed by
    \item $D_{p}$ for $p\in\mathcal{P}^+\backslash\{p_0,\hdots,p_k\}$ in increasing order of $p$ in $\mathbb{Z}$ followed by
    \item $E_{-i,i}$ in increasing order of $i$ in $\mathbb{Z}$ followed by
    \item $R^{\pm}_{i,j}$ for $i<j$ such that $-i\prec j$ and $-j\prec i$ in increasing lexicographic order of $(i,j)$ followed by
    \item $R_{i,j}$ for $i<j$ such that $-j\prec -i$ and $i\prec j$ in increasing lexicographic order of $(i,j)$.
\end{itemize}
With this ordering of $\mathscr{B}_C(\mathcal{P})$, we have
$$\mathbf{\widehat{E}_{-p_0,p_0}(\mathcal{P})}=-\varphi(E_{-p_0,p_0})(x_{1,N}+2x_{2,N}),$$
$$\mathbf{\widehat{E}_{-p_k,p_k}(\mathcal{P})}=-\varphi(E_{-p_k,p_k})(x_{1,N}+2x_{k+2,N}),$$ and $$\mathbf{\widehat{R}}_j=\begin{cases}
    -\varphi(R_j)(x_{1,N}-x_{j+2,N}+x_{j+3,N}), & \{p_j,p_{j+1}\}~\text{is dashed and}~p_{j}<p_{j+1} \\
    -\varphi(R_j)(x_{1,N}+x_{j+2,N}-x_{j+3,N}), & \{p_j,p_{j+1}\}~\text{is dashed and}~p_{j}>p_{j+1} \\
    -\varphi(R_j)(x_{1,N}+x_{j+2,N}+x_{j+3,N}), & \{p_j,p_{j+1}\}~\text{is non-dashed}
\end{cases}$$
for $0\le j\le k-1$. Since $\varphi$ is a contact form, considering Remark~\ref{rem:nonzero}, we have that $\varphi(E_{-p_0,p_0}),\varphi(E_{-p_k,p_k})\neq 0$ and $\varphi(R_j)\neq 0$ for $0\le j\le k-1$. Thus, we can define the collection of vectors $L_j=\frac{1}{-\varphi(R_j)}\mathbf{\widehat{R}}_j$ for $0\le j\le k-1$. Considering Lemma~\ref{lem:nono}, it is straightforward to verify that the collection of vectors $L_j$ for $0\le j\le k-1$ satisfy the hypotheses of Lemma~\ref{lem:pathcon}. Consequently, applying Lemma~\ref{lem:pathcon}, there exist constants $c_j\in\{-1,1\}$ such that $$\sum_{j=0}^{k-1}c_jL_j=\begin{cases}
x_{1,N}+x_{2,N}+x_{k+2,N}, & k\text{ is odd} \\
x_{2,N}-x_{k+2,N}, & k\text{ is even}
\end{cases}$$ so that $$\frac{1}{2\varphi(E_{-p_0,p_0})}\mathbf{\widehat{E}_{p_0,p_0}(\mathcal{P})}+(-1)^{k+1}\frac{1}{2\varphi(E_{-p_k,p_k})}\mathbf{\widehat{E}_{p_k,p_k}(\mathcal{P})}+\sum_{j=0}^{k-1}c_jL_j=0;$$
but this implies that $\varphi\left(\widehat{C}(\mathfrak{g},\mathscr{B}(\mathfrak{g}))\right)$ does not have full rank, contradicting that $\varphi$ is a contact form. Therefore, $\mathfrak{g}_C(\mathcal{P})$ is not contact.
\end{proof}

\begin{proposition}\label{prop:noocycle1}
Let $\mathcal{P}$ be a connected, type-C poset of height one. If $RG(\mathcal{P})$ contains two odd cycles that share more than one vertex, then $\mathfrak{g}_C(\mathcal{P})$ is not contact.
\end{proposition}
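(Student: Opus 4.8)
The plan is to avoid any direct manipulation of the rows of $\varphi(\widehat{C}(\mathfrak{g},\mathscr{B}_C(\mathcal{P})))$ and instead reduce to the even-cycle case, which is already settled by Proposition~\ref{prop:noevencycle}. Precisely, I would prove the purely graph-theoretic statement that if $RG(\mathcal{P})$ contains two distinct odd cycles sharing more than one vertex, then $RG(\mathcal{P})$ must also contain an even cycle; once this is in hand, Proposition~\ref{prop:noevencycle} immediately gives that $\mathfrak{g}_C(\mathcal{P})$ is not contact, since $\mathcal{P}$ is connected by hypothesis.

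First I would record two structural observations. Neither of the two odd cycles can be a self-loop, since a self-loop is supported on a single vertex and so cannot share more than one vertex with another cycle; hence $C_1$ and $C_2$ are genuine cycles of length at least three. Moreover, away from self-loops $RG(\mathcal{P})$ is simple: between any pair $\{i,j\}$ there is at most one non-dashed edge (from $-i\prec j$) and at most one dashed edge (from $-i\prec -j$), and by Lemma~\ref{lem:nono}(b) these cannot both occur. Consequently each $C_i$ is a $2$-connected subgraph and therefore lies inside a single block of $RG(\mathcal{P})$. Because distinct blocks of a graph meet in at most one vertex, the assumption that $C_1$ and $C_2$ share at least two vertices forces them into a common block $B$.

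Next I would show that $B$ contains an even cycle. Since $B$ carries the two distinct cycles $C_1$ and $C_2$, it is neither a single edge nor a single cycle; being a block, it is $2$-connected and strictly larger than a cycle. A standard ear-decomposition argument then produces a theta subgraph inside $B$, namely two vertices joined by three internally disjoint paths of lengths $\ell_1,\ell_2,\ell_3$. By the pigeonhole principle two of these lengths have the same parity, and the union of the corresponding two paths is a cycle of even length lying in $RG(\mathcal{P})$. Applying Proposition~\ref{prop:noevencycle} completes the argument.

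The main obstacle is exactly the graph-theoretic step that two odd cycles sharing more than one vertex force an even cycle; the delicate point is that two shared vertices need not split $C_1$ and $C_2$ into a clean pair of internally disjoint arcs, since the non-shared portions may overlap, and this is precisely why I would route the argument through blocks and a theta subgraph rather than naively concatenating arcs. As a fallback, should the even-cycle reduction require the cycle to satisfy extra sign constraints, I would mirror the computational method of Propositions~\ref{prop:noevencycle} and~\ref{prop:noselfloops}: apply Lemma~\ref{lem:conecyc} to each odd cycle, basing both applications at a common shared vertex so that each yields a row-combination equal to $x_{1,N}+2x_{i_0,N}$ with the same $i_0$, and then subtract the two combinations to obtain a nontrivial linear dependence among the rows of $\varphi(\widehat{C}(\mathfrak{g},\mathscr{B}_C(\mathcal{P})))$, contradicting Theorem~\ref{thm:det}; the dependence is nontrivial because $C_1\neq C_2$ guarantees an edge, hence a row, appearing in exactly one of the two combinations.
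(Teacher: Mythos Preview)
Your proposal is correct and follows the same strategy as the paper: reduce to Proposition~\ref{prop:noevencycle} by showing that two distinct odd cycles sharing at least two vertices force an even cycle in $RG(\mathcal{P})$. The only difference is tactical---the paper constructs the even cycle by hand from arcs of $\mathcal{C}_1$ and $\mathcal{C}_2$ between two shared vertices, whereas you argue via blocks and a theta subgraph; your fallback is unnecessary, since Proposition~\ref{prop:noevencycle} imposes no additional constraints on the even cycle.
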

\begin{proof}
    Let $\mathcal{C}_1$ and $\mathcal{C}_2$ be two odd cycles in $RG(\mathcal{P})$ that share more than one vertex. We claim that $RG(\mathcal{P})$ contains an even cycle. Assume $\mathcal{C}_i$ consists of $2k_i+1$ vertices with $k_i\in\mathbb{Z}_{> 0}$ for $i=1,2$. Starting from one intersection point between the two, say $p$, move along $\mathcal{C}_1$ until reaching another intersection point, say $q$, and call the resulting path $P_1$. Now, there must exist a path within $\mathcal{C}_2$ connecting $p$ and $q$ that contains no other intersection points with $\mathcal{C}_1$, call such a path $P_2$. Form the subgraph $\mathcal{C}_3$ of $RG(\mathcal{P})$ whose edges are those of $P_1$ and $P_2$. Note that either 
    \begin{enumerate}
        \item[(1)] $P_1=P_2$ so that $\mathcal{C}_3$ is the graph consisting of $p$, $q$, and the edge between them or
        \item[(2)] $\mathcal{C}_3$ is a cycle whose intersection with $\mathcal{C}_i$ is $P_i$ for $i=1,2$.
    \end{enumerate}
    In case (1), consider the subgraph $\mathcal{C}_4$ of $RG(\mathcal{P})$ induced by all edges of both $\mathcal{C}_1$ and $\mathcal{C}_2$ except the shared edge between $p$ and $q$. Note that $\mathcal{C}_4$ is a cycle containing $$(2k_1+1)+(2k_2+1)-2=2(k_1+k_2)$$ vertices, i.e., $\mathcal{C}_4$ is an even cycle and the claim follows. In case (2), we may assume that $\mathcal{C}_3$ is an odd cycle containing $2k_3+1$ vertices for $k_3\in\mathbb{Z}_{> 0}$. Now, if $P_1$ contains $t$ vertices for $t\in \mathbb{Z}_{>1}$, then consider the subgraph $\mathcal{C}_4$ of $RG(\mathcal{P})$ induced by all edges of both $\mathcal{C}_1$ and $\mathcal{C}_3$ except those corresponding to $P_1$. Note that $\mathcal{C}_4$ is a cycle containing $$(2k_1+1)+(2k_3+1)-2(t-2)-2=2(k_1+k_3-t+2)$$ vertices, i.e., $\mathcal{C}_4$ is an even cycle and the claim follows. We have shown that, in both cases, $RG(\mathcal{P})$ contains an even cycle; therefore, $\mathfrak{g}_C(\mathcal{P})$ is not contact, by Proposition~\ref{prop:noevencycle}.
\end{proof}

\begin{proposition}\label{prop:noocycle2}
Let $\mathcal{P}$ be a connected, type-C poset of height one. If $RG(\mathcal{P})$ contains two odd cycles that share exactly one vertex, then $\mathfrak{g}_C(\mathcal{P})$ is not contact.
\end{proposition}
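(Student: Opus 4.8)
The plan is to mimic the strategy of Proposition~\ref{prop:noevencycle}: assume $\mathfrak{g}=\mathfrak{g}_C(\mathcal{P})$ is contact with contact form $\varphi$, set $N=|V(\mathcal{P})|+|E(\mathcal{P})|+1$, and exhibit a nontrivial linear dependence among the rows of $\varphi(\widehat{C}(\mathfrak{g},\mathscr{B}_C(\mathcal{P})))$, contradicting Theorem~\ref{thm:det}. Let $\mathcal{C}_1$ and $\mathcal{C}_2$ be the two odd cycles meeting in the single vertex $p$. Since they share only $p$, they share no edge, so the two associated families of edge-rows are disjoint. I would order $\mathscr{B}_C(\mathcal{P})$ exactly as in Proposition~\ref{prop:noevencycle}, except placing $D_p$ first, so that the shared vertex $p$ corresponds to the column $x_{2,N}$, followed by the remaining vertices of $\mathcal{C}_1$ and then those of $\mathcal{C}_2$ (all distinct, since the cycles meet only at $p$).

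For each cycle separately, dividing each relevant row $\mathbf{\widehat{R}}_j$ by $-\varphi(R_j)$ (nonzero by Remark~\ref{rem:nonzero}) produces a family of vectors $L_j$ of exactly the form treated in Lemmas~\ref{lem:conecyc} and~\ref{lem:conecyc2}, with the role of $i_0$ played by the column of $p$. Because each $\mathcal{C}_i$ has an odd number of vertices, the relevant index is even, so the appropriate lemma yields constants $c_j\in\{-1,1\}$ for which $\sum_j c_jL_j$ equals either $x_{1,N}+2x_{2,N}$ (via Lemma~\ref{lem:conecyc}) or $-x_{1,N}+2x_{2,N}$ (via Lemma~\ref{lem:conecyc2}), according to whether the two edges of $\mathcal{C}_i$ incident to $p$ are "positively" oriented (at least one non-dashed, or both dashed with $p$ larger than its neighbor) or are both dashed with $p$ smaller than its neighbor. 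As in Proposition~\ref{prop:noevencycle}, the verification that the $L_j$ meet the hypotheses of these lemmas is routine given Lemma~\ref{lem:nono}.

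The key step, and the main obstacle, is to rule out the mixed situation in which one cycle produces $+x_{1,N}+2x_{2,N}$ while the other produces $-x_{1,N}+2x_{2,N}$, since only matching signs of the $x_{1,N}$-term let the two combinations cancel. I would resolve this entirely at the vertex $p$ using Lemma~\ref{lem:nono}: among the edges incident to $p$, parts (a), (c), and (d) force that a dashed edge at $p$ with $p$ smaller than its neighbor can occur only when \emph{every} edge at $p$ is dashed with $p$ smaller; in particular the presence of any non-dashed edge at $p$ (a self-loop included) forces every dashed edge at $p$ to have $p$ larger. Hence $\mathcal{C}_1$ and $\mathcal{C}_2$ are simultaneously of the $+$-type or simultaneously of the $-$-type, the two combinations coincide, and their difference is a nontrivial combination of distinct rows equal to $0$. (When one of the cycles is a self-loop at $p$, its combination is simply the single row $\mathbf{\widehat{E}_{-p,p}}(\mathcal{P})$, which is of $+$-type, and the same dichotomy applies.) This makes $\varphi(\widehat{C}(\mathfrak{g},\mathscr{B}(\mathfrak{g})))$ rank-deficient, contradicting Theorem~\ref{thm:det}; therefore $\mathfrak{g}_C(\mathcal{P})$ is not contact.
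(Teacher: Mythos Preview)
Your proposal is correct and follows essentially the same approach as the paper. The paper treats the self-loop case separately and then splits the remaining situation into three subcases according to the nature of the four edges at $p$; your observation that Lemma~\ref{lem:nono} forces every edge at $p$ to be simultaneously of ``$+$-type'' or simultaneously of ``$-$-type'' is precisely the structural fact underlying that case split, so you have compressed their enumeration into a single clean dichotomy.
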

\begin{proof}
    Assume that $\mathfrak{g}=\mathfrak{g}_C(\mathcal{P})$ is contact and fix a contact form $\varphi\in\mathfrak{g}^*$. Set $N=|V(\mathcal{P})|+|E(\mathcal{P})|+1$. Let $\mathcal{C}_1$ and $\mathcal{C}_2$ denote the two odd cycles, and $p$ denote the shared vertex. Throughout, for $i=1,2$, we assume that if $\mathcal{C}_i$ is not a self-loop, then $\mathcal{C}_i$ is defined by the sequence of vertices $p=p^i_0,p^i_1,\hdots,p^i_{k_i},p^i_0=p$. Moreover, for $i=1,2$ and $0\le j\le k_i$, we let $R^i_j$ (resp., $\mathbf{\widehat{R}}^i_j$) denote the element of $\mathscr{B}_C(\mathcal{P})$ (resp., row of $\varphi(\widehat{C}(\mathfrak{g},\mathscr{B}_C(\mathcal{P})))$) corresponding to $\{p^i_j,p^i_{j+1}\}$ when $0\le j<k_i$ and $\{p,p^i_k\}$ when $j=k_i$, i.e.,
$$R^i_j=\begin{cases}
    R_{p^i_j,p^i_{j+1}}, & 0\le j<k_i,~\{p^i_j,p^i_{j+1}\}~\text{is dashed, and}~p^i_{j}<p^i_{j+1} \\
    R_{p^i_{j+1},p^i_{j}}, & 0\le j<k_i,~\{p^i_j,p^i_{j+1}\}~\text{is dashed, and}~p^i_{j}>p^i_{j+1} \\
    R^{\pm}_{p^i_j,p^i_{j+1}}, & 0\le j<k_i~\text{and}~\{p^i_j,p^i_{j+1}\}~\text{is non-dashed} \\
    R_{p,p^i_{k_i}}, & j=k_i,~\{p,p^i_{k_i}\}~\text{is dashed, and}~p<p^i_{k_i} \\
    R_{p^i_{k_i},p}, & j=k_i,~\{p,p^i_{k_i}\}~\text{is dashed, and}~p>p^i_{k_i} \\
    R^{\pm}_{p,p^i_{k_i}}, & j=k_i~\text{and}~\{p,p^i_{k_i}\}~\text{is non-dashed}
\end{cases}$$ and similarly for $\mathbf{\widehat{R}}^i_j$. Order the elements of $\mathscr{B}_{C}(\mathcal{P})$ so that
\begin{itemize}
    \item $D_{p^1_j}$, for $0\le j\le k_1$, occur first listed in increasing order of $j$ followed by
    \item $D_{p^2_j}$, for $1\le j\le k_2$, listed in increasing order of $j$ followed by
    \item $D_{q}$, for $q\in\mathcal{P}^+\backslash\{p^1_0,\hdots,p^1_{k_1},p^2_0,\hdots,p^2_{k_2}\}$, in increasing order of $q$ in $\mathbb{Z}$ followed by
    \item $R^{\pm}_{i,j}$, for $i<j$, such that $-i\prec j$ and $-j\prec i$ in increasing lexicographic order of $(i,j)$ followed by
    \item $R_{i,j}$, for $i<j$, such that $-j\prec -i$ and $i\prec j$ in increasing lexicographic order of $(i,j)$. 
\end{itemize}
With this ordering, if $\mathcal{C}_1$ is not a self-loop, then $$\mathbf{\widehat{R}}^1_j=\begin{cases}
    -\varphi(R^1_j)(x_{1,N}-x_{j+2,N}+x_{j+3,N}), & 0\le j<k_1,~\{p^1_j,p^1_{j+1}\}~\text{is dashed, and}~p^1_{j}<p^1_{j+1} \\
    -\varphi(R^1_j)(x_{1,N}+x_{j+2,N}-x_{j+3,N}), & 0\le j<k_1,~\{p^1_j,p^1_{j+1}\}~\text{is dashed, and}~p^1_{j}>p^1_{j+1} \\
    -\varphi(R^1_j)(x_{1,N}+x_{j+2,N}+x_{j+3,N}), & 0\le j<k_1~\text{and}~\{p^1_j,p^1_{j+1}\}~\text{is non-dashed} \\
    -\varphi(R^1_j)(x_{1,N}-x_{2,N}+x_{k_1+2,N}), & j=k_1,~\{p,p^1_{k_1}\}~\text{is dashed, and}~p<p_{k_1}^1 \\
    -\varphi(R^1_j)(x_{1,N}+x_{2,N}-x_{k_1+2,N}), & j=k_1,~\{p,p^1_{k_1}\}~\text{is dashed, and}~p>p_{k_1}^1 \\
    -\varphi(R^1_j)(x_{1,N}+x_{2,N}+x_{k_1+2,N}), & j=k_1~\text{and}~\{p,p^1_{k_1}\}~\text{is non-dashed}
\end{cases}$$
for $0\le j\le k_1$. Similarly, if $\mathcal{C}_2$ is not a self-loop, then $$\mathbf{\widehat{R}}^2_j=\begin{cases}
    -\varphi(R^2_j)(x_{1,N}-x_{2,N}+x_{k_1+3,N}), & j=0,~\{p,p^2_1\}~\text{is dashed, and}~p<p^2_1 \\
    -\varphi(R^2_j)(x_{1,N}+x_{2,N}-x_{k_1+3,N}), & j=0,~\{p,p^2_1\}~\text{is dashed, and}~p>p^2_1 \\
    -\varphi(R^2_j)(x_{1,N}+x_{2,N}+x_{k_1+3,N}), & j=0~\text{and}~\{p,p^2_{1}\}~\text{is non-dashed} \\
    -\varphi(R^2_j)(x_{1,N}-x_{j+k_1+2,N}+x_{j+k_1+3,N}), & 0< j<k_2,~\{p^2_j,p^2_{j+1}\}~\text{is dashed, and}~p^2_{j}<p^2_{j+1} \\
    -\varphi(R^2_j)(x_{1,N}+x_{j+k_1+2,N}-x_{j+k_1+3,N}), & 0< j<k_2,~\{p^2_j,p^2_{j+1}\}~\text{is dashed, and}~p^2_{j}>p^2_{j+1} \\
    -\varphi(R^2_j)(x_{1,N}+x_{j+k_1+2,N}+x_{j+k_1+3,N}), & 0< j<k_2~\text{and}~\{p^2_j,p^2_{j+1}\}~\text{is non-dashed} \\
    -\varphi(R^2_j)(x_{1,N}-x_{2,N}+x_{k_1+k_2+2,N}), & j=k_2,~\{p,p^2_{k_2}\}~\text{is dashed, and}~p<p_{k_2}^2 \\
    -\varphi(R^2_j)(x_{1,N}+x_{2,N}-x_{k_1+k_2+2,N}), & j=k_2,~\{p,p^2_{k_2}\}~\text{is dashed, and}~p>p_{k_2}^2 \\
    -\varphi(R^2_j)(x_{1,N}+x_{2,N}+x_{k_1+k_2+2,N}), & j=k_2~\text{and}~\{p,p^2_{k_2}\}~\text{is non-dashed}
\end{cases}$$ for $0\le j\le k_2$. Finally, if one of $\mathcal{C}_1$ or $\mathcal{C}_2$ is a self-loop, then $$\mathbf{\widehat{E}_{-p,p}(\mathcal{P})}=-\varphi(E_{-p,p})(x_{1,N}+2x_{2,N}).$$ Considering Lemma~\ref{lem:nono}, there are two cases.
\bigskip

\noindent
\textbf{Case 1:} $\mathcal{C}_1$ or $\mathcal{C}_2$ is a self-loop. Without loss of generality, assume that $\mathcal{C}_2$ is a self-loop. Since $\varphi$ is a contact form, considering Remark~\ref{rem:nonzero}, we have that $\varphi(E_{-p,p})\neq 0$ and $\varphi(R^1_j)\neq 0$ for $0\le j\le k_1$. Thus, we can define the collection of vectors $L_j=\frac{1}{-\varphi(R^1_j)}\mathbf{\widehat{R}}^1_j$ for $0\le j\le k_1$. Considering Lemma~\ref{lem:nono}, it is straightforward to verify that the collection of vectors $L_j$ for $0\le j\le k_1$ satisfy the hypotheses of Lemma~\ref{lem:conecyc}. Since $k_1$ is even by assumption, applying Lemma~\ref{lem:conecyc}, we find that there exist constants $c_j\in \{-1,1\}$ for $0\le j\le k_1$ such that $$\sum_{j=0}^{k_1}c_jL_j=x_{1,N}+2x_{2,N}.$$ However, this implies that $$\frac{1}{\varphi(E_{-p,p})}\mathbf{\widehat{E}_{-p,p}(\mathcal{P})}+\sum_{j=0}^{k_1}c_jL_j=0,$$ i.e., $\varphi\left(\widehat{C}(\mathfrak{g},\mathscr{B}(\mathfrak{g}))\right)$ does not have full rank, contradicting that $\varphi$ is a contact form. Therefore, $\mathfrak{g}_C(\mathcal{P})$ is not contact.
\bigskip

\noindent
\textbf{Case 2:} Neither $\mathcal{C}_1$ nor $\mathcal{C}_2$ is a self-loop and one of the following holds:
\begin{enumerate}
    \item[(1)] at least one of $\{p,p_1^1\}$, $\{p,p_1^2\}$, $\{p,p_{k_1}^1\}$, and $\{p,p_{k_2}^2\}$ is non-dashed;
    \item[(2)] $\{p,p_1^1\}$, $\{p,p_1^2\}$, $\{p,p_{k_1}^1\}$, and $\{p,p_{k_2}^2\}$ are all dashed and $p^1_1,p_1^2<p>p^1_{k_1},p^2_{k_2}$; or
    \item[(3)] $\{p,p_1^1\}$, $\{p,p_1^2\}$, $\{p,p_{k_1}^1\}$, and $\{p,p_{k_2}^2\}$ are all dashed and $p^1_1,p_1^2>p<p^1_{k_1},p^2_{k_2}$.
\end{enumerate}
Since $\varphi$ is a contact form, considering Remark~\ref{rem:nonzero} we have that $\varphi(R^i_j)\neq 0$ for $i=1,2$ and $0\le j\le k_i$. Thus, we can define the collection of vectors $L^i_j=\frac{1}{-\varphi(R^i_j)}\mathbf{\widehat{R}}^i_j$ for $i=1,2$ and $0\le j\le k_i$. Considering Lemma~\ref{lem:nono}, in cases (1) and (2) it is straightforward to verify that, for $i=1,2$, the collection of vectors $L^i_j$ for $0\le j\le k_i$ satisfy the hypotheses of Lemma~\ref{lem:conecyc}. On the other hand, in case (3), the collections of vectors satisfy the hypotheses of Lemma~\ref{lem:conecyc2}. Since $k_1$ and $k_2$ are even by assumption, in cases (1) and (2) (resp., case (3)), we apply Lemma~\ref{lem:conecyc} (resp., Lemma~\ref{lem:conecyc2}) to find that there exist constants $c^i_j\in \{-1,1\}$ for $i=1,2$ and $0\le j\le k_1$ such that $$\sum_{j=0}^{k_i}c^i_jL^i_j=x_{1,N}+2x_{2,N}\quad\left(\text{resp.,}~\sum_{j=0}^{k_i}c^i_jL^i_j=-x_{1,N}+2x_{2,N}\right).$$ In all cases, we have that $$\sum_{j=0}^{k_1}c^1_jL^1_j-\sum_{j=0}^{k_2}c^2_jL^2_j=0,$$ which implies that $\varphi\left(\widehat{C}(\mathfrak{g},\mathscr{B}(\mathfrak{g}))\right)$ does not have full rank, contradicting that $\varphi$ is a contact form. Therefore, $\mathfrak{g}_C(\mathcal{P})$ is not contact.
\end{proof}

\begin{proposition}\label{prop:noocycle3}
Let $\mathcal{P}$ be a connected, type-C poset of height one. If $RG(\mathcal{P})$ contains two disjoint odd cycles, then $\mathfrak{g}_C(\mathcal{P})$ is not contact.
\end{proposition}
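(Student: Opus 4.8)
The plan is to follow the determinant criterion of Theorem~\ref{thm:det}, exactly as in Propositions~\ref{prop:noselfloops} and~\ref{prop:noocycle2}: assuming $\mathfrak{g}=\mathfrak{g}_C(\mathcal{P})$ is contact with contact form $\varphi$, I will exhibit a nontrivial linear relation among the rows of $\varphi(\widehat{C}(\mathfrak{g},\mathscr{B}_C(\mathcal{P})))$, contradicting $\det\varphi(\widehat{C})\neq 0$. Set $N=|V(\mathcal{P})|+|E(\mathcal{P})|+1$, write $u=x_{1,N}$, and let $\mathcal{C}_1,\mathcal{C}_2$ be the two disjoint odd cycles. Since $\mathcal{P}$ is connected, $RG(\mathcal{P})$ is connected, so I may fix a shortest path $P\colon p=r_0,r_1,\dots,r_m=q$ between $V(\mathcal{C}_1)$ and $V(\mathcal{C}_2)$; minimality guarantees $p\in\mathcal{C}_1$, $q\in\mathcal{C}_2$, and that the edges of $P$ are disjoint from those of $\mathcal{C}_1$ and $\mathcal{C}_2$, so the three edge-sets contribute pairwise distinct rows.

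The key preliminary observation, which I would extract from Lemma~\ref{lem:nono}, is that these edge-rows have a uniform shape. For $b\in\mathscr{B}_C(\mathcal{P})$ of the form $R^{\pm}_{i,j}$ or $R_{i,j}$, let $L(b)$ denote $\tfrac{1}{-\varphi(b)}$ times the row of $\varphi(\widehat{C})$ indexed by $b$, which is legitimate since $\varphi(b)\neq 0$ by Remark~\ref{rem:nonzero}. Declaring $\sigma_v=-1$ when every edge of $RG(\mathcal{P})$ incident to $v$ is dashed with $v$ its smaller endpoint, and $\sigma_v=+1$ otherwise, the orientation constraints of Lemma~\ref{lem:nono}(c)--(d) make $\sigma$ well defined and give, under the ordering of $\mathscr{B}_C(\mathcal{P})$ used in Proposition~\ref{prop:noselfloops}, that $L(b)=u+\sigma_ax_{a,N}+\sigma_bx_{b,N}$ for the edge $\{a,b\}$ corresponding to $b$, while a self-loop at $a$ contributes $u+2x_{a,N}$ with $\sigma_a=+1$. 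Indeed, a non-dashed edge forces both of its endpoints to carry sign $+1$, and by (d) no vertex is simultaneously the larger endpoint of one dashed edge and the smaller endpoint of another, so the sign multiplying $x_{v,N}$ depends only on $v$.

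With this in hand the reductions become immediate alternating sums. Taking the signed sum $\sum_i(-1)^i$ of the edge-rows around the odd cycle $\mathcal{C}_i$ cancels every interior vertex and, because an odd cycle makes the two occurrences of the base vertex carry equal sign, leaves $S_1=u+2\sigma_{p}x_{p,N}$ for $\mathcal{C}_1$ and $S_2=u+2\sigma_{q}x_{q,N}$ for $\mathcal{C}_2$ (for a self-loop one simply uses its single row). The same signed sum along $P$ cancels its interior vertices and leaves $T=u+\sigma_px_{p,N}+\sigma_qx_{q,N}$ when $m$ is odd and $T=\sigma_px_{p,N}-\sigma_qx_{q,N}$ when $m$ is even. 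One then checks directly that $-S_1-S_2+2T=0$ when $m$ is odd and $S_1-S_2-2T=0$ when $m$ is even; in both cases the signs $\sigma_p,\sigma_q$ factor out of each coordinate and cancel. Since $\mathcal{C}_1$, $\mathcal{C}_2$, and $P$ use disjoint rows, this is a genuine nontrivial dependence among the rows of $\varphi(\widehat{C})$, so $\det\varphi(\widehat{C})=0$, contradicting Theorem~\ref{thm:det}; hence $\mathfrak{g}_C(\mathcal{P})$ is not contact.

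The step I expect to be the main obstacle is establishing the uniform sign shape of the rows together with the well-definedness of $\sigma_v$: this is precisely where all of the orientation constraints of Lemma~\ref{lem:nono} are consumed, and it is what allows the path reduction to proceed regardless of whether the base vertices $p,q$ have sign $+1$ or $-1$ — the situation the hypotheses of Lemma~\ref{lem:pathcon} do not directly cover. One could instead bypass the observation and argue, as in Proposition~\ref{prop:noocycle2}, by a case analysis that reduces each cycle through Lemma~\ref{lem:conecyc} or Lemma~\ref{lem:conecyc2} and the path through Lemma~\ref{lem:pathcon}; the uniform formulation above merely packages that bookkeeping so that the final cancellation is visible at once.
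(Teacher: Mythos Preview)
Your argument is correct and is genuinely cleaner than the paper's. The paper proves this proposition by a five-case analysis: it distinguishes whether either cycle is a self-loop and, more significantly, it splits on the dashed/non-dashed nature and orientation of the edges of the connecting path at its endpoints $p_0^1$ and $p_0^2$. In each case it invokes one of Lemmas~\ref{lem:pathcon}, \ref{lem:conecyc}, or \ref{lem:conecyc2} separately for the two cycles and the path, and then produces an explicit vanishing combination. Your observation that Lemma~\ref{lem:nono}(a),(c),(d) forces the sign of $x_{v,N}$ in any normalized edge-row to depend only on the vertex $v$---your $\sigma_v$---is exactly the unifying statement that collapses all five cases at once; it simultaneously subsumes the dichotomy between Lemma~\ref{lem:conecyc} and Lemma~\ref{lem:conecyc2} (which differ only in whether the base vertex carries sign $+1$ or $-1$) and the endpoint case split in Lemma~\ref{lem:pathcon}. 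What the paper's approach buys is that it reuses the already-proved technical lemmas verbatim, whereas your approach requires re-deriving the alternating-sum cancellations from scratch---but those are one-line telescoping computations once $\sigma_v$ is in hand, so this is a good trade. The only point worth making explicit in a written-up version is the verification (which you correctly flag as the main step) that $\sigma_v$ is well defined: if $v$ is the smaller endpoint of one incident dashed edge, then by Lemma~\ref{lem:nono}(a),(c),(d) every incident edge is dashed with $v$ smaller, so the sign at $v$ is indeed intrinsic to $v$.
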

\begin{proof}
Assume that $\mathfrak{g}=\mathfrak{g}_C(\mathcal{P})$ is contact and fix a contact form $\varphi\in\mathfrak{g}^*$. Set $N=|V(\mathcal{P})|+|E(\mathcal{P})|+1$. Let $\mathcal{C}_1$ and $\mathcal{C}_2$ denote the two odd cycles. Since $RG(\mathcal{P})$ is connected, there exists a path, say $T$, that connects a vertex $p_0^1$ of $\mathcal{C}_1$ to a vertex $p_0^2$ of $\mathcal{C}_2$ and contains no other vertices of either cycle. For $i=1,2$, if $\mathcal{C}_i$ is not a self-loop, then assume that $\mathcal{C}_i$ is defined by the sequence of vertices $p_0^i,p_1^i,\hdots,p_{k_i}^i,p_0^i$. Moreover, we assume that $T$ is defined by the sequence of vertices $p_0^1=p_0^3,p_1^3,\hdots,p_{k_3}^3=p_0^2$. For $i=1,2$ and $0\le j\le k_i$, we let $R^i_j$ (resp., $\mathbf{\widehat{R}}^i_j$) denote the element of $\mathscr{B}_C(\mathcal{P})$ (resp., row of $\varphi(\widehat{C}(\mathfrak{g},\mathscr{B}_C(\mathcal{P})))$) corresponding to $\{p^i_j,p^i_{j+1}\}$ when $0\le j<k_i$ and $\{p,p^i_k\}$ when $j=k_i$, i.e.,
$$R^i_j=\begin{cases}
    R_{p^i_j,p^i_{j+1}}, & 0\le j<k_i,~\{p^i_j,p^i_{j+1}\}~\text{is dashed, and}~p^i_{j}<p^i_{j+1} \\
    R_{p^i_{j+1},p^i_{j}}, & 0\le j<k_i,~\{p^i_j,p^i_{j+1}\}~\text{is dashed, and}~p^i_{j}>p^i_{j+1} \\
    R^{\pm}_{p^i_j,p^i_{j+1}}, & 0\le j<k_i~\text{and}~\{p^i_j,p^i_{j+1}\}~\text{is non-dashed} \\
    R_{p^i_0,p^i_{k_i}}, & j=k_i,~\{p^i_0,p^i_{k_i}\}~\text{is dashed, and}~p^i_0<p^i_{k_i} \\
    R_{p^i_{k_i},p^i_0}, & j=k_i,~\{p^i_0,p^i_{k_i}\}~\text{is dashed, and}~p^i_0>p^i_{k_i} \\
    R^{\pm}_{p^i_0,p^i_{k_i}}, & j=k_i~\text{and}~\{p^i_0,p^i_{k_i}\}~\text{is non-dashed}
\end{cases}$$ and similarly for $\mathbf{\widehat{R}}^i_j$. For $0\le j< k_3$, we let $R^3_j$ (resp., $\mathbf{\widehat{R}}^3_j$) denote the element of $\mathscr{B}_C(\mathcal{P})$ (resp., row of $\varphi(\widehat{C}(\mathfrak{g},\mathscr{B}_C(\mathcal{P})))$) corresponding to $\{p^3_j,p^3_{j+1}\}$, i.e., $$R^3_j=\begin{cases}
    R_{p^3_j,p^3_{j+1}}, & 0\le j<k_3,~\{p^3_j,p^3_{j+1}\}~\text{is dashed, and}~p^3_{j}<p^3_{j+1} \\
    R_{p^3_{j+1},p^3_{j}}, & 0\le j<k_3,~\{p^3_j,p^3_{j+1}\}~\text{is dashed, and}~p^3_{j}>p^i_{j+1} \\
    R^{\pm}_{p^3_j,p^3_{j+1}}, & 0\le j<k_3~\text{and}~\{p^3_j,p^3_{j+1}\}~\text{is non-dashed}
\end{cases}$$ and similarly for $\mathbf{\widehat{R}}^3_j$. Order the elements of $\mathscr{B}_{C}(\mathcal{P})$ so that
\begin{itemize}
    \item $D_{p^1_j}$, for $0\le j\le k_1$, occur first listed in increasing order of $j$ followed by
    \item $D_{p^3_j}$, for $1\le j\le k_3$, in increasing order of $j$ followed by
    \item $D_{p^2_j}$, for $1\le j\le k_2$, in increasing order of $j$ followed by
    \item $D_{p}$, for $p\in\mathcal{P}^+\backslash\{p^1_0,\hdots,p^1_{k_1},p^2_0,\hdots,p^2_{k_2},p^3_0,\hdots,p^3_{k_3}\}$, in increasing order of $p$ in $\mathbb{Z}$ followed by
    \item $R^{\pm}_{i,j}$, for $i<j$ such that $-i\prec j$ and $-j\prec i$, in increasing lexicographic order of $(i,j)$ followed by
    \item $R_{i,j}$, for $i<j$ such that $-j\prec -i$ and $i\prec j$, in increasing lexicographic order of $(i,j)$. 
\end{itemize}
With this ordering, if $\mathcal{C}_1$ is not a self-loop, then $$\mathbf{\widehat{R}}^1_j=\begin{cases}
    -\varphi(R^1_j)(x_{1,N}-x_{j+2,N}+x_{j+3,N}), & 0\le j<k_1,~\{p^1_j,p^1_{j+1}\}~\text{is dashed, and}~p^1_{j}<p^1_{j+1} \\
    -\varphi(R^1_j)(x_{1,N}+x_{j+2,N}-x_{j+3,N}), & 0\le j<k_1,~\{p^1_j,p^1_{j+1}\}~\text{is dashed, and}~p^1_{j}>p^1_{j+1} \\
    -\varphi(R^1_j)(x_{1,N}+x_{j+2,N}+x_{j+3,N}), & 0\le j<k_1~\text{and}~\{p^1_j,p^1_{j+1}\}~\text{is nondashed} \\
    -\varphi(R^1_j)(x_{1,N}-x_{2,N}+x_{k_1+2,N}), & j=k_1,~\{p^1_0,p^1_{k_1}\}~\text{is dashed, and}~p^1_0<p_{k_1}^1 \\
    -\varphi(R^1_j)(x_{1,N}+x_{2,N}-x_{k_1+2,N}), & j=k_1,~\{p^1_0,p^1_{k_1}\}~\text{is dashed, and}~p^1_0>p_{k_1}^1 \\
    -\varphi(R^1_j)(x_{1,N}+x_{2,N}+x_{k_1+2,N}), & j=k_1~\text{and}~\{p^1_0,p^1_{k_1}\}~\text{is nondashed}
\end{cases}$$
for $0\le j\le k_1$; on the other hand, if $\mathcal{C}_1$ is a self-loop, then $$\mathbf{\widehat{E}_{-p_0^1,p_0^1}(\mathcal{P})}=-\varphi(E_{-p_0^1,p_0^1})(x_{1,N}+2x_{2,N}).$$ Similarly, if $\mathcal{C}_2$ is not a self-loop, then $$\mathbf{\widehat{R}}^2_j=\begin{cases}
    -\varphi(R^2_j)(x_{1,N}-x_{j+k_1+k_3+2,N}+x_{j+k_1+k_3+3,N}), & 0\le j<k_2,~\{p^2_j,p^2_{j+1}\}~\text{is dashed, and}~p^2_{j}<p^2_{j+1} \\
    -\varphi(R^2_j)(x_{1,N}+x_{j+k_1+k_3+2,N}-x_{j+k_1+k_3+3,N}), & 0\le j<k_2,~\{p^2_j,p^2_{j+1}\}~\text{is dashed, and}~p^2_{j}>p^2_{j+1} \\
    -\varphi(R^2_j)(x_{1,N}+x_{j+k_1+k_3+2,N}+x_{j+k_1+k_3+3,N}), & 0\le j<k_2~\text{and}~\{p^2_j,p^2_{j+1}\}~\text{is non-dashed} \\
    -\varphi(R^2_j)(x_{1,N}-x_{k_1+k_3+2,N}+x_{k_1+k_2+k_3+2,N}), & j=k_2,~\{p^2_0,p^2_{k_2}\}~\text{is dashed, and}~p^2_0<p_{k_2}^2 \\
    -\varphi(R^2_j)(x_{1,N}+x_{k_1+k_3+2,N}-x_{k_1+k_2+k_3+2,N}), & j=k_2,~\{p^2_0,p^2_{k_2}\}~\text{is dashed, and}~p^2_0>p_{k_2}^2 \\
    -\varphi(R^2_j)(x_{1,N}+x_{k_1+k_3+2,N}+x_{k_1+k_2+k_3+2,N}), & j=k_2~\text{and}~\{p^2_0,p^2_{k_2}\}~\text{is non-dashed}
\end{cases}$$ for $0\le j\le k_2$; on the other hand, if $\mathcal{C}_2$ is a self-loop, then $$\mathbf{\widehat{E}_{-p_0^2,p_0^2}(\mathcal{P})}=-\varphi(E_{-p_0^2,p_0^2})(x_{1,N}+2x_{k_1+k_3+2,N}).$$ Finally, with the above ordering of $\mathscr{B}_{C}(\mathcal{P})$, we have that $$\mathbf{\widehat{R}}^3_j=\begin{cases}
    -\varphi(R^3_j)(x_{1,N}-x_{2,N}+x_{k_1+3,N}), & j=0,~\{p_0^3,p^3_1\}~\text{is dashed, and}~p_0^3<p^3_1 \\
    -\varphi(R^3_j)(x_{1,N}+x_{2,N}-x_{k_1+3,N}), & j=0,~\{p_0^3,p^3_1\}~\text{is dashed, and}~p_0^3>p^3_1 \\
    -\varphi(R^3_j)(x_{1,N}+x_{2,N}+x_{k_1+3,N}), & j=0~\text{and}~\{p_0^3,p^3_{1}\}~\text{is non-dashed} \\
    -\varphi(R^3_j)(x_{1,N}-x_{j+k_1+2,N}+x_{j+k_1+3,N}), & 0< j<k_3,~\{p^3_j,p^3_{j+1}\}~\text{is dashed, and}~p^3_{j}<p^3_{j+1} \\
    -\varphi(R^3_j)(x_{1,N}+x_{j+k_1+2,N}-x_{j+k_1+3,N}), & 0< j<k_3,~\{p^3_j,p^3_{j+1}\}~\text{is dashed, and}~p^3_{j}>p^3_{j+1} \\
    -\varphi(R^3_j)(x_{1,N}+x_{j+k_1+2,N}+x_{j+k_1+3,N}), & 0< j<k_3~\text{and}~\{p^3_j,p^3_{j+1}\}~\text{is non-dashed} \\
\end{cases}$$ for $0\le j< k_3$. Note that Proposition~\ref{prop:noselfloops} covers the case where both $\mathcal{C}_1$ and $\mathcal{C}_2$ are self-loops. Considering Lemma~\ref{lem:nono}, there are five cases.
\bigskip

\noindent
\textbf{Case 1:} $\mathcal{C}_1$ or $\mathcal{C}_2$ is a self-loop and
\begin{itemize}
    \item[(1)] if $\mathcal{C}_1$ is a self-loop, then $\{p_{k_3-1}^3,p_k^3=p_0^2\}$ is non-dashed or $\{p_{k_3-1}^3,p_0^2\}$ is dashed and $p_{k_3-1}^3<p_0^2$, or
    \item[(2)] if $\mathcal{C}_2$ is a self-loop, then $\{p_0^3=p_0^1,p_1^3\}$ is non-dashed or $\{p_0^1,p_1^3\}$ is dashed and $p_0^1>p_1^3$.
\end{itemize}
Without loss of generality, assume that $\mathcal{C}_1$ is not a self-loop and $\mathcal{C}_2$ is a self-loop. Since $\varphi$ is a contact form, considering Remark~\ref{rem:nonzero}, we have that $\varphi(R^1_j)\neq 0$ for $0\le j\le k_1$, $\varphi(E_{-p_0^2,p_0^2})\neq 0$, and $\varphi(R^3_j)\neq 0$ for $0\le j< k_3$. Thus, we can define the collection of vectors $L^1_j=\frac{1}{-\varphi(R^1_j)}\mathbf{\widehat{R}}^1_j$ for $0\le j\le k_1$ and $L^3_j=\frac{1}{-\varphi(R^3_j)}\mathbf{\widehat{R}}^3_j$ for $0\le j< k_3$. Considering Lemma~\ref{lem:nono}, it is straightforward to verify that the collection of vectors $L_j^1$ for $0\le j\le k_1$ satisfies the hypotheses of Lemma~\ref{lem:conecyc}, and the collection of vectors $L_j^3$ for $0\le j< k_3$ satisfies the hypotheses of Lemma~\ref{lem:pathcon}. Since $k_1$ is even by assumption, applying Lemma~\ref{lem:conecyc} we find that there exist constants $c^1_j\in \{-1,1\}$ for $0\le j\le k_1$ such that $$\sum_{j=0}^{k_1}c^1_jL^1_j=x_{1,N}+2x_{2,N}.$$ If $k_3$ is odd, then applying Lemma~\ref{lem:pathcon} we find that there exist constants $c^3_j\in \{-1,1\}$ for $0\le j< k_3$ such that $$\sum_{j=0}^{k_3-1}c^3_jL^3_j=x_{1,N}+x_{2,N}+x_{k_1+k_3+2,N},$$ which implies that $$\frac{1}{2\varphi(E_{-p_0^2,p_0^2})}\mathbf{\widehat{E}_{-p_0^2,p_0^2}(\mathcal{P})}-\frac{1}{2}\sum_{j=0}^{k_1}c^1_jL^1_j+\sum_{j=0}^{k_3-1}c^3_jL^3_j=0.$$ On the other hand, if $k_3$ is even, then applying Lemma~\ref{lem:pathcon} we find that there exist constants $c^3_j\in \{-1,1\}$ for $0\le j< k_3$ such that $$\sum_{j=0}^{k_3-1}c^3_jL^3_j=x_{2,N}-x_{k_1+k_3+2,N},$$ which implies that $$\frac{1}{2\varphi(E_{-p_0^2,p_0^2})}\mathbf{\widehat{E}_{-p_0^2,p_0^2}(\mathcal{P})}+\frac{1}{2}\sum_{j=0}^{k_1}c^1_jL^1_j-\sum_{j=0}^{k_3-1}c^3_jL^3_j=0.$$ In either case, it follows that $\varphi\left(\widehat{C}(\mathfrak{g},\mathscr{B}(\mathfrak{g}))\right)$ does not have full rank, contradicting that $\varphi$ is a contact form. Therefore, $\mathfrak{g}_C(\mathcal{P})$ is not contact.
\bigskip

\noindent
\textbf{Case 2:} $\mathcal{C}_1$ or $\mathcal{C}_2$ is a self-loop and
\begin{itemize}
    \item[(1)] if $\mathcal{C}_1$ is a self-loop, then $\{p_{k_3-1}^3,p_{k_3}^3=p_0^2\}$ is dashed and $p_{k_3-1}^3>p_0^2$ or
    \item[(2)] if $\mathcal{C}_2$ is a self-loop, then $\{p_0^3=p_0^1,p_1^3\}$ is dashed and $p_0^1<p_1^3$.
\end{itemize}
Without loss of generality, assume that $\mathcal{C}_1$ is not a self-loop and $\mathcal{C}_2$ is a self-loop. As in Case 1, \\$\varphi(E_{-p_0^2,p_0^2})\neq0$ and we can define the collection of vectors $L^1_j=\frac{1}{-\varphi(R^1_j)}\mathbf{\widehat{R}}^1_j$ for $0\le j\le k_1$ and $L^3_j=\frac{1}{-\varphi(R^3_j)}\mathbf{\widehat{R}}^3_j$ for $0\le j< k_3$. Considering Lemma~\ref{lem:nono}, it is straightforward to verify that the collection of vectors $L_j^1$ for $0\le j\le k_1$ satisfies the hypotheses of Lemma~\ref{lem:conecyc2} and, if $k_3>1$, the collection of vectors $L_j^3$ for $1\le j< k_3$ satisfies the hypotheses of Lemma~\ref{lem:pathcon}. Since $k_1$ is even by assumption, applying Lemma~\ref{lem:conecyc2} we find that there exist constants $c^1_j\in \{-1,1\}$ for $0\le j\le k_1$ such that $$\sum_{j=0}^{k_1}c^1_jL^1_j=-x_{1,N}+2x_{2,N}.$$ If $k_3=1$, then we have that $$\frac{1}{2\varphi(E_{-p_0^2,p_0^2})}\mathbf{\widehat{E}_{-p_0^2,p_0^2}(\mathcal{P})}+\frac{1}{2}\sum_{j=0}^{k_1}c^1_jL^1_j+L_0^3=0.$$ If $k_3>1$ is even, then applying Lemma~\ref{lem:pathcon} we find that there exist constants $c^3_j\in \{-1,1\}$ for $1\le j< k_3$ such that $$\sum_{j=1}^{k_3-1}c^3_jL^3_j=x_{1,N}+x_{k_1+3,N}+x_{k_1+k_3+2,N},$$ which implies that $$2L_0^3+\sum_{j=0}^{k_1}c^1_jL^1_j-\frac{1}{\varphi(E_{-p_0^2,p_0^2})}\mathbf{\widehat{E}_{-p_0^2,p_0^2}(\mathcal{P})}-2\sum_{j=1}^{k_3-1}c^3_jL^3_j=0.$$ Finally, if $k_3>1$ is odd, then applying Lemma~\ref{lem:pathcon} we find that there exist constants $c^3_j\in \{-1,1\}$ for $1\le j< k_3$ such that $$\sum_{j=1}^{k_3-1}c^3_jL^3_j=x_{k_1+3,N}-x_{k_1+k_3+2,N},$$ which implies that $$2L_0^3+\sum_{j=0}^{k_1}c^1_jL^1_j+\frac{1}{\varphi(E_{-p_0^2,p_0^2})}\mathbf{\widehat{E}_{-p_0^2,p_0^2}(\mathcal{P})}-2\sum_{j=1}^{k_3-1}c^3_jL^3_j=0.$$ In all cases, it follows that $\varphi\left(\widehat{C}(\mathfrak{g},\mathscr{B}(\mathfrak{g}))\right)$ does not have full rank, contradicting that $\varphi$ is a contact form. Therefore, $\mathfrak{g}_C(\mathcal{P})$ is not contact.
\bigskip

\noindent
\textbf{Case 3:} Neither $\mathcal{C}_1$ nor $\mathcal{C}_2$ is a self-loop and one of the following holds:
\begin{itemize}
    \item[(1)] $\{p_0^3=p_0^1,p_1^3\}$ and $\{p_{k_3-1}^3,p_{k_3}^3=p_0^2\}$ are both non-dashed;
    \item[(2)] $\{p_0^3=p_0^1,p_1^3\}$ is non-dashed and $\{p_{k_3-1}^3,p_{k_3}^3=p_0^2\}$ is dashed with $p_{k_3-1}^3<p_0^2$;
    \item[(3)] $\{p_0^3=p_0^1,p_1^3\}$ is dashed with $p_0^1>p_1^3$ and $\{p_{k_3-1}^3,p_{k_3}^3=p_0^2\}$ is non-dashed; or
    \item[(4)] $\{p_0^3=p_0^1,p_1^3\}$ and $\{p_{k_3-1}^3,p_{k_3}^3=p_0^2\}$ are both dashed with $p_0^1>p_1^3$ and $p_{k_3-1}^3<p_0^2$.
\end{itemize}
Since $\varphi$ is a contact form, considering Remark~\ref{rem:nonzero}, we have that $\varphi(R^1_j)\neq 0$ for $0\le j\le k_1$, $\varphi(R^2_j)\neq 0$ for $0\le j\le k_2$, and $\varphi(R^3_j)\neq 0$ for $0\le j< k_3$. Thus, we can define the collection of vectors $L^1_j=\frac{1}{-\varphi(R^1_j)}\mathbf{\widehat{R}}^1_j$ for $0\le j\le k_1$, $L^2_j=\frac{1}{-\varphi(R^2_j)}\mathbf{\widehat{R}}^2_j$ for $0\le j\le k_2$, and $L^3_j=\frac{1}{-\varphi(R^3_j)}\mathbf{\widehat{R}}^3_j$ for $0\le j< k_3$. Considering Lemma~\ref{lem:nono}, it is straightforward to verify that, for $i=1,2$, the collection of vectors $L_j^i$ for $0\le j\le k_i$ satisfies the hypotheses of Lemma~\ref{lem:conecyc}. Moreover, the collection of vectors $L_j^3$ for $0\le j< k_3$ satisfies the hypotheses of Lemma~\ref{lem:pathcon}. Since $k_1$ and $k_2$ are even, applying Lemma~\ref{lem:conecyc} we find that there exist constants $c_j^1\in\{-1,1\}$ for $0\le j\le k_1$ and $c_j^2\in\{-1,1\}$ for $0\le j\le k_2$ such that $$\sum_{j=0}^{k_1}c_j^1L_j^1=x_{1,N}+2x_{2,N}$$ and $$\sum_{j=0}^{k_2}c_j^2L_j^2=x_{1,N}+2x_{k_1+k_3+2,N}.$$ If $k_3$ is odd, then applying Lemma~\ref{lem:pathcon} we find that there exist constants $c_j^3\in\{-1,1\}$ for $0\le j<k_3$ such that $$\sum_{j=0}^{k_3-1}c_j^3L_j^3=x_{1,N}+x_{2,N}+x_{k_1+k_{3}+2,N}$$ so that $$\sum_{j=0}^{k_1}c_j^1L_j^1+\sum_{j=0}^{k_2}c_j^2L_j^2-2\sum_{j=0}^{k_3-1}c_j^3L_j^3=0.$$ On the other hand, if $k_3$ is even, then applying Lemma~\ref{lem:pathcon} we find that there exist constants $c_j^3\in\{-1,1\}$ for $0\le j<k_3$ such that $$\sum_{j=0}^{k_3-1}c_j^3L_j^3=x_{2,N}-x_{k_1+k_{3}+2,N}$$ so that $$\sum_{j=0}^{k_1}c_j^1L_j^1-\sum_{j=0}^{k_2}c_j^2L_j^2-2\sum_{j=0}^{k_3-1}c_j^3L_j^3=0.$$ In either case, it follows that $\varphi\left(\widehat{C}(\mathfrak{g},\mathscr{B}(\mathfrak{g}))\right)$ does not have full rank, contradicting that $\varphi$ is a contact form. Therefore, $\mathfrak{g}_C(\mathcal{P})$ is not contact.
\bigskip

\noindent
\textbf{Case 4:} Neither $\mathcal{C}_1$ nor $\mathcal{C}_2$ is a self-loop and one of the following holds:
\begin{itemize}
    \item[(1)] $k_3=1$ and $\{p_0^3=p_0^1,p_1^3=p_0^2\}$ is dashed with $p_0^1>p_0^2$;
    \item[(2)] $\{p_0^3=p_0^1,p_1^3\}$ is non-dashed and $\{p_{k_3-1}^3,p_{k_3}^3=p_0^2\}$ is dashed with $p_{k_3-1}^3>p_0^2$;
    \item[(3)] $\{p_0^3=p_0^1,p_1^3\}$ is dashed with $p_0^1>p_1^3$ and $\{p_{k_3-1}^3,p_{k_3}^3=p_0^2\}$ is dashed with $p_{k_3-1}^3>p_0^2$; 
    \item[(4)] $k_3=1$ and $\{p_0^3=p_0^1,p_1^3=p_0^2\}$ is dashed with $p_0^1<p_1^3$;
    \item[(5)] $\{p_0^3=p_0^1,p_1^3\}$ is dashed with $p_0^1<p_1^3$ and $\{p_{k_3-1}^3,p_{k_3}^3=p_0^2\}$ is non-dashed;
    \item[(6)] $\{p_0^3=p_0^1,p_1^3\}$ is dashed with $p_0^1<p_1^3$ and $\{p_{k_3-1}^3,p_{k_3}^3=p_0^2\}$ is dashed with $p_{k_3-1}^3<p_0^2$.
\end{itemize}
Without loss of generality, assume that we are either in subcase (1), (2), or (3); the other subcases follow via a similar argument. Since $\varphi$ is a contact form, considering Remark~\ref{rem:nonzero} we have that $\varphi(R^1_j)\neq 0$ for $0\le j\le k_1$, $\varphi(R^2_j)\neq 0$ for $0\le j\le k_2$, and $\varphi(R^3_j)\neq 0$ for $0\le j< k_3$. Thus, we can define the collection of vectors $L^1_j=\frac{1}{-\varphi(R^1_j)}\mathbf{\widehat{R}}^1_j$ for $0\le j\le k_1$, $L^2_j=\frac{1}{-\varphi(R^2_j)}\mathbf{\widehat{R}}^2_j$ for $0\le j\le k_2$, and $L^3_j=\frac{1}{-\varphi(R^3_j)}\mathbf{\widehat{R}}^3_j$ for $0\le j< k_3$. Considering Lemma~\ref{lem:nono}, it is straightforward to verify that the collection of vectors $L_j^1$ for $0\le j\le k_1$ satisfies the hypotheses of Lemma~\ref{lem:conecyc}, the collection of vectors $L_j^2$ for $0\le j\le k_2$ satisfies the hypotheses of Lemma~\ref{lem:conecyc2}. Moreover, if $k_3>1$, then the collection of vectors $L_j^3$ for $0\le j< k_3$ satisfies the hypotheses of Lemma~\ref{lem:pathcon}. Since $k_1$ and $k_2$ are even, applying Lemmas~\ref{lem:conecyc} and~\ref{lem:conecyc2} we find that there exist constants $c_j^1\in\{-1,1\}$ for $0\le j\le k_1$ and $c_j^2\in\{-1,1\}$ for $0\le j\le k_2$ such that $$\sum_{j=0}^{k_1}c_j^1L_j^1=x_{1,N}+2x_{2,N}$$ and $$\sum_{j=0}^{k_2}c_j^2L_j^2=-x_{1,N}+2x_{k_1+k_3+2,N}.$$ If $k_3=1$, then we have that $$\sum_{j=0}^{k_1}c_j^1L_j^1-\sum_{j=0}^{k_2}c_j^2L_j^2-2L_0^3=0.$$ If $k_3>1$ odd, then applying Lemma~\ref{lem:pathcon} we find that there exist constants $c_j^3\in\{-1,1\}$ for $0\le j<k_3-1$ such that $$\sum_{j=0}^{k_3-2}c_j^3L_j^3=x_{2,N}-x_{k_1+k_3+1,N},$$ which implies that $$\sum_{j=0}^{k_1}c_j^1L_j^1-\sum_{j=0}^{k_2}c_j^2L_j^2-2\sum_{j=0}^{k_3-2}c_j^3L_j^3-2L_{k_3-1}^3=0.$$ If $k_3>1$ even, then applying Lemma~\ref{lem:pathcon} we find that there exist constants $c_j^3\in\{-1,1\}$ for $0\le j<k_3-1$ such that $$\sum_{j=0}^{k_3-2}c_j^3L_j^3=x_{1,N}+x_{2,N}+x_{k_1+k_3+1,N}$$ so that $$\sum_{j=0}^{k_1}c_j^1L_j^1+\sum_{j=0}^{k_2}c_j^2L_j^2-2\sum_{j=0}^{k_3-2}c_j^3L_j^3+2L_{k_3-1}^3=0.$$ In all cases, it follows that $\varphi\left(\widehat{C}(\mathfrak{g},\mathscr{B}(\mathfrak{g}))\right)$ does not have full rank, contradicting that $\varphi$ is a contact form. Therefore, $\mathfrak{g}_C(\mathcal{P})$ is not contact.
\bigskip

\noindent
\textbf{Case 5:} Neither $\mathcal{C}_1$ nor $\mathcal{C}_2$ is a self-loop, $\{p_0^3=p_0^1,p_1^3\}$ is dashed with $p_0^1<p_1^3$, and $\{p_{k_3-1}^3,p_{k_3}^3=p_0^2\}$ is dashed with $p_{k_3-1}^3>p_0^2$. Since $\varphi$ is a contact form, considering Remark~\ref{rem:nonzero} we have that $\varphi(R^1_j)\neq 0$ for $0\le j\le k_1$, $\varphi(R^2_j)\neq 0$ for $0\le j\le k_2$, and $\varphi(R^3_j)\neq 0$ for $0\le j< k_3$. Thus, we can define the collection of vectors $L^1_j=\frac{1}{-\varphi(R^1_j)}\mathbf{\widehat{R}}^1_j$ for $0\le j\le k_1$, $L^2_j=\frac{1}{-\varphi(R^2_j)}\mathbf{\widehat{R}}^2_j$ for $0\le j\le k_2$, and $L^3_j=\frac{1}{-\varphi(R^3_j)}\mathbf{\widehat{R}}^3_j$ for $0\le j< k_3$. Considering Lemma~\ref{lem:nono}, it is straightforward to verify that, for $i=1,2$, the collection of vectors $L_j^i$ for $0\le j\le k_i$ satisfies the hypotheses of Lemma~\ref{lem:conecyc2}. Moreover, if $k_3>2$, then the collection of vectors $L_j^3$ for $1\le j< k_3-1$ satisfies the hypotheses of Lemma~\ref{lem:pathcon}. Since $k_1$ and $k_2$ are even, applying Lemma~\ref{lem:conecyc} we find that there exist constants $c_j^1\in\{-1,1\}$ for $0\le j\le k_1$ and $c_j^2\in\{-1,1\}$ for $0\le j\le k_2$ such that $$\sum_{j=0}^{k_1}c_j^1L_j^1=-x_{1,N}+2x_{2,N}$$ and $$\sum_{j=0}^{k_2}c_j^2L_j^2=-x_{1,N}+2x_{k_1+k_3+2,N}.$$ If $k_3=2$, then we have that $$\sum_{j=0}^{k_1}c_j^1L_j^1+2L_0^2-\sum_{j=0}^{k_2}c_j^2L_j^2-2L_1^2=0.$$ If $k_3>2$ is odd, then applying Lemma~\ref{lem:pathcon} we find that there exist constants $c_j^3\in\{-1,1\}$ for $0<j<k_3-1$ such that $$\sum_{j=1}^{k_3-2}c_j^2L_j^2=x_{1,N}+x_{k_1+3,N}+x_{k_1+k_3+1,N},$$ which implies that $$\sum_{j=0}^{k_1}c_j^1L_j^1+2L_0^3+\sum_{j=0}^{k_2}c_j^2L_j^2+2L_{k_3-1}^3-2\sum_{j=1}^{k_3-2}c_j^2L_j^2=0.$$ Finally, if $k_3>2$ is even, then applying Lemma~\ref{lem:pathcon} we find that there exist constants $c_j^3\in\{-1,1\}$ for $0<j<k_3-1$ such that $$\sum_{j=1}^{k_3-2}c_j^2L_j^2=x_{k_1+3,N}-x_{k_1+k_3+1,N}$$ so that $$\sum_{j=0}^{k_1}c_j^1L_j^1+2L_0^3-\sum_{j=0}^{k_2}c_j^2L_j^2-2L_{k_3-1}^3-2\sum_{j=1}^{k_3-2}c_j^2L_j^2=0.$$ In all cases, it follows that $\varphi\left(\widehat{C}(\mathfrak{g},\mathscr{B}(\mathfrak{g}))\right)$ does not have full rank, contradicting that $\varphi$ is a contact form. Therefore, $\mathfrak{g}_C(\mathcal{P})$ is not contact.
\end{proof}

Thus, combining Propositions~\ref{prop:nocycle1},~\ref{prop:noevencycle},~\ref{prop:noselfloops},~\ref{prop:noocycle1},~\ref{prop:noocycle2}, and~\ref{prop:noocycle3} we obtain the following.

\begin{theorem}\label{thm:cycle}
Let $\mathcal{P}$ be a type-C poset of height one. If $\mathfrak{g}_C(\mathcal{P})$ is contact, then
\begin{enumerate}
    \item[\textup{(a)}] no connected component of $RG(\mathcal{P})$ contains an even cycle, more than one odd cycle, or more than one self-loop; and
    \item[\textup{(b)}] if $RG(\mathcal{P})$ is connected, then $RG(\mathcal{P})$ contains no cycles.
\end{enumerate}
\end{theorem}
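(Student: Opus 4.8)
The plan is to read Theorem~\ref{thm:cycle} as the consolidation of Propositions~\ref{prop:nocycle1},~\ref{prop:noevencycle},~\ref{prop:noselfloops},~\ref{prop:noocycle1},~\ref{prop:noocycle2}, and~\ref{prop:noocycle3}, organized by a case analysis on the cyclic structure of $RG(\mathcal{P})$. The one genuinely new observation I would record first is that the arguments of Propositions~\ref{prop:noevencycle}--\ref{prop:noocycle3} are \emph{local}: each produces, for an arbitrary $\varphi\in\mathfrak{g}^*$, a nontrivial vanishing linear combination of rows of $\varphi(\widehat{C}(\mathfrak{g},\mathscr{B}_C(\mathcal{P})))$ that are indexed by basis elements supported on a single connected component of $RG(\mathcal{P})$, the connectedness hypothesis being used only to supply the paths along which Lemmas~\ref{lem:pathcon}--\ref{lem:conecyc2} are invoked. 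Consequently, whenever the offending feature (an even cycle, a pair of odd cycles, or a pair of self-loops) lies inside one connected component of $RG(\mathcal{P})$, the same construction exhibits $\varphi(\widehat{C}(\mathfrak{g},\mathscr{B}_C(\mathcal{P})))$ as singular for every $\varphi$, so $\mathfrak{g}_C(\mathcal{P})$ fails to be contact by Theorem~\ref{thm:det}.

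For part (b), I would assume $RG(\mathcal{P})$ is connected and $\mathfrak{g}_C(\mathcal{P})$ is contact, and argue by contradiction that $RG(\mathcal{P})$ carries a cycle. If it carries an even cycle, Proposition~\ref{prop:noevencycle} applies; otherwise every cycle, self-loops included, is odd. If two self-loops occur, Proposition~\ref{prop:noselfloops} applies; if two distinct odd cycles occur, then according to whether they share more than one vertex, exactly one vertex, or no vertex, one of Propositions~\ref{prop:noocycle1},~\ref{prop:noocycle2}, or~\ref{prop:noocycle3} applies, treating a self-loop as an odd cycle exactly as those statements already permit. The only remaining possibility is a single odd cycle or a single self-loop with no other cycle, excluded by Proposition~\ref{prop:nocycle1}. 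Each case contradicts contactness, so $RG(\mathcal{P})$ has no cycle.

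For part (a), I would pass to the possibly disconnected setting and suppose $\mathfrak{g}_C(\mathcal{P})$ is contact. If some connected component $K$ contained an even cycle, two odd cycles, or two self-loops, then this feature lies entirely within $K$, the connecting paths the relevant proof needs exist inside $K$, and by the locality observation the corresponding proposition among~\ref{prop:noevencycle}--\ref{prop:noocycle3} produces a vanishing row combination in $\varphi(\widehat{C}(\mathfrak{g},\mathscr{B}_C(\mathcal{P})))$ for every $\varphi$, contradicting contactness; hence no component contains such a feature. As corroboration I would also invoke Theorem~\ref{thm:disjoint} with the nonnegativity of the index to note that contactness forces every component of $RG(\mathcal{P})$ to have index $0$ or $1$; reading this against Theorem~\ref{thm:index1c}, an index-$0$ component is unicyclic with a single odd cycle, while an index-$1$ component is a tree or a graph with two independent cycles, and in the latter case at least one cycle is odd, forcing either an even cycle or a second odd cycle, which the arguments above already exclude.

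The main obstacle I anticipate is precisely the bridge between the connected hypotheses of Propositions~\ref{prop:noevencycle}--\ref{prop:noocycle3} and the disconnected conclusion of part (a). I must verify carefully that in each of those proofs the dimension $N=|V(\mathcal{P})|+|E(\mathcal{P})|+1$ and the constructed dependence refer to rows of the full matrix $\varphi(\widehat{C}(\mathfrak{g}_C(\mathcal{P}),\mathscr{B}_C(\mathcal{P})))$, and that connectedness enters only through the existence of the relevant paths; once confirmed, the feature living in one component is exactly the hypothesis those lemmas require. A secondary point is the uniform treatment of self-loops as odd cycles, so that the two-odd-cycle propositions genuinely cover the mixed self-loop/cycle configurations, which I would check against the explicit self-loop cases already built into Propositions~\ref{prop:noocycle2} and~\ref{prop:noocycle3}.
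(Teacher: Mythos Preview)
Your proposal is correct and matches the paper's own proof in substance. The paper's argument is terser---for (b) it simply cites Propositions~\ref{prop:nocycle1} through~\ref{prop:noocycle3} collectively, and for (a) it makes precisely your locality observation, noting that the arguments of Propositions~\ref{prop:noevencycle}--\ref{prop:noocycle3} still apply when $\mathcal{P}$ is disconnected provided the offending cycles lie in a single connected component of $RG(\mathcal{P})$. Your additional corroboration via Theorem~\ref{thm:disjoint} and Theorem~\ref{thm:index1c} is not in the paper and is not needed, but it is a sound sanity check.
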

\begin{proof}
(b) follows immediately from Propositions~\ref{prop:nocycle1},~\ref{prop:noevencycle},~\ref{prop:noselfloops},~\ref{prop:noocycle1},~\ref{prop:noocycle2}, and~\ref{prop:noocycle3}. As for (a), note that the arguments of Propositions~\ref{prop:noevencycle}--\ref{prop:noocycle3} still apply in the case where the poset $\mathcal{P}$ is disconnected and the cycles are contained in a single connected component of $RG(\mathcal{P})$.
\end{proof}

Consequently, if a connected, type-C poset $\mathcal{P}$ of height one is contact, then $RG(\mathcal{P})$ is necessarily a tree. In the following proposition, we show that this condition is also sufficient.
\bigskip

\begin{proposition}\label{prop:contacttree}
If $\mathcal{P}$ is a type-C poset of height one such that $RG(\mathcal{P})$ is a tree with $|V(\mathcal{P})|>1$, then $\mathfrak{g}_C(\mathcal{P})$ is contact. Moreover, if $p_0\in\mathcal{P}$ corresponds to a fixed vertex of degree one in $RG(\mathcal{P})$, $E_D$ denotes the collection of dashed edges of $RG(\mathcal{P})$, and $E_{\overline{D}}$ the collection of non-dashed edges, then $$\varphi=(D_{p_0})^*+\sum_{\{p,q\}\in E_{\overline{D}}(\mathcal{P})}(R^{\pm}_{p,q})^*+\sum_{\substack{\{p,q\}\in E_D(\mathcal{P})\\ p<q}}(R_{p,q})^*$$ is a contact form for $\mathfrak{g}_C(\mathcal{P})$.
\end{proposition}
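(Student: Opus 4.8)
The plan is to verify the contact condition directly through Salgado's criterion (Theorem~\ref{thm:det}). Since $RG(\mathcal{P})$ is a tree we have $|E(\mathcal{P})| = |V(\mathcal{P})| - 1$, so $\dim\mathfrak{g}_C(\mathcal{P}) = |V(\mathcal{P})| + |E(\mathcal{P})| = 2|V(\mathcal{P})| - 1$ is odd (and, as a consistency check, Theorem~\ref{thm:index1} gives $\ind\mathfrak{g}_C(\mathcal{P}) = (|V(\mathcal{P})|-1) - |V(\mathcal{P})| + 2 = 1$). By Theorem~\ref{thm:det} it therefore suffices to show that the $2|V(\mathcal{P})| \times 2|V(\mathcal{P})|$ skew-symmetric matrix $\varphi\!\left(\widehat{C}(\mathfrak{g},\mathscr{B}_C(\mathcal{P}))\right)$ is nonsingular, which I will do by showing its kernel is trivial.

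First I would record the block form of $\varphi(\widehat{C})$. Ordering $\mathscr{B}_C(\mathcal{P})$ with the $D_i$ first and the edge elements $R^{\pm}_{i,j}, R_{i,j}$ afterward (there are no $E_{-i,i}$ since a tree has no self-loop), the vanishing of composite brackets in height one — the discussion preceding Figure~\ref{fig:h01m} — makes $\varphi(C(\mathfrak{g},\mathscr{B}_C(\mathcal{P})))$ block anti-diagonal with off-diagonal blocks $\pm M(\mathcal{P})^{T}$, and since the chosen $\varphi$ takes the value $1$ on every edge element we have $M(\mathcal{P}) = M(RG(\mathcal{P}))$. Evaluating the bordering row $[\varphi]$ on this $\varphi$ yields the vector $v$ that has a single $1$ in the coordinate of $D_{p_0}$ across the $D$-block, and the all-ones vector $\mathbf{1}$ across the edge block. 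Hence
\[
\varphi(\widehat{C}) = \begin{pmatrix} 0 & v^{t} & \mathbf{1}^{t} \\ -v & 0 & -M^{T} \\ -\mathbf{1} & M & 0 \end{pmatrix}, \qquad M = M(RG(\mathcal{P})) \in \mathbb{C}^{|E(\mathcal{P})| \times |V(\mathcal{P})|}.
\]

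Next I would analyze the kernel. A right null vector $(z_0, y, u)$ satisfies (I) $y_{p_0} + \mathbf{1}^{t} u = 0$, (II) $M^{T} u = -z_0 v$, and (III) $M y = z_0 \mathbf{1}$. The crux is the structure of $\ker M$ for a tree: reading $My = 0$ edge by edge gives $y_i = -y_j$ across each non-dashed edge and $y_i = y_j$ across each dashed edge, and propagating these sign relations along the connected acyclic graph shows that $\ker M$ is one-dimensional, spanned by a vector $\xi$ whose coordinates are all $\pm$ a common nonzero value; in particular no coordinate of $\xi$ vanishes. Consequently $M^{T}$ is injective and $\operatorname{im}(M^{T})$ is exactly the annihilator of $\xi$ under the standard pairing. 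The solvability of (II) then forces $-z_0 v \in \operatorname{im}(M^{T})$, hence $z_0 \xi_{p_0} = 0$, so $z_0 = 0$; injectivity of $M^{T}$ gives $u = 0$; (III) yields $y = c\xi$; and finally (I) gives $c\xi_{p_0} = 0$, so $c = 0$ and $y = 0$. Thus the kernel is trivial, $\varphi(\widehat{C})$ is nonsingular, and Theorem~\ref{thm:det} shows $\varphi$ is a contact form, whence $\mathfrak{g}_C(\mathcal{P})$ is contact.

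The main obstacle is the clean identification of $\ker M$ with the span of a nowhere-zero vector: this is precisely where the tree hypothesis is used, since acyclicity guarantees the sign-propagation is consistent, connectedness that it is determined up to a scalar, and the absence of cycles and self-loops that no coordinate degenerates to $0$. The remaining ingredients — confirming that the diagonal blocks vanish under the height-one hypothesis and that the bordering row reduces to $v$ and $\mathbf{1}$ for this specific $\varphi$ — are routine bookkeeping. I would also note that the degree-one hypothesis on $p_0$ appears inessential to this implication, as the argument uses only $\xi_{p_0} \neq 0$, which holds at every vertex; it is retained to match the statement and to fix a canonical choice of contact form.
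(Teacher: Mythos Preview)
Your argument is correct and takes a genuinely different route from the paper's proof. The paper proves $\det\varphi(\widehat{C})=1$ by induction on $|E(\mathcal{P})|$: it chooses a leaf $p\neq p_0$, removes it and its incident edge to obtain $\mathcal{P}'$, orders the basis so that $R^{\pm}_{p,q}$ (or $R_{p,q}$) and $D_p$ are the last two elements, and expands the determinant along the last row and column to reduce to the inductive hypothesis. Your approach instead analyzes the kernel of $\varphi(\widehat{C})$ directly via the block decomposition and the structure of $\ker M$ for a tree.

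Your argument is arguably more transparent: it isolates exactly where each hypothesis is used --- connectedness and acyclicity force $\ker M$ to be one-dimensional and spanned by a nowhere-zero vector $\xi$, and the single nonzero entry of $v$ then kills both $z_0$ and the free parameter in $y$. It also explains, as you noted, why the degree-one hypothesis on $p_0$ is inessential for this direction. The paper's induction, on the other hand, yields the sharper statement $\det\varphi(\widehat{C})=1$ rather than merely $\neq 0$, and it generalizes more mechanically if one later wants to track the determinant exactly. Both arguments have roughly equal length; yours trades the bookkeeping of a cofactor expansion for a short linear-algebra lemma about signed incidence matrices of trees.
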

\begin{proof}
Throughout, for $\mathcal{P}$ a type-C poset of height one such that $RG(\mathcal{P})$ is a tree with $|V(\mathcal{P})|>1$, we denote $\varphi(\widehat{C}(\mathfrak{g}_C(\mathcal{P}),\mathscr{B}_C(\mathcal{P})))$ by $\widehat{M}_{\varphi}(\mathcal{P})$. We show that $\det~\widehat{M}_{\varphi}(\mathcal{P})=1$ by induction on $|E(\mathcal{P})|$. If $|E(\mathcal{P})|=1$, then either
\begin{itemize}
    \item $\mathcal{P}=\{-2,-1,1,2\}$ with $-2\prec 1$ and $-1\prec 2$ or
    \item  $\mathcal{P}=\{-2,-1,1,2\}$ with $-2\prec -1$ and $1\prec 2$;
\end{itemize}
in either case, the claim can be checked directly. Assume that the result holds for $|E(\mathcal{P})|=n-1\ge 1$. Let $\mathcal{P}$ be a height-one, type-C poset such that $RG(\mathcal{P})$ is a tree with $|E(\mathcal{P})|=n>1$. Set $N=|V(\mathcal{P})|+|E(\mathcal{P})|+1$. Since $|E(\mathcal{P})|>1$, there exists a vertex $p\neq p_0$ such that $p$ has degree one in $RG(\mathcal{P})$, say $p$ is adjacent to $q$ in $RG(\mathcal{P})$ via a non-dashed edge; the dashed case follows via a similar argument. Removing vertex $p$ and the edge connecting $p$ and $q$ in $RG(\mathcal{P})$ results in $RG(\mathcal{P}')$, where $\mathcal{P}'$ is the poset induced by the subset $\mathcal{P}-\{-p,p\}\subset\mathcal{P}$. Note that $|E(\mathcal{P}')|=n-1$, so that our induction hypothesis applies to $\mathcal{P}'$ with $\varphi'=\varphi-(R^{\pm}_{p,q})^*$, i.e., $\det~\widehat{M}_{\varphi'}(\mathcal{P}')=1$. Since $p\neq p_0$ is a vertex of degree one in $RG(\mathcal{P})$, it follows that one obtains $\widehat{M}_{\varphi}(\mathcal{P})$ from $\widehat{M}_{\varphi'}(\mathcal{P}')$ by adjoining two new rows and columns corresponding to the elements of $\{R^{\pm}_{p,q},D_p\}=\mathscr{B}_C(\mathcal{P})\backslash \mathscr{B}_C(\mathcal{P}')$. Ordering $\mathscr{B}_C(\mathcal{P})$ so that $R^{\pm}_{p,q}$ is second to last and $D_p$ is last, we have that the last row of $\widehat{M}_{\varphi}(\mathcal{P})$, i.e., $\mathbf{\widehat{D}_p(\mathcal{P})}$, is equal to $x_{N-1,N}$; note that this implies that last column of $\widehat{M}_{\varphi}(\mathcal{P})$ is equal to the transpose of $-x_{N-1,N}$. Thus, computing $\det~\widehat{M}_{\varphi}(\mathcal{P})$ by first expanding along the last row followed by the last column we have $$\det~\widehat{M}_{\varphi}(\mathcal{P})=(-1)^{2N-1}(-1)^{2N-2}(-1)\det~\widehat{M}_{\varphi'}(\mathcal{P}')=(-1)^{4N-4}(1)=1.$$ The result follows.
\end{proof}
\bigskip

To finish the proof of Theorem~\ref{thm:contactchar}, we first require the following lemma.

\begin{lemma}\label{lem:decompose}
If $\mathcal{P}$ is a type-C poset of height one such that $RG(\mathcal{P})$ consists of connected components $\{K_1,\hdots,K_n\}$, then $$\mathfrak{g}_C(\mathcal{P})\cong \bigoplus_{i=1}^n\mathfrak{g}_C(\mathcal{P}_{K_i}),$$ where $\mathcal{P}_{K_i}$ is the unique type-C poset satisfying $RG(\mathcal{P}_{K_i})=K_i$.
\end{lemma}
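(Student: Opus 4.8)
The plan is to exploit the fact that the basis $\mathscr{B}_C(\mathcal{P})$ carries a natural partition indexed by the connected components of $RG(\mathcal{P})$: every basis vector is attached either to a vertex (the elements $D_i$, and the self-loop elements $E_{-i,i}$) or to an edge (the elements $R_{i,j}$ and $R^{\pm}_{i,j}$) of $RG(\mathcal{P})$, and each vertex and each edge lies in exactly one component $K_i$. Writing $S_i = V(K_i)\subseteq\mathcal{P}^+$ and $\mathfrak{a}_i = \mathrm{span}\{b\in\mathscr{B}_C(\mathcal{P}) : b \text{ is attached to } K_i\}$, we obtain a vector-space decomposition $\mathfrak{g}_C(\mathcal{P})=\bigoplus_{i=1}^n\mathfrak{a}_i$. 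It then suffices to prove two things: (i) $[\mathfrak{a}_i,\mathfrak{a}_j]=0$ for $i\neq j$, so that the decomposition is a decomposition into commuting ideals; and (ii) each $\mathfrak{a}_i$ is isomorphic as a Lie algebra to $\mathfrak{g}_C(\mathcal{P}_{K_i})$.

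For (i), the key observation is that a basis vector attached to $K_i$ is, as a matrix, supported entirely on rows and columns indexed by $I_i := \{-j,j : j\in S_i\}$: indeed $D_j$ and $E_{-j,j}$ involve only $\pm j$, while $R_{j,k}$ and $R^{\pm}_{j,k}$ involve only $\pm j,\pm k$, and in each case the relevant vertices lie in $S_i$. Since the components partition the vertex set, the index sets $I_i$ are pairwise disjoint. For $A\in\mathfrak{a}_i$ and $B\in\mathfrak{a}_j$ with $i\neq j$, any entry $(AB)_{p,q}=\sum_r A_{p,r}B_{r,q}$ forces $r\in I_i\cap I_j=\emptyset$, so $AB=0$, and likewise $BA=0$; hence $[A,B]=0$. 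This establishes (i), and consequently $\mathfrak{g}_C(\mathcal{P})=\bigoplus_i\mathfrak{a}_i$ is a direct sum of the subalgebras $\mathfrak{a}_i$.

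For (ii), I would first check that the induced subposet $\mathcal{Q}_i := \mathcal{P}_{\{-j,j : j\in S_i\}}$ has relation graph exactly $K_i$. Every strict relation of a height-one type-C poset is, under the correspondence of Definition~\ref{def:RG}, recorded as a dashed edge, a non-dashed edge, or a self-loop of $RG(\mathcal{P})$ (relations among positive elements being equivalent, via condition 2 of Definition~\ref{def:BCDposet}, to the dashed relations among the corresponding negatives). Since $K_i$ is a connected component, no such relation links an index of $S_i$ to one outside $S_i$, so restricting to $I_i$ neither destroys nor creates relations. Thus $\mathcal{Q}_i$ is a type-C poset with $RG(\mathcal{Q}_i)=K_i$, and the order-preserving relabeling of $S_i$ onto $\{1,\dots,|S_i|\}$ identifies $\mathcal{Q}_i$ with the standard poset $\mathcal{P}_{K_i}$. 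This relabeling carries $\mathscr{B}_C(\mathcal{P})\cap\mathfrak{a}_i$ bijectively onto $\mathscr{B}_C(\mathcal{P}_{K_i})$, and since the bracket in $\mathfrak{a}_i$ is just the matrix commutator restricted to the block indexed by $I_i$, the induced linear isomorphism $\mathfrak{a}_i\to\mathfrak{g}_C(\mathcal{P}_{K_i})$ is a Lie algebra isomorphism.

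The genuinely substantive step is (i), the vanishing of cross-component brackets, though its proof is the short disjoint-support computation above. The step most at risk of hidden subtlety is (ii): I expect the main care to be needed in verifying that restricting to the index set $I_i$ really yields the type-C Lie poset algebra of an honest type-C poset — in particular that the symplectic pairing of $j$ with $-j$ restricts correctly, which it does precisely because $I_i$ contains $j$ if and only if it contains $-j$ — and that the relabeling is compatible with the definition of $\mathcal{P}_{K_i}$. Once these bookkeeping points are settled, combining (i) and (ii) yields $\mathfrak{g}_C(\mathcal{P})\cong\bigoplus_{i=1}^n\mathfrak{g}_C(\mathcal{P}_{K_i})$.
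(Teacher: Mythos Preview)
Your proposal is correct and follows the same approach as the paper: partition $\mathscr{B}_C(\mathcal{P})$ by the connected components of $RG(\mathcal{P})$, observe that basis elements from different components have vanishing bracket, and identify each summand with $\mathfrak{g}_C(\mathcal{P}_{K_i})$. The paper's own proof is a two-sentence sketch that asserts exactly your points (i) and (ii) without supplying the disjoint-support calculation or the relabeling bookkeeping you carefully spell out, so your write-up is simply a more detailed execution of the same argument.
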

\begin{proof}
Evidently, $\mathfrak{g}_C(\mathcal{P})\cong \bigoplus_{i=1}^n\mathfrak{g}_C(\mathcal{P}_{K_i})$ as vector spaces. Moreover, as non-trivial bracket relations can only exist between basis elements corresponding to vertices/edges in the same connected component of $RG(\mathcal{P})$, the result follows.
\end{proof}

We are now in a position to prove Theorem~\ref{thm:contactchar}.

\begin{proof}[Proof of Theorem~\ref{thm:contactchar}]
Assume that $RG(\mathcal{P})$ consists of the connected components $\{K_1,\hdots,K_n\}$ and let $\mathcal{P}_{K_i}$ denote the unique type-C poset such that $RG(\mathcal{P}_{K_i})=K_i$ for $1\le i\le n$. For the backward direction, assume that $K_1$ is the unique connected component which is a tree. Applying Proposition~\ref{prop:contacttree} and Theorem~\ref{thm:frob}, we find that $\mathcal{P}_{K_1}$ is contact and $\mathcal{P}_{K_i}$ is Frobenius for $2\le i\le n$. Now, if $\mathcal{P}$ is connected, then $\mathfrak{g}_C(\mathcal{P})=\mathfrak{g}_C(\mathcal{P}_{K_1})$, i.e., $\mathfrak{g}_C(\mathcal{P})$ is contact. On the other hand, if $\mathcal{P}$ is disconnected, then, applying Lemma~\ref{lem:decompose}, $\mathfrak{g}_C(\mathcal{P})$ is the direct sum of a contact Lie algebra with Frobenius Lie algebras, i.e., $\mathfrak{g}_C(\mathcal{P})$ is contact in this case as well.

For the forward direction, since a Lie algebra $\mathfrak{g}$ is contact only if $\ind\mathfrak{g}=1$, applying Theorem~\ref{thm:disjoint}, we find that $\ind\mathfrak{g}_C(\mathcal{P}_{K_i})=1$ for exactly one $1\le i\le n$ and $\ind\mathfrak{g}_C(\mathcal{P}_{K_j})=0$ for all other values of $1\le j\neq i\le n$. Without loss of generality, assume that $\ind\mathfrak{g}_C(\mathcal{P}_{K_1})=1$. Considering Theorems~\ref{thm:index1} and~\ref{thm:cycle} above, if $\mathfrak{g}(\mathcal{P})$ is contact, then $RG(\mathcal{P}_{K_1})$ must be a tree. Moreover, by Theorem~\ref{thm:frob} all other connected components must contain a single cycle consisting of an odd number of vertices. The result follows.
\end{proof}

\section{Directions for Further Research}\label{sec:ep}
The overall objective of this article is to continue the work initiated in \textbf{\cite{ContactLiePoset}} and to progress toward an eventual characterization of contact Lie poset algebras of classical type. For the interested reader, below we outline a few approaches that one may consider in pursuit of such a classification.

The approach that is simplest to describe, yet possibly the most cumbersome to execute, is a direct extension of the one used in this article. Specifically, it would be sufficient to extend the index formulas given in \textbf{\cite{seriesA}} and Section~\ref{sec:indf} so that they apply to type-A, B, C, and D Lie poset algebras associated with posets of arbitrary height and then apply similar arguments to those used here and in \textbf{\cite{ContactLiePoset}} to characterize those algebras that admit contact forms. With that said, extensive calculations by the authors suggest that a height-independent index formula for classical Lie poset algebras is out of reach, as the linear-algebraic techniques used here become more impractical as the corresponding posets grow in height.

An alternative approach to extending the techniques used here is to introduce a type-B, C, and D version of (contact) ``toral" posets. In \textbf{\cite{contacttoral}}, the authors extend the definition of ``toral" poset -- initially defined in \textbf{\cite{Binary}} -- to include posets corresponding to contact type-A Lie poset algebras, and they successfully construct contact forms for such Lie poset algebras. In short, contact ``toral" posets are constructed by identifying pairs of elements of ``building-block" posets together in a particular manner. The benefits to this approach lie in the generality of the definitions of (contact) ``toral-pairs" and (contact) ``toral" posets -- in particular, such definitions are height-independent -- and the combinatorial nature of the identification, or ``gluing," procedure. The drawback, however, is that it is currently unknown whether there are any contact type-A Lie poset algebras that are not ``toral." That is, while extending the notion of (contact) ``toral" poset to posets of types B, C, and D can possibly generate large families of such contact Lie poset algebras, it would be difficult to obtain a full characterization.

The least explored, yet perhaps most interesting, approach we propose here is via the Lie-algebraic concept of ``quasi-reductivity." Briefly, if $\mathfrak{g}$ is a complex Lie algebra of a connected linear algebraic group $G$ and has center $\mathfrak{z},$ then $\mathfrak{g}$ is \textit{quasi-reductive} if it admits a one-form $\varphi$ such that the center of $\faktor{\ker(d\varphi)}{\mathfrak{z}}$ consists of semisimple elements of $\mathfrak{g}$ (see \textbf{\cite{BM,duflo,PY}}). Such a one-form is said to be \textit{of reductive type}, and it can be shown that the contact form $\varphi$ given in Proposition~\ref{prop:contacttree} is of reductive type, i.e., each contact type-B, C, and D Lie poset algebra associated with a poset of height one is quasi-reductive. In fact, it is also straightforward to show that the contact forms constructed in the prequel \textbf{\cite{ContactLiePoset}} are of reductive type as well; thus, we claim that all contact Lie poset algebras of classical type associated with posets of height one are quasi-reductive. 
On the other hand, an argument similar to that presented in Theorem 5 of \textbf{\cite{swclass}} proves that each index-one, quasi-reductive Lie poset algebra is contact. We are led to the following conjecture.
\begin{conj}\label{conj:contqr}
An index-one Lie poset algebra of type A, B, C, or D is contact if and only if it is quasi-reductive.
\end{conj}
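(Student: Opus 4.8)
The plan is to treat the two implications of the biconditional separately and to isolate the single new ingredient that the arbitrary-height statement requires. The implication ``index one and quasi-reductive $\Rightarrow$ contact'' can be taken over from the argument of Theorem~5 of \textbf{\cite{swclass}} cited in the text, and since a contact Lie algebra is automatically odd-dimensional of index one, this already yields the ``if'' half of the conjecture. The substance therefore lies in the forward implication ``contact $\Rightarrow$ quasi-reductive,'' which I would set up as follows. Fix a contact form $\varphi$ on $\mathfrak{g}$; then $\mathfrak{g}^\varphi:=\ker(d\varphi)$ is one-dimensional, say $\mathfrak{g}^\varphi=\mathbb{C}v$, and by Salgado's criterion (Theorem~\ref{thm:det}) the contact condition is equivalent to $\varphi(v)\neq 0$. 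Because $\mathfrak{g}$ is the Lie algebra of a linear algebraic group, $\mathfrak{g}^\varphi$ is the Lie algebra of a coadjoint stabilizer and is hence closed under Jordan decomposition; as it is one-dimensional, its generator $v$ must be either semisimple or nilpotent.

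Next I would dispose of the center. Since $\mathfrak{z}\subseteq\mathfrak{g}^\psi$ for every $\psi\in\mathfrak{g}^*$, the index-one hypothesis forces $\dim\mathfrak{z}\leq 1$ and $\mathfrak{z}\subseteq\mathfrak{g}^\varphi=\mathbb{C}v$. If $\dim\mathfrak{z}=1$ then $\mathfrak{z}=\mathbb{C}v$, the quotient $\mathfrak{g}^\varphi/\mathfrak{z}$ is trivial, and $\varphi$ is of reductive type vacuously, so $\mathfrak{g}$ is quasi-reductive. This leaves the case $\mathfrak{z}=0$, where reductive type is equivalent to $v$ being semisimple. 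Invoking the dichotomy above, the whole forward implication thus reduces to a single claim: if $v\in\mathfrak{g}^\varphi$ is nilpotent, then $\varphi(v)=0$ (so that $\varphi(v)\neq 0$ forces $v$ semisimple, whence $\varphi$ is of reductive type). Here the defining feature $\mathfrak{h}\subseteq\mathfrak{g}\subseteq\mathfrak{b}$ enters decisively: $\mathfrak{g}$ contains the entire Cartan, so every coroot is available as an inner element. When $v$ is a single $\operatorname{ad}\mathfrak{h}$-weight vector, i.e.\ a root vector $E_\alpha$ with $\alpha\neq 0$, one simply picks $H\in\mathfrak{h}$ with $\alpha(H)\neq 0$ and reads off $0=\varphi([v,H])=-\alpha(H)\varphi(v)$, giving $\varphi(v)=0$ at once.

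The hard part will be this Key Lemma precisely when the nilpotent generator $v$ spans a line that is not $\operatorname{ad}\mathfrak{h}$-stable, i.e.\ when $v=\sum_\beta v_\beta$ genuinely mixes several weight spaces. The relations $v\in\mathfrak{g}^\varphi$ then yield only $\sum_\beta\beta(H)\varphi(v_\beta)=0$ for all $H\in\mathfrak{h}$, that is, the functional $\sum_\beta\varphi(v_\beta)\beta$ vanishes in $\mathfrak{h}^*$; this does not by itself force the \emph{unweighted} sum $\varphi(v)=\sum_\beta\varphi(v_\beta)$ to vanish. To close the gap I would try to show that the coadjoint stabilizer of a contact form on a Lie poset algebra can always be chosen $\operatorname{ad}\mathfrak{h}$-stable --- for instance by arranging $\varphi$ to be a weight covector, or by a grading and one-parameter-limit argument built from a regular element $H_0\in\mathfrak{h}$ --- which would collapse the multi-weight case to the single-weight case already handled. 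For posets of height one this is exactly what the block form $\left(\begin{smallmatrix} 0 & -M^{T}\\ M & 0\end{smallmatrix}\right)$ of $\varphi(\mathcal{C}(\mathfrak{g}_C(\mathcal{P})))$ in Figure~\ref{fig:h01m} accomplishes: it makes the toral/off-diagonal duality explicit and lets one verify the Lemma by the rank bookkeeping of Section~\ref{sec:indf}, which is how the height-one instances of the conjecture are already settled.

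The principal obstacle, then, is height-independence. As the authors note in Section~\ref{sec:ep}, the clean block structure degenerates once transitivity relations couple the diagonal and off-diagonal parts of the commutator matrix, and no index formula or explicit contact form is available beyond height one, so the multi-weight nilpotent case cannot be dispatched by direct computation in general. I expect the most promising route to a uniform argument is a structural, grading-theoretic control of $\varphi$ on nilpotent stabilizer elements, perhaps organized through the (contact) toral-pair formalism sketched in Section~\ref{sec:ep}; and should one also wish to establish the converse implication intrinsically rather than by importing the argument of \textbf{\cite{swclass}}, the centralizer reduction $\mathfrak{g}^\varphi=(\mathfrak{g}^{v})^{\varphi|_{\mathfrak{g}^{v}}}$ for semisimple $v$ keeps one within the class of Lie poset algebras of smaller rank and supports an induction on $|\mathcal{P}|$ with the height-one characterization as the base case.
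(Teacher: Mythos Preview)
The statement you are attempting to prove is Conjecture~\ref{conj:contqr}, and the paper offers \emph{no proof} of it; it is posed in Section~\ref{sec:ep} as an open problem and direction for future work. The paper records only the evidence supporting it: that the explicit contact forms constructed here and in \textbf{\cite{ContactLiePoset}} are of reductive type (so the height-one contact algebras are quasi-reductive), and that an argument parallel to Theorem~5 of \textbf{\cite{swclass}} gives the converse implication in general. There is therefore nothing to compare your proposal against on the paper's side beyond this partial evidence.

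Your outline is a reasonable program rather than a proof, and you are candid about this. The ``if'' direction (index one and quasi-reductive implies contact) you correctly flag as already available via \textbf{\cite{swclass}}, in agreement with what the paper asserts. For the ``only if'' direction your reduction is sound up to the point you reach your Key Lemma: the stabilizer $\mathfrak{g}^{\varphi}$ is one-dimensional and algebraic, hence its generator is semisimple or nilpotent; the center case is vacuous; and a single root-vector nilpotent generator is ruled out by pairing with a suitable $H\in\mathfrak{h}$. The genuine gap---which you yourself identify---is the multi-weight nilpotent case in arbitrary height, where the relation $\sum_\beta \varphi(v_\beta)\beta=0$ in $\mathfrak{h}^*$ does not force $\sum_\beta \varphi(v_\beta)=0$. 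Your suggested remedies (arranging $\varphi$ to be $\mathrm{ad}\,\mathfrak{h}$-homogeneous, or a limiting argument along a one-parameter subgroup generated by a regular $H_0$) are natural, but neither is carried out, and the paper explicitly notes that the height-one block structure that makes such control possible degenerates once nontrivial transitivity appears. In short, your proposal accurately isolates where the difficulty lies but does not resolve it; this matches the paper's own assessment that the conjecture remains open.
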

\noindent Upon obtaining a proof of Conjecture~\ref{conj:contqr}, the characterization of contact Lie poset algebras could be acquired by investigating quasi-reductive Lie poset algebras, which, to the authors' knowledge, have not yet been identified. Furthermore, such a proof, in tandem with Theorem 5 of \textbf{\cite{swclass}}, would suggest a more general phenomenon occurring within the family of ``Lie proset algebras" (see \textbf{\cite{swclass}}).




\end{document}